\documentclass[11pt]{amsart}
\usepackage{tikz-cd}
\usepackage{verbatim}
\usepackage[colorinlistoftodos]{todonotes}
\usepackage{mathtools}
\usepackage[bookmarks,colorlinks,breaklinks]{hyperref}  
\usepackage[top=2cm, bottom=4.5cm, left=3cm, right=3cm]{geometry}
\hypersetup{linkcolor=blue,citecolor=blue,filecolor=blue,urlcolor=blue} 
\setlength{\parskip}{0.25em}

%\usepackage[left=0.9in,top=0.9in,right=0.9in,bottom=0.9in]{geometry}
%\usepackage{amsaddr}

%% Packages

%\usepackage{amsfonts,amsmath,amsthm,amssymb,latexsym,mathrsfs,stmaryrd}
%\usepackage{hyperref}
%\usepackage{mathtools}
%\usepackage{graphicx}
%\usepackage{subcaption}

\usepackage{tikz}\usetikzlibrary{cd,fit,matrix,arrows,decorations.pathmorphing}
\tikzset{commutative diagrams/.cd}
\usepackage{bm}

%\usepackage[sort&compress]{natbib}

%% Theorems
\numberwithin{equation}{subsection}

\newtheorem{theorem}{Theorem}[section]
\newtheorem{corollary}[theorem]{Corollary}
\newtheorem{lemma}[theorem]{Lemma}
\newtheorem{proposition}[theorem]{Proposition}

\theoremstyle{definition}
\newtheorem{definition}[theorem]{Definition}
\newtheorem{definition-theorem}[theorem]{Definition-Theorem}

\newtheorem{claim}[theorem]{Claim}

\newtheorem{remark}[theorem]{Remark}

\theoremstyle{remark}

\newtheorem*{remark*}{Remark}

%% Global settings

\usepackage{enumitem}
\setenumerate[1]{label=\textup{(\alph*)}}
\setenumerate[2]{label=\textup{(\roman*)}}

%\tolerance=600

%% Common macros (Make sure you read)
\newcommand{\Aberk}[1]{\ensuremath{\bA_{\mathrm{Berk},#1}^1}}
\newcommand{\g}{\bfc}
\newcommand{\supp}{{\rm Supp}}
\newcommand{\cD}{\mathcal{D}}
\newcommand{\E}{{\mathbb E}}
\renewcommand{\d}{\delta}
\renewcommand{\l}{\lambda}
\newcommand{\gal}{{\rm Gal}}
\newcommand{\trdeg}{{\rm trdeg}}

\newcommand{\bfc}{{\mathbf c}}
\newcommand{\N}{{\mathbb N}}
\newcommand{\M}{{\mathbb M}}

\newcommand{\C}{{\mathbb C}}
\newcommand{\A}{{\mathbb A}}
\newcommand{\bA}{{\mathbb A}}
\newcommand{\R}{{\mathbb R}}
\newcommand{\Fp}{{\mathbb F}_p}

\newcommand{\Fpbar}{{\overline{\mathbb F}_p}}

\newcommand{\Lbar}{{\overline{L}}}

\newcommand{\hhat}{\widehat{h}} %\hat is too narrow; avoid
\newcommand{\lra}{\longrightarrow}

 %\map[f] gives an arrow with f
 %G \acts X

% Usage: use \abs{\dots} for |\dots|. Use \abs*{\dots} to automaticaly adjust the size. Use \abs[\bigg]{\dots} to manually adjust the size.

%\DeclarePairedDelimiter{\ev}{.}{\rvert} %Use \ev*{f(x)}_{x=0} to mean f(x)|_{x=0}

%Use \ev{f(x)}{x=0} to mean f(x)|_{x=0}

 %To avoid unwanted italic in theorem env
 % In theorem env, \tuparens*{text} produces (text) with upright () and italic text. 

%% Used for comments
 %\alert{something in red}

%%%%%% Temporary macros (add along the way)

\begin{document}
\title[Simultaneously preperiodic points]{Simultaneously preperiodic points for a family of polynomials in positive characteristic}

\author{Dragos Ghioca}
\address{Department of Mathematics \\ University of British Columbia \\ 1984 Mathematics Road \\ Canada V6T 1Z2}
\email{dghioca@math.ubc.ca}

%\date{\today}

%\bibliographystyle{abbrv}

\begin{abstract}
In the goundbreaking paper \cite{Matt} (which opened a wide avenue of research regarding unlikely intersections in arithmetic dynamics), Baker and DeMarco prove that for the family of polynomials $f_\lambda(x):=x^d+\lambda$ (parameterized by $\lambda\in\mathbb{C}$), given two starting points $a$ and $b$ in $\mathbb{C}$, if there exist infinitely many $\lambda\in\mathbb{C}$ such that both $a$ and $b$ are preperiodic under the action of $f_\lambda$, then $a^d=b^d$. In this paper we study the same question, this time working in a field of characteristic $p>0$. The answer in positive characteristic is more nuanced, as there are three distinct cases: (i) both starting points $a$ and $b$ live in $\Fpbar$; (ii) $d$ is a power of $p$; and (iii) not both $a$ and $b$ live in $\Fpbar$, while $d$ is not a power of $p$. Only in case~(iii), one derives the same conclusion as in characteristic $0$, i.e., that $a^d=b^d$. In case~(i), one has that for each $\lambda\in\Fpbar$, both $a$ and $b$ are preperiodic under the action of $f_\lambda$, while in case~(ii), one obtains that \emph{also} whenever $a-b\in\Fpbar$, then for each parameter $\lambda$, we have that $a$ is preperiodic under the action of $f_\lambda$ if and only if $b$ is preperiodic under the action of $f_\lambda$.    
\end{abstract}

\maketitle

\section{Introduction}
\label{sec:intro}

We start by setting up some basic notation for our paper in Subsection~\ref{subsec:notation}.

%%%%%%%%%%%%%%%%%%%%%%%%%%%%%%%%%%%%%%%%%%%%%%%%%%%%%%%%%%%%%%%%%%%%%%%%%%%%%%

\subsection{Notation}
\label{subsec:notation}

Throughout this paper, given a self-map $f$ on some quasiprojective variety $X$, we denote by $f^n$ its $n$-th compositional power; by convention, $f^0$ represents the identity map ${\rm id}_X$ on $X$. A preperiodic point $x\in X$ for $f$ has the property that $f^m(x)=f^n(x)$ for some $0\le m<n$; if $m=0$ (i.e., $f^n(x)=x$), then the point $x$ is called periodic (under the action of $f$). 

%%%%%%%%%%%%%%%%%%%%%%%%%%%%%%%%%%%%%%%%%%%%%%%%%%%%%%%%%%%%%%%%%%%%%%%%%%%%%%

\subsection{Our results}
We prove the following main result.

\begin{theorem}
\label{thm:main}
Let $d\ge 2$ be an integer, let $L$ be a field of characteristic $p>0$ and let $\alpha,\beta\in L$. We let $\Lbar$ be a fixed algebraic closure of $L$, and we let $\Fpbar$ be the algebraic closure of $\Fp$ inside $\Lbar$. We consider the family of polynomials 
$$f_\lambda(x):=x^d+\lambda\text{ parameterized by $\lambda\in \Lbar$.}$$ 
Then there exist infinitely many $\lambda\in \Lbar$ such that both $\alpha$ and $\beta$ are preperiodic under the action of $f_\lambda$ if and only if at least one of the following statements holds:
\begin{itemize}
\item[(1)] $\alpha,\beta\in\Fpbar \cap L$.
\item[(2)] $d=p^\ell$ for some positive integer $\ell$ and $\beta-\alpha\in\Fpbar\cap L$.
\item[(3)] $\alpha^d=\beta^d$.
\end{itemize}
Moreover, if either one of the conditions~(1)-(3) holds, then for each $\lambda\in\Lbar$, we have that $\alpha$ is preperiodic under the action of $f_\lambda$ if and only if $\beta$ is preperiodic under the action of $f_\lambda$. 
\end{theorem}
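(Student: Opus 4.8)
The plan is to prove both directions of the equivalence and then the "moreover" clause, treating the three cases largely in parallel via the behavior of the canonical height. Throughout, fix an embedding of everything into $\Lbar$ and work with the family $f_\lambda(x) = x^d + \lambda$. For a point $x$ and parameter $\lambda$, write $\hhat_{f_\lambda}(x)$ for the canonical height of $x$ relative to $f_\lambda$ (over a suitable finitely generated subfield of $\Lbar$, with respect to an appropriate set of places); the point $\alpha$ is preperiodic under $f_\lambda$ if and only if $\hhat_{f_\lambda}(\alpha) = 0$, at least in the "generic" cases where $\lambda$ is transcendental over $\Fp$ or over $\Q$ as appropriate. The first task is to understand the parameter-space measures/heights: following Baker--DeMarco, one attaches to each starting point $\alpha$ a height function $h_\alpha$ on the $\lambda$-line (the "generalized adelic metrized line bundle" or its positive-characteristic analogue), and the set of $\lambda$ with $\alpha$ preperiodic is the zero set of $h_\alpha$. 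An equidistribution argument then shows that if there are infinitely many common $\lambda$, the two measures $\mu_\alpha$ and $\mu_\beta$ coincide at every place, and one must extract arithmetic consequences from $\mu_\alpha = \mu_\beta$.

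For the "hard" (forward) direction — infinitely many common $\lambda$ implies (1), (2), or (3) — I would argue as follows. First dispose of the case where $\alpha,\beta$ are both algebraic over $\Fp$: then we are in case (1) and there is nothing to prove, since over $\Fpbar$ every point has bounded orbit hence is preperiodic. So assume not both lie in $\Fpbar$; by symmetry say $\alpha \notin \Fpbar$. Pick a place $v$ of a finitely generated field of definition at which $\alpha$ has negative "valuation" / is close to $\infty$, or more precisely a place that detects the non-torsion nature of $\alpha$. The local equality of measures $\mu_{\alpha,v} = \mu_{\beta,v}$ on the Berkeley/Berkovich $\lambda$-line forces a relation between the escape-rate functions $\lambda \mapsto G_v(\alpha,\lambda)$ and $\lambda \mapsto G_v(\beta,\lambda)$ (the local canonical heights as functions of $\lambda$); comparing their behavior near $\lambda = \infty$, where $G_v(\alpha,\lambda) \sim \tfrac{1}{d}\log|\lambda|_v$ and the correction terms encode $\alpha$, one reads off that $\alpha^d = \beta^d$ unless a characteristic-$p$ degeneration intervenes. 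The degeneration is exactly case (2): when $d = p^\ell$, the map $x \mapsto x^d$ is additive (Frobenius-linear), so $f_\lambda(x+c) = f_\lambda(x) + c^d + (\lambda \text{ shifts})$ and more relevantly $f_\lambda(\alpha) - f_\lambda(\beta) = \alpha^d - \beta^d = (\alpha-\beta)^d$, so if $\alpha - \beta \in \Fpbar$ then $\alpha$ and $\beta$ have "parallel" orbits differing by an $\Fpbar$-point, which is preperiodic-preserving. The main obstacle is precisely making this dichotomy rigorous: showing that the only way $\mu_{\alpha,v} = \mu_{\beta,v}$ can hold at all places without forcing $\alpha^d = \beta^d$ is through the additivity available when $d$ is a power of $p$, i.e., ruling out any other "accidental" coincidence of measures. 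This requires a careful local analysis at each place — archimedean (if $L$ has characteristic... no, here all places are non-archimedean or function-field places) — and in particular a good understanding of the Berkovich dynamics of $f_\lambda$ as $\lambda$ varies, tracking how the Green's function depends on the starting point, and exploiting that in characteristic $p$ the expansion of $G_v(\alpha,\lambda)$ in $1/\lambda$ has a "Frobenius-twisted" structure. I expect this is where most of the work lies, and where the positive-characteristic subtleties (wild ramification, inseparability of $x \mapsto x^d$ when $p \mid d$ but $d$ is not a pure power of $p$) must be handled with care.

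For the reverse direction, assume (1), (2), or (3) holds and produce infinitely many common $\lambda$. In case (3), $\alpha^d = \beta^d$ gives $f_\lambda(\alpha) = \alpha^d + \lambda = \beta^d + \lambda = f_\lambda(\beta)$ for every $\lambda$, so the forward orbits of $\alpha$ and $\beta$ agree from the first step on; hence $\alpha$ is preperiodic iff $\beta$ is, for every $\lambda$, and since (e.g.) there are infinitely many $\lambda$ making $\alpha$ preperiodic — take $\lambda$ in the appropriate "Mandelbrot-type" locus, or simply note that for $\lambda \in \Fpbar$ every point of $\Fpbar$ is preperiodic and one can also realize other values — we get infinitely many common parameters. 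In case (1), for every $\lambda \in \Fpbar$ both $\alpha$ and $\beta$ lie in $\Fpbar$ and $f_\lambda$ preserves the finite-ish set $\Fpbar$... more precisely every element of $\Fpbar$ lies in a finite field fixed by $f_\lambda$ together with $\lambda$, so every orbit is finite, hence preperiodic; this gives infinitely many (in fact all) $\lambda \in \Fpbar$ as common parameters. In case (2), $d = p^\ell$ and $\gamma := \beta - \alpha \in \Fpbar$; then for any $\lambda$, by additivity of $x \mapsto x^{p^\ell}$ one computes $f_\lambda^n(\beta) - f_\lambda^n(\alpha) = \gamma^{d^n}$, which takes values in the finite field $\Fp(\gamma)$, so $f_\lambda^n(\beta)$ is determined by $f_\lambda^n(\alpha)$ up to a finite ambiguity; chasing this shows $f_\lambda^n(\alpha)$ eventually periodic implies $f_\lambda^n(\beta)$ eventually periodic and conversely, so $\alpha$ preperiodic $\iff$ $\beta$ preperiodic for every $\lambda$, and taking any of the infinitely many $\lambda$ with $\alpha$ preperiodic (which exist by the characteristic-$p$ analogue of the Mandelbrot locus being infinite, or by specialization) finishes the case. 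In all three cases the same computation that produces infinitely many common $\lambda$ simultaneously establishes the "moreover" statement — that $\alpha$ is preperiodic iff $\beta$ is for \emph{every} $\lambda \in \Lbar$, not just for the special ones — so the moreover clause requires no extra argument beyond the reverse direction, which is the easy part; the genuine content is entirely in the forward direction and its place-by-place Berkovich analysis.
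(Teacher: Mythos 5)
Your overall architecture (equidistribution to identify the measures/Green's functions at every place, then a local analysis to extract the relation between $\alpha$ and $\beta$, plus the easy case-by-case computations for the converse and the ``moreover'' clause) matches the paper's, but the heart of the argument is missing and the mechanism you propose for it would fail. You say you would compare the expansions of $\lambda\mapsto G_v(\gamma,\lambda)$ near $\lambda=\infty$, ``where the correction terms encode $\alpha$.'' In the nonarchimedean setting there are no correction terms: by the ultrametric inequality (Lemma~\ref{lem:easy}~(iii)), as soon as $|\lambda|_v>\max\{1,|\gamma|_v^d\}$ one has $G_{\gamma,v}(\lambda)=\log|\lambda|_v$ \emph{exactly}, independently of $\gamma$, so the behavior at infinity carries no information whatsoever about the starting point. (This is precisely why the complex-analytic step of Baker--DeMarco, which does exploit the expansion at infinity, is unavailable here.) The paper instead evaluates the identity $\hhat_{v,\lambda}(\alpha)=\hhat_{v,\lambda}(\beta)$ at carefully chosen \emph{finite} parameters --- $\lambda=0$, $\lambda=-\alpha^d$, $\lambda=\gamma_1\alpha-\alpha^d$ with $\gamma_1$ a suitable root of unity, and $\lambda=\alpha-\alpha^d$ (which fixes $\alpha$) --- and runs a valuation analysis at the finitely many places where $\alpha$ or $\beta$ is non-integral, proving $|\alpha^d-\beta^d|_v\le 1$ (strict for $d>2$) there and concluding by the product formula; this is where the hypothesis that $d$ is not a power of $p$ enters, and where a separate twist is needed for $d=2$. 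None of this is present or replaceable by your sketch. Relatedly, in the forward direction you never show that when $d=p^\ell$ one must have $\alpha-\beta\in\Fpbar$ (your Frobenius computation only explains why case~(2) is sufficient); the paper derives this from a single common preperiodic parameter via $f^n_\lambda(\alpha)-f^n_\lambda(\beta)=(\alpha-\beta)^{p^{n\ell}}$ taking finitely many values.

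There is also a gap in the direction you call easy: in cases (2) and (3) with $\alpha\notin\Fpbar$ you need infinitely many $\lambda\in\Lbar$ making $\alpha$ preperiodic, and you assert this ``by the characteristic-$p$ analogue of the Mandelbrot locus being infinite, or by specialization.'' That statement is a theorem requiring proof in characteristic $p$ (the complex-dynamical argument of DeMarco does not transfer); the paper proves it as Proposition~\ref{prop:infinitely} via an explicit argument showing that the parameters of prime period cannot all collapse to the fixed-point parameter $\lambda=\gamma-\gamma^d$ unless $\gamma\in\Fpbar$. Finally, your reduction ``over a suitable finitely generated subfield with an appropriate set of places'' glosses the step (Proposition~\ref{prop:trdeg}) that a single common preperiodic parameter forces $\trdeg_{\Fp}\Fp(\alpha,\beta)\le 1$, which is what legitimizes working over a product formula field so that the equidistribution theorem and the notion of canonical height apply at all.
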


In the next Subsection, we discuss the significance of our result in the more general context of unlikely intersections.

%%%%%%%%%%%%%%%%%%%%%%%%%%%%%%%%%%%%%%%%%%%%%%%%%%%%%%%%%%%%%%%%%%%%%%%%%%%%%%%

\subsection{The principle of unlikely intersections}
\label{subsec:history}

At its heart, this principle asserts that inside an ambient algebraic variety $X$ (defined over some algebraically closed field $K$), a subvariety $V$ has a Zariski dense intersection  
\begin{itemize}
\item[(A)] either with an infinite family of subvarieties $Y_n$ of dimensions less than the codimension of $V$ in $X$, all sharing a certain geometric property $\mathcal{P}_1$;
\item[(B)] or with an infinite subset $\Gamma$ of $X(K)$ which satisfies a certain algebraic property $\mathcal{P}_2$,
\end{itemize}
only if $V$ itself has a special geometric property among the subvarieties of $X$. Some of the most outstanding questions in Diophantine geometry can be formulated in this key; we give below a couple of examples, each time working inside a semiabelian variety $X$ defined over an algebraically closed field $K$ of characteristic $0$:
\begin{itemize}
\item[(A)] when $Y_n$ are torsion translates of algebraic subgroups of $X$ of dimension less than the codimension of $V$, we recover the famous conjectures made by Bombieri-Masser-Zannier and Pink-Zilber; for a comprehensive discussion we refer the reader to the beautiul book of Zannier \cite{Umberto}. 
\item[(B)] when $\Gamma$ is a finite rank subgroup of $X(K)$, we recover the former classical conjectures of Manin-Mumford and of Mordell-Lang  (see \cite{Laurent, McQ, Vojta}).
\end{itemize}
Masser and Zannier \cite{M-Z-1, M-Z-2} opened a great new avenue of research by proving yet another striking result regarding unlikely intersections in an algebraic family of elliptic curves. Inspired by the approach from \cite{M-Z-1} (which also has a dynamical reformulation), Baker and DeMarco \cite{Matt} proved a first outstanding result for unlikely intersections in a purely dynamical context. So, given an integer $d\ge 2$ and given complex numbers $a$ and $b$, Baker-DeMarco \cite{Matt} prove that if there exist infinitely many $\lambda\in\C$ such that both $a$ and $b$ are preperiodic under the action of $f_\l(x)=x^d+\l$, then $a^d=b^d$. In other words, the infinite occurence of the  unlikely event that both $a$ and $b$ are preperiodic points for the same polynomial $f_\l$ can only happen if both $a$ and $b$ have the same iterates under the entire family of maps $\{f_\l\}_{\l\in\C}$; so, a very rigid global condition is derived from the existence of infinitely many discrete unlikely events. The result of \cite{Matt} had several extensions; we mention only few of the known results:
\begin{itemize}
\item for more general families of polynomials and starting points (see \cite{GHT-ANT, Matt-2, G-Y, F-G}), including families of polynomials parameterized by points in a higher dimensional space (see \cite{GHT-London, GHT-IMRN, GHN});
\item for certain families of rational maps (see \cite{Laura, GHT-London}), and also for arbitrary families of Latt\'es maps (see \cite{D-M}). 
\end{itemize}
Each time, the proof of any of the above results had two distinct parts:
\begin{itemize}
\item[(I)] first, one proves that a certain equidistribution theorem for points of small height holds for the given dynamical system, which leads to knowing that certain  canonical heights (suitably normalized) computed for the two starting points with respect to our family of maps are equal; 
\item[(II)] then using the equality of the above canonical heights, one derives the precise relation between the two starting points.
\end{itemize}

Now, the key ingredient for establishing part~(I) above comes from any of the equidistribution theorems of Baker-Rumely \cite{BR-2}, Chambert-Loir \cite{CL}, Favre-Rivera-Letelier \cite{FRL} or Yuan \cite{Yuan}. Verifying the hypotheses of the aforementioned equidistribution theorems is the difficult part and requires a detailed analysis of the arithmetical properties of the given dynamical system. Usually, completing step~(II) above is easier and it generally relies on two ingredients: a complex dynamics argument (which in turn uses crucially some key features of complex analytic functions, such as the Open Mapping Theorem), along with the  refined characterization provided by Medvedev and Scanlon \cite{Alice-Tom} of the subvarieties of $\A^N$, which are invariant under the coordinatewise action of $N$ one-variable polynomials.

All of the above results hold over fields of characteristic $0$, essentially because in positive characteristic one lacks completely the tools for dealing with the aforementioned step~(II). In the present paper, we obtain a first complete answer to an unlikely intersection problem for a \emph{dynamical system} in characteristic $p$.

%%%%%%%%%%%%%%%%%%%%%%%%%%%%%%%%%%%%%%%%%%%%%%%%%%%%%%%%%%%%%%%%%%%%%%%%%%%%%%

\subsection{The picture in positive characteristic}
\label{subsec:his_p}

Overall, there are only a handful of results for the unlikely intersection principle in characteristic $p$. These known results are valid for Drinfeld modules (see \cite{B-Masser, B-Masser-2, GH-Acta, G-JNT}) since the Drinfeld modules are the natural vehicle in positive characteristic for many of the classical questions in arithmetic geometry, such as the Andr\'e-Oort conjecture (see \cite{Breuer}), the Bogomolov conjecture (see \cite{Bosser}),  the Mordell-Lang conjecture (see \cite{G3, GT-Compo}), the Manin-Mumford conjecture (see \cite{Scanlon-0}), the Siegel's theorem (see \cite{GT-Math.Ann}). Generally, if one tries to prove results in characteristic $p$ beyond the world of Drinfeld modules, then one encounters significant difficulties, especially in a purely dynamical setting.

In Theorem~\ref{thm:main} we establish the counterpart of the main result of \cite{Matt} in positive characteristic. The three different possibilities~(1)-(3) from Theorem~\ref{thm:main} show the distinct scenarios one has to deal with when working arithmetic questions in characteristic $p$; so, we have a trichotomy:
\begin{itemize}
\item[(1)] the case when the starting points $\alpha$ and $\beta$ live in $\Fpbar$ amounts to the so-called \emph{isotrivial} case that is always special in positive characteristic. 
\item[(2)] the case when $d=p^\ell$ is a power of the characteristic is also special since then each polynomial $f_\l(x)=x^d+\l$ from our family is an \emph{affine} map on $\mathbb{G}_a$, i.e., it is a  composition of an additive polynomial $x\mapsto x^{p^\ell}$ with a translate $x\mapsto x+\l$.
\item[(3)] in the \emph{generic} case, i.e., in the absence of cases~(1)-(2) above, then indeed the only possibility for $\alpha$ and $\beta$ to admit infinitely many parameters $\l$ such that both starting points are preperiodic under the action of $f_\l$ is when $\alpha^d=\beta^d$ (same as in characteristic $0$).
\end{itemize}

The fact that the isotrivial case in characteristic $p$ must be treated separately was a feature in many of the classical Diophantine problems, such as the Mordell-Lang conjecture (see \cite{Rahim-Tom}), or the Bogomolov problem (see \cite{G}). Furthermore, as discussed in \cite{Tom}, isotriviality plays an important role also for dynamical questions in characteristic $p$. So, it is natural that we have to consider separately the case~(1) in Theorem~\ref{thm:main}.
\begin{remark}
\label{rem:dynamical}
The case~(2) above appears due to the fact that when $d=p^\ell$, our family of polynomials $f_\l=x^{p^\ell}+\l$ commutes with additional polynomials (besides the identity map). In fact, given any translate $T_\xi(x)=x+\xi$ for some $\xi\in\Fpbar$, then $T_\xi$ commutes with $f^m_\l$, where $m$ is a positive integer so that $\xi\in \mathbb{F}_p^{\ell m}$. 
\end{remark}
Finally, we note that  a somewhat similar trichotomy as in the above cases~(1)-(3) appeared in \cite{GS-Cambridge} in the context of the Zariski Dense Orbit Conjecture studied once again over fields of positive characteristic.

%%%%%%%%%%%%%%%%%%%%%%%%%%%%%%%%%%%%%%%%%%%%%%%%%%%%%%%%%%%%%%%%%%%%%%%%%%%%%%

\subsection{The strategy for our proof}
\label{subsec:explain}

We also prove (see Theorem~\ref{thm:main_2}) a generalization of Theorem~\ref{thm:main} by replacing the hypothesis that there exist infinitely many parameters $\lambda$ for which both $\alpha$ and $\beta$ are preperiodic under the action of $f_\l$ with the weaker hypothesis that (in a suitable product formula field $L$) there exist infinitely many parameters $\l_n$ such that 
\begin{equation}
\label{eq:1111}
\lim_{n\to\infty}\hhat_{f_{\l_n}}(\alpha)=\hhat_{f_{\l_n}}(\beta)=0;
\end{equation}
for more details regarding the global canonical heights $\hhat_{f_\l}$, see Subsection~\ref{subsec:global_height}. The fact that we can reduce in our Theorem~\ref{thm:main} to the case $L$ is a  product formula field is explained in Subsection~\ref{subsec:trdeg} (especially, see  Proposition~\ref{prop:trdeg}). Also, as noted in Remark~\ref{rem:clearly}, once $\alpha$ (or $\beta$) is preperiodic under the action of $f_\l$, then its global canonical height (with respect to $f_\l$) equals $0$; hence, the condition~\eqref{eq:1111} is weaker than the hypothesis from Theorem~\ref{thm:main}.

Similar to the proof of Baker-DeMarco \cite{Matt}, the first move is proving that the equidistribution theorem from \cite{BR} holds, which allows us to conclude that certain local canonical heights constructed with respect to the two starting points $\alpha$ and $\beta$ are equal (for more details, see Section~\ref{sec:heights} and also, see Theorem~\ref{thm:iff}). This part is essentially identical to the same analysis done in \cite{Matt}, and it refers to the so-called part~(I) (see our Subsection~\ref{subsec:history}) of any attempt to proving an unlikley intersection result in arithmetic dynamics. However, in order to get the precise relation between $\alpha$ and $\beta$ (the so-called part~(II), as explained in Subsection~\ref{subsec:history}), since the argument using complex analysis is no longer available in characteristic $p$, we need to employ a very detailed local analysis (at each nonarchimedean place) in order to derive the desired conclusion from Theorem~\ref{thm:main}.

\begin{remark}
\label{rem:Drinfeld}
The fact that the equidistribution theorem for points of small height can be applied to dynamical systems in positive characteristic was previously observed (see \cite{GH-Acta, G-JNT}). However, each time in the past it was prefered to work with a family of Drinfeld modules specifically to avoid the typical difficulties one encounters when dealing with Diophantine problems in positive characteristic (see cases~(1)-(2) in Theorem~\ref{thm:main}, also detailed in Subsection~\ref{subsec:his_p}). In particular, as conjectured in \cite{GH-Acta, G-JNT}, for families of Drinfeld modules one expects the same conclusion in terms of relations between the starting points, exactly as in the case of families of polynomials in characteristic $0$; in other words, the dynamics of a family of Drinfeld modules is almost oblivion of the fact that one works in characteristic $p$, as cases~(1)-(2) from Theorem~\ref{thm:main} do not appear in that setting.
\end{remark}

Now, going back to the plan of our paper, in order to state the equidistribution theorem that we will employ in our proof (see Theorem~\ref{thm:equi}), we need a technical setup both from the theory of Berkovich spaces and also from arithmetic dynamics; this is done in Section~\ref{sec:equi}. We continue by introducing canonical heights (both local and global) associated to our family of polynomials; this is done in Section~\ref{sec:heights}. Our results from Section~\ref{sec:heights} provide the technical background for obtaining the crucial Theorem~\ref{thm:iff} in Section~\ref{sec:equality}. Theorem~\ref{thm:iff} says that the existence of an infinite sequence of parameters $\l_n$ satisfying equation~\eqref{eq:1111} yields that for \emph{each} parameter $\l$ and for each nonarchimedean place $v$ of $L$, we have
\begin{equation}
\label{eq:equality_l}
\hhat_{v,\l}(\alpha)=\hhat_{v,\l}(\beta);
\end{equation}
for the precise definition of the local canonical heights $\hhat_{v,\l}$, we refer the reader to Section~\ref{sec:heights}. 

In Section~\ref{sec:proof} we prove Proposition~\ref{prop:main}, which says that assuming equation~\eqref{eq:equality_l} holds (for each place $v$ and each parameter $\l$), and also assuming that $d$ is a not a power of $p$ and that not both $\alpha$ and $\beta$ live in $\Fpbar$, then condition~(3) from Theorem~\ref{thm:main} must hold. Its proof requires a refined analysis of the valuations for $\alpha^d-\beta^d$, obtained by employing equation~\eqref{eq:equality_l} for suitably chosen parameters $\l$. A similar  strategy was previously used for obtaining the precise relation between starting points for other questions regarding the unlikely intersection principle in arithmetic dynamics (see \cite{GHT-IMRN, GHN} in characteristic $0$, and also, see \cite{GH-Acta, G-JNT} in characteristic $p$). However, each time, the technical arguments are different because one has to take into account the special features of the given dynamical system. As an aside, we note that our argument can be employed verbatim also for the original problem studied in \cite{Matt} (assuming, say, the starting points $a$ and $b$ live in a number field); however, the conclusion one obtains from the study of nonarchimedean absolute values is only that $a^d-b^d$ must be an algebraic integer.
 
Theorem~\ref{thm:iff} coupled with Proposition~\ref{prop:main}  prove the direct implication (which is the much harder part) from the conclusion of Theorem~\ref{thm:main}. Finally, in Section~\ref{sec:proofs}, we conclude our proof of Theorem~\ref{thm:main}. We actually state and prove the more general Theorem~\ref{thm:main_2} and show first how to deduce Theorem~\ref{thm:main} as a consequence of Theorem~\ref{thm:main_2}. The main part of Section~\ref{sec:proofs} is devoted to proving  Theorem~\ref{thm:main_2}; once again, the key ingredient is our Proposition~\ref{prop:main}.

%%%%%%%%%%%%%%%%%%%%%%%%%%%%%%%%%%%%%%%%%%%%%%%%%%%%%%%%%%%%%%%%%%%%%%%%%%%%%%%
%%%%%%%%%%%%%%%%%%%%%%%%%%%%%%%%%%%%%%%%%%%%%%%%%%%%%%%%%%%%%%%%%%%%%%%%%%%%%%%

\section{Equidistribution for points of small height}
\label{sec:equi}

As mentioned in Section~\ref{sec:intro}, we will need to apply the arithmetic
equidistribution discovered independently by Baker-Rumely \cite{BR-2},
Chambert-Loir \cite{CL} and Favre-Rivera-Letelier \cite{FRL}; when the base field is a
nonarchimedean field, the equidistribution theorem is best stated over
the Berkovich space associated to the underlying variety in question. 
We will introduce briefly the desired equidistribution theorem for points of small height (see Theorem~\ref{thm:equi}); for a comprehensive introduction to 
Berkovich spaces, we refer the reader to \cite{BR}. In our presentation, we use the approach of Baker-Rumely, which connects the equidistribution theorem to the
theory of arithmetic capacities. Hence, the material presented in this
Section~\ref{sec:equi} is mainly from the book~\cite{BR} by Baker and
Rumely.

So, following \cite[Definition~7.51]{BR}, we let $L$ be a field of characteristic $p$ endowed with a product
formula, i.e., there exists a set $\Omega_{L}$ of (pairwise inequivalent) absolute
values satisfying the following conditions:
\begin{itemize}
\item[(i)] for each nonzero $x\in L$, we have $|x|_v=1$ for all but finitely many $v\in\Omega_L$; and
\item[(ii)] for each nonzero $x\in L$, we have 
\begin{equation}
\label{eq:product_formula}
\prod_{v\in\Omega_L}|x|_v=1.
\end{equation}
\end{itemize}
We note that usually, one asks that the product formula~\eqref{eq:product_formula} holds in a slighly more general form: $\prod_{v\in\Omega_L}|x|_v^{N_v}=1$ for some given positive integers $N_v$; however, since all the absolute values from $\Omega_L$ are nonarchimedean, we can absorb the exponents $N_v$ in the definition of the respective absolute values $|\cdot |_v$ (see also \cite[Equation~2.2]{GH-Acta}). Furthermore, as mentioned in \cite[Chapter~7]{BR}, one does not require $L$ to be a global (function) field, but rather one needs that $L$ is a general product formula field (see equations~(i)-(ii) above). In particular, we can let $L_0$ be  the perfect closure of the rational function field (in one variable) over $\Fpbar$, i.e., 
\begin{equation}
\label{eq:L}
L_0:=\Fpbar\left(t, t^{1/p}, t^{1/p^2},\cdots, t^{1/p^n},\cdots\right)
\end{equation}
and then take $L$ be any finite extension of $L_0$; then $L$ is a product formula field. Indeed, each place of $\Fpbar(t)$ (which geomerically, corresponds   to a point of $\mathbb{P}^1(\Fpbar)$) extends uniquely to a place $w$ of $L_0$, thus making $L_0$ a product formula field. Above each given place $w$ of $L_0$ there exist finitely many places $v$ of $L$; we denote by $\Omega:=\Omega_L$ this set of places of $L$. Then $L$ is a product formula field with respect to $\Omega$. Furthermore, the separable closure $L^{\rm sep}$ of $L$ coincides with its algebraic closure $\Lbar$ (see also \cite[Remark~1.1]{GS-JNT}). Finally, we have the following fact: only the points in $\Fpbar$ are the points $x\in L$ which are integral at each place in $\Omega$, i.e., 
\begin{equation}
\label{eq:finite_field}
\text{if }|x|_v\le 1\text{ for each }v\in\Omega\text{, then }x\in\Fpbar.
\end{equation}

In the rest of this Section, we work with an arbitrary product formula field $L$; however, the relevant case for our results is a finite extension of the field from~\eqref{eq:L}. 
Now, for each $v\in\Omega_L$, we let $\C_v$ be an algebraically closed field containing $L$, which is also complete with respect to a fixed extension of $|\cdot |_v$ to $\C_v$.    
Let $\Aberk{\C_v}$ denote the Berkovich affine  line over
$\C_v$ (see \cite{BR} or  \cite[Section~2]{Matt} for more 
details).  In order to apply the main equidistribution result from \cite[Theorem
7.52]{BR}, we recall briefly the potential theory on the 
affine line over $\C_v$.   The right setting for nonarchimedean potential theory is the
potential theory on $\Aberk{\C_v}$ developed in \cite{BR}. We
quote here part of a nice summary of the theory 
from \cite[Section~2]{Matt} without going into details (we
refer the reader to \cite{BR, Matt}  for all the
details and proofs). 

So, let $E$ be a compact subset of
$\Aberk{\C_v}$. Then analogous to the complex case, the logarithmic
capacity $\g(E) = e^{-V(E)}$ and the Green's function $G_E$ of $E$
relative to $\infty$ can be defined where $V(E)$ is the infimum of the 
\emph{energy integral} with respect to all possible probability
measures supported on $E$. More precisely,
$$V(E)=\inf_{\mu}\int\int_{E\times E} -\log\delta(x,y) d\mu(x)d\mu(y),$$
where the infimum is computed with respect to all probability measures $\mu$ supported on $E$, while for $x, y\in \Aberk{\C_v},$  the function $\delta(x,y)$ 
is the \emph{Hsia kernel} (see \cite[Proposition~4.1]{BR}).
%$$\delta(x,y):=\limsup_{\substack{z,w\in \C_v\\ z\to x, w\to y}}\mid z-w\mid _v .$$ 
If $\g(E) > 0$, then the exists a unique probability measure $\mu_E$, also called the \emph{equilibrum measure on $E$}, 
attaining the infimum of the energy integral. Furthermore, the support of
$\mu_E$ is contained in the boundary of the unbounded component of
$\Aberk{\C_v}\setminus E$.  The Green's function 
$G_E(z)$ of $E$ relative to infinity is a well-defined nonnegative
real-valued subharmonic function on $\Aberk{\C_v}$ which is harmonic on $\Aberk{\C_v}\setminus E$ (in the sense of
\cite[Chapter~8]{BR}). 

The following result (see \cite[Lemma 2.5]{Matt}) summarizes the key features of the Green's function.
\begin{lemma}
  \label{green function}
Let $E$ be a compact subset of $\Aberk{\C_v}$ and let $U$ be the unbounded component of
$\Aberk{\C_v}\setminus E$. 

\begin{itemize}
\item[(1)]
If $\g(E)>0$ (i.e. $V(E)<\infty$), then
  $G_E(z) = V(E) + \log |z|_v $ for all $z\in \Aberk{\C_v}$ such that
  $|z|_v$ is sufficiently large.

\item[(2)]
If $G_E(z) = 0$ for all $z\in E,$ then $G_E$ is continuous on
  $\Aberk{\C_v},$ $\supp(\mu_E) = \partial U$ and $G_E(z) > 0$ if and
  only if $z\in U.$  

\item[(3)]
  If $G :\Aberk{\C_v}\to \R$ is a continuous subharmonic function which
  is harmonic on $U,$ identically zero on $E$, and such that $G(z) -
  \log^+|z|_v$ is bounded, then $G = G_E$. Furthermore, if $G(z)=\log|z|_v + V + o(1)$ (as $|z|_v\to\infty$) for some $V<\infty$, then $V(E)=V$ and so, $\g(E)=e^{-V}$.
\end{itemize}
\end{lemma}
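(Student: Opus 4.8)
The plan is to obtain all three parts from the nonarchimedean potential theory developed in \cite{BR}; in each case the real work is to locate the correct statement there and verify its hypotheses, so what follows is an assembly sketch rather than a self-contained argument. For part~(1) I would use the description of $G_E$ through the equilibrium measure, $G_E(z)=V(E)-p_{\mu_E}(z)$, where $p_{\mu_E}(z)=\int_E-\log\delta(x,z)\,d\mu_E(x)$ is the equilibrium potential relative to $\infty$. The Hsia kernel has the standard property that $\delta(x,z)=|z|_v$ whenever $|z|_v>\sup_{x\in E}|x|_v$ (on type~I points it extends $|x-z|_v$, which equals $|z|_v$ there by the ultrametric inequality, and one then passes to general points). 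Since $\mu_E$ is a probability measure on $E$, for such $z$ one gets $p_{\mu_E}(z)=-\log|z|_v$, hence $G_E(z)=V(E)+\log|z|_v$.

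For part~(2) the one genuinely analytic input is the continuity of $G_E$: under the hypothesis $G_E|_E\equiv 0$, the Frostman identity $p_{\mu_E}\equiv V(E)$ holds at \emph{every} point of $E$ rather than merely quasi-everywhere, which removes the exceptional polar set where lower semicontinuity of $G_E$ could fail; combining this with the upper semicontinuity of the subharmonic function $G_E$ yields continuity on $\Aberk{\C_v}$ (this is what \cite{BR} provides). Granting continuity, the rest is soft: $G_E\ge 0$ and is harmonic on every component of $\Aberk{\C_v}\setminus E$; on $U$ it tends to $+\infty$ at $\infty$ by part~(1) (which applies since $\g(E)>0$ in this case), so the minimum principle forces $G_E>0$ throughout $U$, while on any bounded component $W$ the maximum principle gives $0\le G_E\le\max_{\partial W}G_E=0$, so $G_E\equiv 0$ on $W$; with $G_E|_E\equiv 0$ this gives $\{G_E>0\}=U$. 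Finally $\supp\mu_E\subseteq\partial U$ is already known; conversely, if some $z_0\in\partial U\subseteq E$ had a neighborhood disjoint from $\supp\mu_E$, then $G_E$ would be harmonic and nonnegative there with $G_E(z_0)=0$, hence identically $0$ near $z_0$ by the minimum principle, contradicting $G_E>0$ on the part of that neighborhood lying in $U$; therefore $\supp\mu_E=\partial U$.

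For part~(3) I would first observe that the existence of such a $G$ forces $\g(E)>0$, so that $G_E$ exists and, by parts~(1)--(2), is continuous, vanishes on $E$, is harmonic on $U$, and equals $V(E)+\log|z|_v$ near $\infty$. Set $h:=G-G_E$ on $\overline U$: it is harmonic on $U$, continuous up to the boundary, identically $0$ on $\partial U\subseteq E$, and bounded near $\infty$ (both $G$ and $G_E$ are $\log|z|_v+O(1)$ there). The maximum principle on the Berkovich domain $U$ then gives $h\equiv 0$ on $U$, i.e.\ $G=G_E$ on $U$, and by continuity $G=G_E$ on all of $\Aberk{\C_v}$ since both vanish on $E$. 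For the refinement, once $G=G_E$, part~(1) gives $G(z)=V(E)+\log|z|_v$ for $|z|_v$ large, so comparison with the hypothesis $G(z)=\log|z|_v+V+o(1)$ forces $V=V(E)$ and hence $\g(E)=e^{-V}$.

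The main obstacle is not any one computation but the fact that this lemma repackages foundational results: the two steps carrying genuine content are the continuity assertion in~(2) --- the nonarchimedean analogue of a Frostman/Maria-type argument controlling the discontinuity of $G_E$ once it is known to vanish on all of $E$ --- and the precise maximum principle on the unbounded Berkovich component $U$ used in~(3), where one must handle the point $\infty\in U$ and the non-compactness of $U$. Both are available in \cite{BR} and are exactly what \cite{Matt} relies on, so in practice I would prove the lemma by a careful citation together with the hypothesis checks sketched above.
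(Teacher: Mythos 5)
Your proposal is correct and matches the paper's treatment: the paper gives no proof of this lemma at all, simply quoting it as \cite[Lemma 2.5]{Matt}, whose justification is exactly the potential theory from \cite{BR} that you assemble (equilibrium potential asymptotics for (1), regularity/continuity plus the strong maximum--minimum principles for (2), and the uniqueness-of-Green's-function argument with the capacity read off from the asymptotics for (3)). Your hypothesis checks are consistent with that machinery, so a careful citation along the lines you sketch is precisely what the paper intends.
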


To state the equidistribution result from \cite{BR}, we
consider the compact \emph{Berkovich ad\`elic sets} which are of the
following form
\begin{equation}
\label{eq:E}
\E := \prod_{v\in \Omega_L}\, E_v,
\end{equation}
where $E_v$ is a non-empty compact subset of $\Aberk{\C_v}$ for each
$v\in \Omega_L$ and furthermore, $E_v$ is the closed unit disk $\cD(0,1)$ in
$\Aberk{\C_v}$ for all but finitely many $v\in \Omega_L$. The
\emph{logarithmic capacity} $\g(\E)$ of $\E$ is defined as follows
\begin{equation}
\label{logarithmic capacity formula 0}
\g(\E) = \prod_{v\in \Omega_L}\,\g(E_v). 
\end{equation}
Note that in \eqref{logarithmic capacity formula 0} there is a finite product as for all but finitely many 
$v\in\Omega_L$, we have $\g(E_v) = \g(\cD(0,1)) = 1$.   Let $G_v := G_{E_v}$  be
the Green's function of $E_v$ 
relative to $\infty$ for each $v\in \Omega_L$. For every $v\in\Omega_L$, we
fix an embedding of the separable closure $L^{\rm sep}$ of $L$ into $\C_v$. Let 
$S\subset L^{\rm sep}$ be any finite subset that is invariant under the action
of the Galois group 
$\gal(L^{\rm sep}/L)$. We define the height $h_{\E}(S)$ of $S$ relative to $\E$
by
\begin{equation}
\label{def ade hig}
h_{\E}(S) = \sum_{v\in\Omega_L} \left(\frac{1}{|S|}\sum_{z\in S}G_v(z)\right).
\end{equation}
Note that this definition is independent of the particular embedding
$L^{\rm sep}$ into $\C_v$ that we choose at each place $v\in \Omega_L$. Finally, for each $v\in\Omega_L$, we let $\mu_v$ be the equilibrum measure on $E_v$. 
The following is a special case of the equidistribution result 
\cite[Theorem~7.52]{BR} that we need for our application.
\begin{theorem}
 \label{thm:equi}
With the above notation, let $\E = \prod_{v\in \Omega} E_v$ be a compact Berkovich ad\`elic set
 with $\g(\E)=1.$ Suppose that 
 $S_n$ is a sequence of $\gal(L^{\rm sep}/L)$-invariant finite subsets of
 $L^{\rm sep}$ with $|S_n|\to \infty$ and $h_{\E}(S_n) \to 0$ as $n\to
 \infty$.  
For each $v\in\Omega_L$ and for each $n$ let $\d_n$ be the discrete
 probability measure supported equally on the elements of $S_n$. Then the
 sequence of measures $\{\d_n\}$ converges weakly to $\mu_v$ the
 equilibrium measure on $E_v$. 
\end{theorem}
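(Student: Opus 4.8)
The plan is to obtain Theorem~\ref{thm:equi} as a direct specialization of the adelic equidistribution theorem \cite[Theorem~7.52]{BR} of Baker and Rumely (the nonarchimedean adelic form of the equidistribution theorems of \cite{BR-2, CL, FRL}); no genuinely new argument is required, and the only task is to check that the hypotheses and normalizations fixed in Section~\ref{sec:equi} are exactly those of \cite[Chapter~7]{BR}. Recall that \cite[Theorem~7.52]{BR} is stated over a general product formula field (as in \cite[Definition~7.51]{BR}) and yields precisely the conclusion we want: for a compact Berkovich adelic set $\E=\prod_v E_v$ with $\g(\E)=1$ and a sequence $S_n$ of $\gal(L^{\rm sep}/L)$-invariant finite subsets of $L^{\rm sep}$ with $|S_n|\to\infty$ and $h_{\E}(S_n)\to 0$, the discrete measures $\d_n$ attached to $S_n$ converge weakly, for each place $v$, to the equilibrium measure $\mu_v$ of $E_v$. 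In the general statement one also allows archimedean places, and the capacity-one hypothesis may be recorded either as $\g(\E)=1$ or, via $\g(\E)=e^{-V(\E)}$, as the vanishing of the Robin constant $V(\E)$; in our situation $L$ has characteristic $p$, so $\Omega_L$ consists only of nonarchimedean absolute values and these two formulations coincide verbatim. Since we have absorbed the multiplicities $N_v$ into the absolute values (see the discussion following \eqref{eq:product_formula}), the objects $\g(\E)$, $h_{\E}$, $\d_n$, $\mu_v$ and the notion of weak convergence are literally those of \cite{BR}, so Theorem~\ref{thm:equi} follows at once. The same specialization is already recorded in \cite[Section~2]{Matt}.

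If one wanted instead to reprove the statement from scratch, the route is the nonarchimedean Fekete--Szeg\H{o}/Bilu-type energy argument that underlies \cite{BR}. For each $v\in\Omega_L$ one estimates, relative to $E_v$, the energy of the difference $\d_n-\mu_v$ in terms of the self-energy of $\d_n$ and its cross term against $\mu_v$; the problematic diagonal self-pairing contributions of $\d_n$ are controlled using the $\gal(L^{\rm sep}/L)$-invariance of $S_n$ together with the product formula, so that, after summing over all places, the total energy discrepancy between the $\d_n$ and the $\mu_v$ --- a sum of nonnegative per-place terms --- is bounded by $h_{\E}(S_n)$ up to a vanishing error. Consequently the contribution at each individual place tends to $0$, and a standard compactness-plus-semicontinuity argument (using lower semicontinuity of the energy functional and the uniqueness of the equilibrium measure of $E_v$) then upgrades this to the desired weak convergence $\d_n\to\mu_v$.

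The part that is genuinely hard --- and which is exactly the content of \cite[Chapters~7--8]{BR} that we import as a black box --- is the foundational nonarchimedean potential theory that makes the energy step rigorous: the relevant kernels are only semicontinuous off the diagonal, the energy integral is delicate, and one needs the maximum principle, the domination (Maria) theorem, Frostman-type statements, and the precise structure of $\supp(\mu_{E_v})$ recalled in Lemma~\ref{green function}. For our purposes none of this has to be revisited; the proof of Theorem~\ref{thm:equi} ultimately reduces to the observation that its hypotheses are, by construction, precisely the hypotheses of \cite[Theorem~7.52]{BR}.
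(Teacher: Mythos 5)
Your proposal is correct and matches the paper's own treatment: the paper gives no independent proof, stating Theorem~\ref{thm:equi} as a special case of \cite[Theorem~7.52]{BR}, exactly the specialization you carry out (nonarchimedean product formula field, multiplicities absorbed into the absolute values). Your additional sketch of the underlying energy argument is a reasonable summary of what \cite{BR} does, but it is not needed and is not part of the paper's argument.
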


%%%%%%%%%%%%%%%%%%%%%%%%%%%%%%%%%%%%%%%%%%%%%%%%%%%%%%%%%%%%%%%%%%%%%%%%%%%%%%%
%%%%%%%%%%%%%%%%%%%%%%%%%%%%%%%%%%%%%%%%%%%%%%%%%%%%%%%%%%%%%%%%%%%%%%%%%%%%%%%

\section{Dynamics and heights associated to our family of polynomials}
\label{sec:heights}

Throughout this Section, we let $L_0$ be the perfect closure of $\Fpbar(t)$ (see its definition from~\eqref{eq:L}) and then  we let $L$ be a given finite extension of $L_0$. Then each finite extension of $L$ is separable, i.e., $L^{\rm sep}=\Lbar$; so, from now on, we fix an algebraic closure $\Lbar$ of $L$. Also, for the sake of simplifying our notation, we let $\Omega:=\Omega_L$ be the set of inequivalent places  of $L$ witnessing the fact that $L$ is a product formula field.

%%%%%%%%%%%%%%%%%%%%%%%%%%%%%%%%%%%%%%%%%%%%%%%%%%%%%%%%%%%%%%%%%%%%%%%%%%%%%%%

\subsection{Preperiodic parameters for a given starting point}
\label{subsec:prep}

We let $d\ge 2$ be an integer. 
We work
with a family of polynomials  
as given  in Theorem~\ref{thm:main}, i.e. $f_\lambda(x)=x^d+\lambda$ parameterized by $\lambda\in\Lbar$. Given $\gamma\in L$, we define 
\begin{equation}
\label{eq:P}
P_{n,\gamma}(\lambda):=f^n_\lambda(\gamma)\text{ for each 
$n\in\N$;} 
\end{equation}
then $P_{n,\gamma}(\lambda)$ is a polynomial in $\lambda$.   A simple induction on $n$ yields the following result.

\begin{lemma}
\label{lem:degree in l}
With the above hypothesis, for each $n\in\N$, the polynomial $P_{n,\gamma}(\l)$ is monic and has degree
$d^{n-1}$ in $\l$. 
\end{lemma}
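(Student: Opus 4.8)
The statement to prove is Lemma~\ref{lem:degree in l}: for each $n\in\N$, the polynomial $P_{n,\gamma}(\lambda)=f_\lambda^n(\gamma)$ is monic of degree $d^{n-1}$ in $\lambda$. I would argue by induction on $n$. The base cases are trivial to check directly: $P_{0,\gamma}(\lambda)=\gamma$ is a (nonzero) constant, which we interpret as having degree $d^{-1}$ only formally---so in fact the cleanest normalization is to start the induction at $n=1$, where $P_{1,\gamma}(\lambda)=f_\lambda(\gamma)=\gamma^d+\lambda$ is visibly monic of degree $1=d^0$ in $\lambda$.

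\textbf{Inductive step.} Suppose $P_{n,\gamma}(\lambda)$ is monic of degree $d^{n-1}$ in $\lambda$. Using the cocycle-type identity $f_\lambda^{n+1}(\gamma)=f_\lambda\bigl(f_\lambda^n(\gamma)\bigr)=\bigl(f_\lambda^n(\gamma)\bigr)^d+\lambda$, we get
\begin{equation}
\label{eq:recursion_proposal}
P_{n+1,\gamma}(\lambda)=P_{n,\gamma}(\lambda)^d+\lambda.
\end{equation}
By the inductive hypothesis, $P_{n,\gamma}(\lambda)^d$ is the $d$-th power of a monic polynomial of degree $d^{n-1}$, hence is itself monic of degree $d\cdot d^{n-1}=d^n$. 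Since $n\ge 1$ forces $d^n\ge d\ge 2>1$, the extra summand $\lambda$ does not affect either the leading term or the degree. Therefore $P_{n+1,\gamma}(\lambda)$ is monic of degree $d^n=d^{(n+1)-1}$, completing the induction.

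\textbf{Remarks on difficulty.} There is essentially no obstacle here: the only point requiring a word of care is the degenerate behavior at the very small values of $n$ (the constant $P_{0,\gamma}$, and making sure the added $\lambda$ is swamped by the degree-$d^n$ term, which is exactly where $d\ge 2$ and $n\ge 1$ enter). One should also note that the argument uses only that $\gamma$ lies in a domain (here a field), so that taking $d$-th powers of monic polynomials preserves the leading coefficient; no characteristic-$p$ subtlety intervenes, since raising the \emph{coefficient} $1$ to the $d$-th power is harmless. I would present this as a two-line induction in the paper, with \eqref{eq:recursion_proposal} as the key identity.
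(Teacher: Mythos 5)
Your induction is correct and is exactly the ``simple induction on $n$'' the paper invokes without writing out: the recursion $P_{n+1,\gamma}(\lambda)=P_{n,\gamma}(\lambda)^d+\lambda$ together with $d\ge 2$ gives monicity and degree $d^n$ immediately. Nothing differs in substance from the paper's (implicit) argument.
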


\begin{remark}
\label{rem:prep}
We immediately obtain as a corollary of Lemma~\ref{lem:degree in l} the fact that $\gamma$ is not preperiodic for the entire family of polynomials $f_\lambda$.  Furthermore we obtain that if $\gamma$ is preperiodic for $f_\lambda$, then $\l\in\Lbar$.
\end{remark}

%%%%%%%%%%%%%%%%%%%%%%%%%%%%%%%%%%%%%%%%%%%%%%%%%%%%%%%%%%%%%%%%%%%%%%%%%%%%%%%

\subsection{Generalized Mandelbrot sets}

From now on, in Section~\ref{sec:heights}, we fix a place $v\in \Omega$. 
 
Following the same approach as in  \cite{Matt}, one defines the 
{\em generalized Mandelbrot set} $M_{\gamma,v}\subset \Aberk{\C_v}$ associated to $\gamma$; roughly speaking, $M_{\gamma,v}$ is the subset of $\C_v$ consisting of all $\l\in
\C_v$ such that $P_{n,\gamma}(\l)$ is $v$-adic bounded, as we let $n\to\infty$.   

Let $\l\in \C_v$ and define the local canonical height
$\hhat_{v,\lambda}(x)$ of $x\in \C_v$ with respect to the polynomial $f_\lambda$; more precisely, we have the formula 
\begin{equation}
\label{eq:def}
\hhat_{v,\lambda}(x) :=\lim_{n\to\infty}
\frac{\log^+|f_\lambda^n(x)|_v}{d^{n}},
\end{equation}
where $\log^+(z)=\log\max\{z,1\}$ for each real number $z$. Clearly, $\hhat_{v,\lambda}(x)$ is a continuous function of both $\l$
and $x$ on $\C_v$.  Also, we will be using the following easy fact:
\begin{equation}
\label{eq:def_2}
\hhat_{v,\lambda}(x)=\frac{\hhat_{v,\lambda}(f_\lambda^m(x))}{d^m}\text{ for each }m\in\N\text{ and for each }x\in\C_v.
\end{equation}

As $\C_v$ is a dense subspace of $\Aberk{\C_v}$,
continuity in $\l$ implies that  the canonical local height function 
$\hhat_{v,\lambda}(\gamma)$ has a natural extension on $\Aberk{\C_v}$ (note that the topology on
$\C_v$ is the restriction of the weak topology on $\Aberk{\C_v}$, so any
continuous function
on $\C_v$ will automatically have a unique extension to $\Aberk{\C_v}$). Then 
$\l\in M_{\gamma,v}$ if and only if $\hhat_{v,\lambda}(\gamma) = 0$.  Thus, 
$M_{\gamma,v}$ is a closed subset of $\Aberk{\C_v}$; in fact, the following is
true (as previously proved in \cite{Matt}). 

\begin{proposition}
\label{prop:bounded mandelbrot} 
$M_{\gamma,v}$ is a compact subset of $\Aberk{\C_v}$.  
\end{proposition}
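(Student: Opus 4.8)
The plan is to follow the argument of Baker--DeMarco \cite[Lemma~2.6 and surrounding discussion]{Matt} adapted to the nonarchimedean setting at hand. We already know that $M_{\gamma,v}$ is a closed subset of $\Aberk{\C_v}$, so it suffices to show it is bounded, i.e.\ that $M_{\gamma,v}$ is contained in a closed disk $\cD(0,R)$ for a suitable radius $R=R(\gamma,v)>0$; combining closedness with boundedness in $\Aberk{\C_v}$ yields compactness. Concretely, I would show that if $|\l|_v$ is sufficiently large (depending on $|\gamma|_v$), then $|f_\l^n(\gamma)|_v\to\infty$, so that $\hhat_{v,\lambda}(\gamma)>0$ and hence $\l\notin M_{\gamma,v}$.

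The key step is a standard escape-rate estimate. Write $z_0:=\gamma$ and $z_{n+1}:=f_\l(z_n)=z_n^d+\l$. Choose $R$ large enough that $R>\max\{1,|\gamma|_v\}$ and $R^{d}>2|\l|_v$ is replaced by the place-uniform condition $R^{d-1}\geq 2$ together with $|\l|_v\le R$ — more precisely, I would fix a constant and argue: if $|z_n|_v\ge |\l|_v$ and $|z_n|_v\ge 2^{1/(d-1)}$ (so in particular $|z_n|_v>1$), then by the ultrametric inequality $|z_{n+1}|_v=|z_n^d+\l|_v=|z_n|_v^d\ge |z_n|_v\cdot |z_n|_v^{d-1}\ge 2|z_n|_v$ once $|z_n|_v^{d-1}\ge 2$; and also $|z_{n+1}|_v=|z_n|_v^d>|z_n|_v\ge |\l|_v$, so the hypothesis is preserved. (When $|z_n^d|_v>|\l|_v$ strictly, the ultrametric inequality gives equality $|z_n^d+\l|_v=|z_n^d|_v$, which is what drives the estimate; the case $|z_n^d|_v=|\l|_v$ can be folded in by taking $R$ slightly larger.) Inductively $|z_n|_v\ge 2^n|z_0|_v$ whenever the starting point satisfies $|\gamma|_v\ge\max\{|\l|_v, 2^{1/(d-1)}\}$, whence $\log^+|f_\l^n(\gamma)|_v\ge n\log 2 + \log^+|\gamma|_v$ and so $\hhat_{v,\lambda}(\gamma)=\lim_n \log^+|f_\l^n(\gamma)|_v/d^n$ — wait, this only shows the limit is finite; to get it positive I instead track the first iterate that escapes: if $|\l|_v>\max\{1,|\gamma|_v^d, 2^{d/(d-1)}\}$ then $|z_1|_v=|\gamma^d+\l|_v=|\l|_v$ satisfies the escaping hypothesis, so $|z_{n+1}|_v\ge 2^n|\l|_v$ for all $n\ge 1$, giving $\hhat_{v,\lambda}(\gamma)=\lim_n \log^+|z_n|_v/d^n$; since $|z_{n+1}|_v=|z_n|_v^d$ eventually, one has $\log|z_{n+1}|_v=d\log|z_n|_v$, so $\log|z_n|_v/d^n$ is eventually constant and equal to a positive limit $>0$ because $|z_n|_v\to\infty$. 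Thus $\hhat_{v,\lambda}(\gamma)>0$, so $\l\notin M_{\gamma,v}$, proving $M_{\gamma,v}\subseteq \cD\bigl(0,\,\max\{1,|\gamma|_v^d,2^{d/(d-1)}\}\bigr)$.

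Having bounded $M_{\gamma,v}$ inside a closed disk $\cD(0,R)$ of $\Aberk{\C_v}$, I invoke the fact that closed disks in the Berkovich affine line are compact (they are the filled-in versions of the closed disks $\{|z|_v\le R\}$ in $\C_v$, and $\Aberk{\C_v}$ with the weak topology has the property that closed bounded sets are compact — see \cite{BR}). Since $M_{\gamma,v}$ is closed in $\Aberk{\C_v}$ and contained in the compact set $\cD(0,R)$, it is compact. The main obstacle is purely bookkeeping: getting the escape estimate clean in the ultrametric setting, in particular handling the borderline case $|\gamma^d|_v=|\l|_v$ (where the ultrametric inequality does not immediately give equality) — this is dispatched by enlarging $R$ so that the relevant iterate has strictly dominant leading term, after which the ``equality case'' of the ultrametric inequality applies and the argument becomes essentially the same one-line computation as over $\C$ but with $\max$ in place of $+$. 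No new ideas beyond \cite{Matt} are needed; the nonarchimedean nature in fact simplifies the estimate.
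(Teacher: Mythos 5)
Your proof is correct and follows essentially the same route as the paper: closedness plus boundedness in $\Aberk{\C_v}$ gives compactness, and boundedness comes from showing that $|\l|_v>\max\{1,|\gamma|_v^d\}$ forces $\hhat_{v,\l}(\gamma)>0$, which is exactly Lemma~\ref{lem:easy}~(iii) that the paper cites. Your escape estimate re-derives that lemma (the factors of $2$ and the threshold $2^{d/(d-1)}$ are superfluous in the ultrametric setting, since $|\gamma^d|_v<|\l|_v$ already gives $|f_\l(\gamma)|_v=|\l|_v$ on the nose), but the argument is sound.
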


\begin{proof}
Since we already know that $M_{\gamma,v}$ is a closed subset of the locally
compact space $\Aberk{\C_v}$, then in order to prove
Proposition~\ref{prop:bounded mandelbrot} it suffices to show that
$M_{\gamma,v}$ is a bounded subset of $\Aberk{\C_v}$. This last fact follows immediately from Lemma~\ref{lem:easy},~part~(iii).
\end{proof}

The following Lemma~\ref{lem:easy} is not only used in the proof of Proposition~\ref{prop:bounded mandelbrot}, but it is also repeatedly used throughout Section~\ref{sec:proof}; its proof is easy but its findings are important. 
\begin{lemma}
\label{lem:easy}
Let $\gamma,\lambda\in \C_v$.  
\begin{itemize}
\item[(i)] If $\max\{|\lambda|_v,|\gamma|_v\}\le 1$, then 
$$\hhat_{v,\lambda}(\gamma)=0.$$
\item[(ii)] If $|\gamma|_v^d>\max\{1,|\lambda|_v\}$, then 
\begin{equation}
\label{eq:def_0}
\hhat_{v,\lambda}(\gamma)= \log|\gamma|_v>0.
\end{equation}
\item[(iii)] If $|\lambda|_v>\max\left\{1,|\gamma|_v^{d}\right\}$, then $$\hhat_{v,\lambda}(\gamma)=\frac{\log|\lambda|_v}{d}>0.$$
\end{itemize}
\end{lemma}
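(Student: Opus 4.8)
The plan is to prove all three parts by direct induction using the ultrametric inequality, exploiting that for a nonarchimedean absolute value one has $|a+b|_v=\max\{|a|_v,|b|_v\}$ whenever $|a|_v\neq|b|_v$. Under each of the three hypotheses one can compute $|f_\lambda^n(\gamma)|_v$ exactly for every $n$, after which the defining limit \eqref{eq:def} is immediate. For part~(i), I would show by induction on $n$ that $|f_\lambda^n(\gamma)|_v\le 1$ for all $n\ge 0$: the base case is the hypothesis $|\gamma|_v\le 1$, and the inductive step is $|f_\lambda^{n+1}(\gamma)|_v=|f_\lambda^n(\gamma)^d+\lambda|_v\le\max\{|f_\lambda^n(\gamma)|_v^d,|\lambda|_v\}\le 1$. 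Hence $\log^+|f_\lambda^n(\gamma)|_v=0$ for all $n$, so $\hhat_{v,\lambda}(\gamma)=0$.

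For part~(ii), note first that $|\gamma|_v^d>1$ forces $|\gamma|_v>1$, so $\log|\gamma|_v>0$; then I would prove by induction that $|f_\lambda^n(\gamma)|_v=|\gamma|_v^{d^n}$ for all $n\ge 0$. In the inductive step, since $|\gamma|_v>1$ we have $|f_\lambda^n(\gamma)^d|_v=|\gamma|_v^{d^{n+1}}\ge|\gamma|_v^d>\max\{1,|\lambda|_v\}\ge|\lambda|_v$, so the ultrametric equality gives $|f_\lambda^{n+1}(\gamma)|_v=|\gamma|_v^{d^{n+1}}$. Consequently $\frac{\log^+|f_\lambda^n(\gamma)|_v}{d^n}=\log|\gamma|_v$ for every $n$, and the limit equals $\log|\gamma|_v$.

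For part~(iii), the hypothesis $|\lambda|_v>\max\{1,|\gamma|_v^d\}$ gives $|\lambda|_v>1$ and, since $|\gamma^d|_v=|\gamma|_v^d<|\lambda|_v$, it gives $|f_\lambda(\gamma)|_v=|\lambda|_v$. Then $|f_\lambda(\gamma)|_v^d=|\lambda|_v^d>\max\{1,|\lambda|_v\}$, so part~(ii) applied to the starting point $f_\lambda(\gamma)$ yields $\hhat_{v,\lambda}(f_\lambda(\gamma))=\log|\lambda|_v$; combining this with the functional equation \eqref{eq:def_2} (taking $m=1$) gives $\hhat_{v,\lambda}(\gamma)=\frac{1}{d}\hhat_{v,\lambda}(f_\lambda(\gamma))=\frac{\log|\lambda|_v}{d}>0$. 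Alternatively one can run a direct induction showing $|f_\lambda^n(\gamma)|_v=|\lambda|_v^{d^{n-1}}$ for all $n\ge 1$ and pass to the limit.

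There is essentially no serious obstacle: the only points requiring care are keeping the relevant inequalities strict so that the ultrametric equality $|a+b|_v=\max\{|a|_v,|b|_v\}$ applies at each stage, and handling the base cases of the inductions correctly. Reducing part~(iii) to part~(ii) via \eqref{eq:def_2} is a convenient shortcut that avoids repeating an almost identical argument.
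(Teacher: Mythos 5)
Your proposal is correct and follows essentially the same route as the paper: part (i) by integrality/ultrametric induction, part (ii) by the induction $|f_\lambda^n(\gamma)|_v=|\gamma|_v^{d^n}$, and part (iii) by reducing to part (ii) applied to $f_\lambda(\gamma)$ together with the functional equation \eqref{eq:def_2}. No gaps to report.
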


\begin{proof}[Proof of Lemma~\ref{lem:easy}.]
We first note that conclusion~(i) is immediate since knowing that both $\lambda$ and $\gamma$ are integral at the place $v$ yields that each $f_\lambda^n(\gamma)$ is integral at $v$, thus showing that $\hhat_{v,\lambda}(\gamma)=0$.

Next, we work under the hypotheses from part~(ii). The fact that $|\gamma|_v^d >\max\{1, |\lambda|_v\}$ yields that
$$|f_\lambda(\gamma)|_v=|\gamma^d+\lambda|_v=|\gamma|_v^d> |\gamma|_v.$$
An easy induction on $n$ shows that for each $n\ge 1$, we have that
$$\left|f_\lambda^n(\gamma)\right|_v = |\gamma|_v^{d^{n}};$$
then the desired conclusion in part~(ii) follows.

Finally, part~(iii) is a consequence of part~(ii) because the inequality 
$|\lambda|>\max\left\{1,|\gamma|_v^{d}\right\}$ 
yields
\begin{equation}
\label{eq:l-g}
\left|f_\lambda(\gamma)\right|_v=\left|\gamma^d+\lambda\right|_v=|\lambda|_v >|\lambda|_v^{\frac{1}{d}}.
\end{equation}
Equation~\eqref{eq:l-g} allows us to apply the conclusion from part~(ii) to the point $f_\l(\gamma)$ and the parameter $\l$ and thus, we get $$\hhat_{f_\l}(f_\l(\gamma))=|f_\l(\gamma)|_v=|\l|_v.$$  Then equation~\eqref{eq:def_2} yields the desired conclusion in Lemma~\ref{lem:easy},~part~(iii).  
\end{proof}

%%%%%%%%%%%%%%%%%%%%%%%%%%%%%%%%%%%%%%%%%%%%%%%%%%%%%%%%%%%%%%%%%%%%%%%%%%%%%%%

\subsection{The logarithmic capacities of the generalized Mandelbrot sets}

Next our goal is to compute
 the logarithmic capacities of the $v$-adic generalized 
 Mandelbrot sets $M_{\gamma,v}$ associated to $\gamma$ for our given family f polynomials $f_\lambda$.

\begin{theorem}
  \label{mandelbrot capacity}
The logarithmic capacity of
$M_{\gamma,v}$ is $\g(M_{\gamma,v}) =1 $. 
\end{theorem}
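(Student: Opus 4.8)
The plan is to identify $G_{M_{\gamma,v}}$ with the function $\hhat_{v,\lambda}(\gamma)$, viewed as a function of the parameter $\lambda$ on $\Aberk{\C_v}$, and then to apply Lemma~\ref{green function}(3) to conclude that this function is the Green's function of $M_{\gamma,v}$ with $V(M_{\gamma,v}) = 0$, hence $\g(M_{\gamma,v}) = e^{-0} = 1$. Concretely, write $g(\lambda) := \hhat_{v,\lambda}(\gamma)$. By construction (see the discussion preceding Proposition~\ref{prop:bounded mandelbrot}), $g$ is a continuous function on $\Aberk{\C_v}$, it is identically zero precisely on $M_{\gamma,v}$, and one checks it is subharmonic on $\Aberk{\C_v}$ and harmonic on the complement $\Aberk{\C_v}\setminus M_{\gamma,v}$ — this last point follows from the fact that $g$ is a uniform limit of the subharmonic functions $\lambda\mapsto d^{-n}\log^+|P_{n,\gamma}(\lambda)|_v$ (each harmonic away from the zeros of the polynomial $P_{n,\gamma}$), together with the standard potential-theoretic fact that a locally uniform limit of subharmonic functions that is harmonic on an open set stays harmonic there.

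The second ingredient is the asymptotic expansion of $g(\lambda)$ as $|\lambda|_v\to\infty$. By Lemma~\ref{lem:easy}(iii), for $|\lambda|_v > \max\{1,|\gamma|_v^d\}$ we have exactly $g(\lambda) = \hhat_{v,\lambda}(\gamma) = \tfrac{1}{d}\log|\lambda|_v$. This is not yet of the form $\log|z|_v + V + o(1)$ required by Lemma~\ref{green function}, because the coefficient in front of $\log|\lambda|_v$ is $1/d$ rather than $1$. The fix is to pass to the natural coordinate in which the Mandelbrot set is measured: since $P_{n,\gamma}(\lambda)$ is a monic polynomial of degree $d^{n-1}$ in $\lambda$ by Lemma~\ref{lem:degree in l}, the function $g(\lambda)=\lim_n d^{-n}\log^+|P_{n,\gamma}(\lambda)|_v$ satisfies $g(\lambda) = \lim_n d^{-n}\cdot d^{n-1}\log|\lambda|_v + o(1) = \tfrac{1}{d}\log|\lambda|_v + o(1)$, consistent with the above. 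So I would instead apply Lemma~\ref{green function}(3) not to $g$ directly but observe that $M_{\gamma,v}$ as a compact set has Green's function $G_{M_{\gamma,v}}$ which, by Lemma~\ref{green function}(1), behaves like $\log|z|_v + V(M_{\gamma,v})$ for large $|z|_v$; meanwhile the continuous subharmonic function $g$ agrees with $\tfrac1d\log|\lambda|_v$ for large $|\lambda|_v$. The cleanest route: note $d\cdot g$ is continuous, subharmonic, harmonic off $M_{\gamma,v}$, zero on $M_{\gamma,v}$, and equals $\log|\lambda|_v + 0 + o(1)$ at infinity; wait — that is false, $d g = \log|\lambda|_v$ exactly for large $\lambda$, but $dg$ need not be subharmonic with the right normalization globally unless $d$ is... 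Actually $dg$ is subharmonic since $g$ is and $d>0$. Then Lemma~\ref{green function}(3) applied to $dg$ gives $dg = G_{M_{\gamma,v}}$ and $V(M_{\gamma,v}) = 0$, whence $\g(M_{\gamma,v}) = e^{-0} = 1$. I should double-check that $G_{M_{\gamma,v}}(z) - \log^+|z|_v$ is bounded, which holds because $dg$ is continuous on all of $\Aberk{\C_v}$ and equals $\log|z|_v$ for $|z|_v$ large.

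So the steps, in order, are: (1) set $g(\lambda) = \hhat_{v,\lambda}(\gamma)$ and recall it is continuous on $\Aberk{\C_v}$ with zero set exactly $M_{\gamma,v}$; (2) express $g$ as the locally uniform limit of $d^{-n}\log^+|P_{n,\gamma}(\lambda)|_v$ and deduce $g$ is subharmonic on $\Aberk{\C_v}$ and harmonic on $\Aberk{\C_v}\setminus M_{\gamma,v}$; (3) use Lemma~\ref{lem:easy}(iii) (equivalently, the monic degree-$d^{n-1}$ statement of Lemma~\ref{lem:degree in l}) to get $dg(\lambda) = \log|\lambda|_v$ exactly for $|\lambda|_v$ large, hence $dg(z) - \log^+|z|_v$ is bounded and $dg(z) = \log|z|_v + 0 + o(1)$ at infinity; (4) invoke Lemma~\ref{green function}(3) to conclude $dg = G_{M_{\gamma,v}}$ and $V(M_{\gamma,v}) = 0$, so $\g(M_{\gamma,v}) = 1$.

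I expect the main obstacle to be step (2), the potential-theoretic regularity: verifying rigorously that $g$ is subharmonic and harmonic off $M_{\gamma,v}$ in the sense of \cite[Chapter~8, 10]{BR}. The convergence $d^{-n}\log^+|P_{n,\gamma}|_v \to g$ is locally uniform on $\C_v$ (this is the standard telescoping-sum estimate, using that $|f_\lambda(x)|_v$ and $|x|_v^d$ differ boundedly once $|x|_v$ is large relative to $|\lambda|_v$), and one has to promote this to the Berkovich line and cite the appropriate stability theorem for subharmonic/harmonic functions under uniform limits. Since \cite{Matt} carries out exactly this computation in the complex and number-field adelic setting and the excerpt explicitly says ``Following the same approach as in \cite{Matt}'' and ``as previously proved in \cite{Matt}'', the honest plan is to import the Berkovich potential theory verbatim from \cite{BR} and \cite{Matt}; the only genuinely new check is that nothing in the argument used $\mathrm{char} = 0$, and indeed nothing does — Lemma~\ref{lem:easy} and Lemma~\ref{lem:degree in l} hold in all characteristics, and the potential theory on $\Aberk{\C_v}$ is characteristic-agnostic.
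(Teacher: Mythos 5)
Your proposal is correct and follows essentially the same route as the paper: your $d\cdot g$ is exactly the function $G_{\gamma,v}=d\,\hhat_{v,\lambda}(\gamma)$ of equation~\eqref{eq:G}, whose identification with the Green's function (your step~(2)) the paper delegates to Lemma~\ref{green's function for mandelbrot} by citing \cite[Proposition~3.7]{Matt}, and the conclusion is obtained just as you do, from Lemma~\ref{lem:easy}~(iii) together with Lemma~\ref{green function}~(3), giving $V(M_{\gamma,v})=0$ and $\g(M_{\gamma,v})=1$. Your initial worry about the $1/d$ normalization is resolved exactly as in the paper, by working with $d\,\hhat_{v,\lambda}(\gamma)$ rather than $\hhat_{v,\lambda}(\gamma)$ itself.
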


The strategy for the proof of Theorem~\ref{mandelbrot capacity} is to
construct a continuous subharmonic function $G_{\l,v} : \Aberk{\C_v}\to \R$
satisfying Lemma~\ref{green function}~(3); the technical steps follow identically as in the proof of the similar result from \cite{Matt}. So, we let 
\begin{equation}
\label{eq:G}
G_{\gamma,v}(\lambda):= \lim_{n\to\infty}\frac{\log^+|f^n_\lambda(\gamma)|_v}{d^{n-1}}=d\cdot  \hhat_{v,\lambda}(\gamma).
\end{equation}
Note that $G_{\gamma,v}(\l) \ge 0$ for all $\l\in \Aberk{\C_v}$; also,  $\l\in M_{\gamma,v}$ if and only if $G_{\gamma,v}(\l) = 0$.   

\begin{lemma}
\label{green's function for mandelbrot}
$G_{\gamma,v}$ is the Green's function for $M_{\gamma,v}$ relative to $\infty.$ 
\end{lemma}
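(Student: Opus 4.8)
The plan is to verify that $G_{\gamma,v}$ satisfies all the hypotheses of Lemma~\ref{green function}~(3) with $E=M_{\gamma,v}$: namely that it is a continuous subharmonic function on $\Aberk{\C_v}$, harmonic on the unbounded component $U$ of $\Aberk{\C_v}\setminus M_{\gamma,v}$, identically zero on $M_{\gamma,v}$, and such that $G_{\gamma,v}(z)-\log^+|z|_v$ is bounded. The vanishing on $M_{\gamma,v}$ and the nonnegativity are already recorded after~\eqref{eq:G}, since $G_{\gamma,v}=d\cdot\hhat_{v,\lambda}(\gamma)\ge 0$ with equality exactly on $M_{\gamma,v}$. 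Continuity on $\Aberk{\C_v}$ follows from the continuity of $\hhat_{v,\lambda}(\gamma)$ in $\lambda$ on $\C_v$ together with the fact (noted before Proposition~\ref{prop:bounded mandelbrot}) that a continuous function on the dense subspace $\C_v$ has a unique continuous extension to $\Aberk{\C_v}$, which by~\eqref{eq:G} is again $d\cdot\hhat_{v,\lambda}(\gamma)$.

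For subharmonicity, observe that for each $n$ the function $\lambda\mapsto \frac{\log^+|P_{n,\gamma}(\lambda)|_v}{d^{n-1}}$ is subharmonic on $\Aberk{\C_v}$: indeed $P_{n,\gamma}\in L[\lambda]$, so $\log|P_{n,\gamma}(\lambda)|_v$ is subharmonic (harmonic away from its finitely many zeros, in the sense of \cite[Chapter~8]{BR}), and $\log^+|P_{n,\gamma}(\lambda)|_v=\max\{\log|P_{n,\gamma}(\lambda)|_v,\,0\}$ is a maximum of two subharmonic functions. A standard telescoping estimate — comparing $\log^+|f_\lambda(x)|_v$ with $d\log^+|x|_v$ via $f_\lambda(x)=x^d+\lambda$, which differ by a bounded amount on any bounded region — shows that these functions converge to $G_{\gamma,v}$ uniformly on bounded subsets of $\C_v$, hence locally uniformly on $\Aberk{\C_v}$; a locally uniform limit of subharmonic functions is subharmonic, so $G_{\gamma,v}$ is subharmonic. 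The growth condition is then immediate from Lemma~\ref{lem:easy}~(iii): for $|\lambda|_v>\max\{1,|\gamma|_v^d\}$ we have $\hhat_{v,\lambda}(\gamma)=\frac{\log|\lambda|_v}{d}$, so $G_{\gamma,v}(\lambda)=\log|\lambda|_v$ identically for $|\lambda|_v$ large; by continuity $G_{\gamma,v}(z)-\log^+|z|_v$ is bounded on all of $\Aberk{\C_v}$, and in fact $G_{\gamma,v}(z)=\log|z|_v+o(1)$ as $|z|_v\to\infty$.

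The one point requiring care — and the main obstacle — is harmonicity of $G_{\gamma,v}$ on $U$. I would argue locally: fix $\lambda_0\in U$; since $G_{\gamma,v}>0$ on $U$ and is continuous, there is a neighborhood of $\lambda_0$ and a constant $c>0$ with $G_{\gamma,v}>c$ there. By the local uniform convergence above, for all sufficiently large $n$ we have $\frac{\log^+|P_{n,\gamma}(\lambda)|_v}{d^{n-1}}>c/2>0$ on this neighborhood, hence $|P_{n,\gamma}(\lambda)|_v>1$ there; in particular $P_{n,\gamma}$ has no zeros on the neighborhood, so $\log^+|P_{n,\gamma}(\lambda)|_v=\log|P_{n,\gamma}(\lambda)|_v$ is harmonic there. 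Thus near $\lambda_0$ the function $G_{\gamma,v}$ is a locally uniform limit of harmonic functions, hence harmonic; since $\lambda_0\in U$ was arbitrary, $G_{\gamma,v}$ is harmonic on $U$. (Alternatively, one can pull harmonicity back from the region $|\lambda|_v$ large — where $G_{\gamma,v}=\log|\lambda|_v$ — using the functional equation $\hhat_{v,\lambda}(\gamma)=\hhat_{v,\lambda}(f_\lambda^m(\gamma))/d^m$ from~\eqref{eq:def_2}.) With all four hypotheses of Lemma~\ref{green function}~(3) verified, we conclude $G_{\gamma,v}=G_{M_{\gamma,v}}$, which is the assertion of Lemma~\ref{green's function for mandelbrot}; and combined with the expansion $G_{\gamma,v}(z)=\log|z|_v+0+o(1)$, the last sentence of Lemma~\ref{green function}~(3) also gives $V(M_{\gamma,v})=0$, i.e.\ Theorem~\ref{mandelbrot capacity}.
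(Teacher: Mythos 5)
Your proposal is correct, and it is essentially the paper's argument: the paper simply cites \cite[Proposition~3.7]{Matt} (the same verification of the hypotheses of Lemma~\ref{green function}~(3) for the escape-rate function $G_{\gamma,v}$, via local uniform convergence of $\frac{1}{d^{n-1}}\log^+|P_{n,\gamma}(\lambda)|_v$, subharmonicity, harmonicity off $M_{\gamma,v}$ from the absence of zeros of $P_{n,\gamma}$ where $G_{\gamma,v}>0$, and the growth $G_{\gamma,v}(\lambda)=\log|\lambda|_v$ for $|\lambda|_v$ large from Lemma~\ref{lem:easy}~(iii)), and your write-up spells out exactly that argument in the Berkovich setting.
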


\begin{proof}
The proof is essentially the same as the proof of
\cite[Proposition~3.7]{Matt} since we deal with the same family of polynomials.   We also note that generalizations of this result hold for other families of polynomials (see \cite{GHT-ANT, Matt-2}), including over fields of positive characteristic (see \cite[Lemma~4.7]{GH-Acta}). 
\end{proof}

Now we are ready to prove Theorem~\ref{mandelbrot capacity}.
\begin{proof}[Proof of  Theorem~\ref{mandelbrot capacity}.]
Lemma~\ref{lem:easy}~(iii) yields that 
\begin{equation}
\label{eq:G 2}
 G_{\gamma,v}(\l)  = \log|\l|_v\text{, for $|\l|_v$ sufficiently large.}  
\end{equation}
Combining Lemma~\ref{green function}~(3) with equation~\eqref{eq:G 2}, we find that $V(M_{\gamma,v}) = 0$. Hence,
  the logarithmic capacity of $M_{\gamma,v}$ is $1$, as desired. 
\end{proof}

%%%%%%%%%%%%%%%%%%%%%%%%%%%%%%%%%%%%%%%%%%%%%%%%%%%%%%%%%%%%%%%%%%%%%%%%%%%%%%

\subsection{The generalized ad\`elic Mandelbrot set}

Let us call $\M_{\gamma} = \prod_{v\in
\Omega} M_{\gamma,v}$  the \emph{generalized ad\`elic Mandelbrot set} associated to
$\gamma$. As a corollary to Theorem~\ref{mandelbrot capacity} we see that
$\M_{\gamma}$ satisfies the hypothesis of Theorem~\ref{thm:equi}.
\begin{corollary}
 \label{adelic mandelbrot}
For all but finitely many nonarchimedean places $v$, we have that $M_{\gamma,v}$ is the closed unit disk $\cD(0;1)$ in $\Aberk{\C_v}$; furthermore $\g(\M_{\gamma}) = 1$.
\end{corollary}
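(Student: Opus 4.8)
The plan is to read off $M_{\gamma,v}$ directly from Lemma~\ref{lem:easy} at every place where $\gamma$ is a $v$-adic unit, and then to obtain the capacity statement as an immediate consequence of Theorem~\ref{mandelbrot capacity}. First I would isolate the finitely many ``bad'' places: if $\gamma=0$ then $|\gamma|_v\le 1$ for all $v\in\Omega$, while if $\gamma\neq 0$ then axiom~(i) in the definition of the product formula field $L$ forces $|\gamma|_v=1$ — in particular $|\gamma|_v\le 1$ — for all but finitely many $v\in\Omega$. Fix any such $v$, so that $|\gamma|_v\le 1$, and hence $\max\{1,|\gamma|_v^d\}=1$. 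Then Lemma~\ref{lem:easy}(i) gives $\hhat_{v,\lambda}(\gamma)=0$ whenever $|\lambda|_v\le 1$, and Lemma~\ref{lem:easy}(iii) gives $\hhat_{v,\lambda}(\gamma)=(\log|\lambda|_v)/d>0$ whenever $|\lambda|_v>1$. In view of~\eqref{eq:G}, this means $G_{\gamma,v}(\lambda)=\log^+|\lambda|_v$ for every classical point $\lambda\in\C_v$.

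Next I would upgrade this identity from $\C_v$ to the whole Berkovich line: both $G_{\gamma,v}$ and $\lambda\mapsto\log^+|\lambda|_v$ are continuous on $\Aberk{\C_v}$, and $\C_v$ is dense there, so $G_{\gamma,v}(\lambda)=\log^+|\lambda|_v$ on all of $\Aberk{\C_v}$. Since $\lambda\in M_{\gamma,v}$ if and only if $\hhat_{v,\lambda}(\gamma)=0$, i.e. $G_{\gamma,v}(\lambda)=0$, i.e. $|\lambda|_v\le 1$, this yields $M_{\gamma,v}=\cD(0;1)$, which is the first assertion of the corollary (and is consistent with $\g(\cD(0;1))=1$ and with Lemma~\ref{green's function for mandelbrot}). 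For the second assertion, Theorem~\ref{mandelbrot capacity} already gives $\g(M_{\gamma,v})=1$ for \emph{every} $v\in\Omega$, so the finite product~\eqref{logarithmic capacity formula 0} defining $\g(\M_{\gamma})$ equals $1$; thus $\g(\M_{\gamma})=1$ is essentially a restatement of Theorem~\ref{mandelbrot capacity}, and $\M_{\gamma}$ therefore satisfies the hypothesis of Theorem~\ref{thm:equi}.

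There is no substantive obstacle here; the corollary is a bookkeeping consequence of the preceding results. The only point requiring a word of care is the transition from the classical points $\C_v$ to the full Berkovich line $\Aberk{\C_v}$, which is dispatched using density of $\C_v$ together with the continuity of $G_{\gamma,v}$ noted right after~\eqref{eq:G} (and the continuity of $\log^+|\cdot|_v$).
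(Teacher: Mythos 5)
Your proof is correct and follows essentially the same route as the paper: at each place where $|\gamma|_v\le 1$ (all but finitely many, by axiom~(i) of the product formula), Lemma~\ref{lem:easy}, parts~(i) and~(iii), identify $M_{\gamma,v}$ with $\cD(0;1)$, and the capacity claim is then immediate from Theorem~\ref{mandelbrot capacity} and the finite product in~\eqref{logarithmic capacity formula 0}. Your extra care about passing from $\C_v$ to $\Aberk{\C_v}$ via continuity and density is a fine (implicit in the paper) bookkeeping point, not a different argument.
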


\begin{proof}
Indeed, using Lemma~\ref{lem:easy},~parts~(i)~and~(iii), we get that for each place $v\in\Omega$ where $\gamma$ is integral (i.e., $|\gamma|_v\le 1$), we have that  $M_{\gamma,v}=\cD(0;1)$. The ``furthermore'' assertion follows immediately from Theorem~\ref{mandelbrot capacity}. 
\end{proof} 

%%%%%%%%%%%%%%%%%%%%%%%%%%%%%%%%%%%%%%%%%%%%%%%%%%%%%%%%%%%%%%%%%%%%%%%%%%%%%%%

\subsection{Global canonical heights}
\label{subsec:global_height}

For each $\lambda\in\Lbar$ (again note that $\Lbar=L^{\rm sep}$), we will use the notation 
\begin{equation}
\label{eq:height_adelic}
h_{\M_{\gamma}}(\l):=h_{\M_{\gamma}}(S)\text{ where $S$ is the $\gal(L^{\rm sep}/L)$-orbit of $\l$}.
\end{equation}  
The notation from \eqref{eq:height_adelic} is connected to the global canonical height associated to the polynomials $f_\lambda$. 

\begin{definition}
\label{def:global_height}
For each $x\in\Lbar$, we define its Weil height as
\begin{equation}
\label{eq:Weil_height}
h(x):=\frac{1}{[L(x):L]}\cdot \sum_{v\in\Omega}\sum_{y\in \gal(L^{\rm sep}/L)\cdot x} \log^+|y|_v.
\end{equation}
For each $\lambda\in\Lbar$, we define the global canonical height of $x\in\Lbar$ with respect to the polynomial $f_\lambda$ as
\begin{equation}
\label{eq:global_height}
\hhat_{f_\lambda}(x)=\lim_{n\to\infty}\frac{h\left(f^n_\lambda(x)\right)}{d^n}.
\end{equation}
\end{definition}
%
%It is immediate to see from equation~\eqref{eq:Weil_height} that 
%\begin{equation}
%\label{eq:conseq}
%h(\sigma(x))=h(x)\text{ for each $x\in\Lbar$ and for each $\sigma\in\gal(L(x)/L)$.}
%\end{equation}
%
\begin{remark}
\label{rem:clearly}
If $\gamma$ is preperiodic under the action of $f_\l$, then it is immediate to see (based on equation~\eqref{eq:global_height}) that $\hhat_{f_\l}(\gamma)=0$ (since there are finitely many distinct points $f^n_\l(\gamma)$). 

However, using \cite[Theorem~B]{Rob}, one can also establish the converse statement as well, i.e., once $\hhat_{f_\l}(\gamma)=0$, then $\gamma$ must be preperiodic under the action of $f_\l$. Indeed, as long as $\l\notin\Fpbar$, then $f_\l$ is not isotrivial and therefore, \cite[Theorem~B]{Rob} shows that a point is preperiodic if and only if its canonical height equals $0$. Finally, if $\l\in\Fpbar$, then it is immediate to see that $\gamma$ is preperiodic if and only if also $\gamma\in\Fpbar$. Similarly, if $\hhat_{f_\l}(\gamma)=0$ (and $\l\in\Fpbar$), then we must have that $|\gamma|_v\le 1$ for each place $v\in\Omega$ (see Lemma~\ref{lem:easy}~(ii)) and therefore, we must also have that $\gamma\in\Fpbar$ (see \eqref{eq:finite_field}).  
\end{remark}

The following fact follows from the decomposition of the global canonical height as a sum of local canonical heights.
\begin{lemma}
\label{lem:global_height}
Let $\gamma\in L$. Then for each $\l\in\Lbar$, we have $h_{\M_\gamma}(\l)= d\cdot \hhat_{f_\l}(\gamma)$.
\end{lemma}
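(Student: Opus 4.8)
\textbf{Proof proposal for Lemma~\ref{lem:global_height}.} The plan is to expand both sides into their local contributions and match them place by place. First I would recall that, by definition~\eqref{def ade hig} together with the notation~\eqref{eq:height_adelic}, we have
\[
h_{\M_\gamma}(\l)=\sum_{v\in\Omega}\Bigl(\frac{1}{|S|}\sum_{z\in S}G_v(z)\Bigr),
\]
where $S$ is the $\gal(L^{\rm sep}/L)$-orbit of $\l$ and $G_v=G_{M_{\gamma,v}}$ is the Green's function of the $v$-adic generalized Mandelbrot set relative to $\infty$. By Lemma~\ref{green's function for mandelbrot} we know $G_v=G_{\gamma,v}$, and by~\eqref{eq:G} we have $G_{\gamma,v}(z)=d\cdot\hhat_{v,z}(\gamma)$ for every $z\in\Aberk{\C_v}$; in particular this holds for the (type-1) points $z\in S\subset\Lbar$. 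Thus
\[
h_{\M_\gamma}(\l)=d\cdot\sum_{v\in\Omega}\frac{1}{|S|}\sum_{z\in S}\hhat_{v,z}(\gamma).
\]

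Next I would fix one place $v$ and one conjugate $z\in S$ (so $z$ is a root of the minimal polynomial of $\l$ over $L$) and unwind $\hhat_{v,z}(\gamma)$ via the limit definition~\eqref{eq:def}: $\hhat_{v,z}(\gamma)=\lim_{n\to\infty} d^{-n}\log^+|f_z^n(\gamma)|_v$. Summing over all conjugates $z$ and over all places $v$, and interchanging the (finite, once we account for the fact that $\gamma\in L$ and $S$ is finite) sums with the limit, the inner double sum $\sum_{v}\sum_{z}\log^+|f_z^n(\gamma)|_v$ is exactly $[L(\l):L]\cdot h\bigl(f^n_{\cdot}(\gamma)\bigr)$ evaluated appropriately — more precisely, since the conjugates $z$ of $\l$ are permuted by $\gal(L^{\rm sep}/L)$ and $f_z^n(\gamma)=P_{n,\gamma}(z)$ is a polynomial expression with coefficients in $L$, the set $\{P_{n,\gamma}(z):z\in S\}$ is the $\gal(L^{\rm sep}/L)$-orbit of $P_{n,\gamma}(\l)=f^n_\l(\gamma)$ (with multiplicity), so by the Weil height formula~\eqref{eq:Weil_height},
\[
\frac{1}{|S|}\sum_{v\in\Omega}\sum_{z\in S}\log^+|f^n_z(\gamma)|_v = h\bigl(f^n_\l(\gamma)\bigr).
\]
Dividing by $d^n$, passing to the limit, and comparing with~\eqref{eq:global_height} gives $\frac{1}{|S|}\sum_{v}\sum_{z}\hhat_{v,z}(\gamma)=\hhat_{f_\l}(\gamma)$, whence $h_{\M_\gamma}(\l)=d\cdot\hhat_{f_\l}(\gamma)$.

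The main obstacle is justifying the interchange of limits and sums, i.e.\ that the local limits~\eqref{eq:def} combine into the global limit~\eqref{eq:global_height} uniformly enough. The standard way around this is to use Lemma~\ref{lem:easy} (telescoping estimate): for each $v$ the difference $\log^+|f^{n+1}_z(\gamma)|_v - d\log^+|f^n_z(\gamma)|_v$ is bounded by a constant $C_v$ that is zero for all but finitely many $v$ (namely those where $\gamma$ or $z$ fail to be integral — and the bad places for $z$ are controlled since $z$ ranges over a Galois orbit of a fixed $\l$), so the telescoping series defining $\hhat_{v,z}(\gamma)$ converges at rate $O(d^{-n})$ with a place-uniform tail, and summing over the finitely many relevant places legitimizes swapping $\sum_v$ with $\lim_n$. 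Once this bookkeeping is in place, the identification of $\{P_{n,\gamma}(z):z\in S\}$ with the Galois orbit of $f^n_\l(\gamma)$ is immediate from the fact that $P_{n,\gamma}$ has coefficients in $L$ (Lemma~\ref{lem:degree in l}), and the lemma follows.
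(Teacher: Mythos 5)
Your proposal is correct and follows essentially the same route as the paper: expand $h_{\M_\gamma}(\l)$ via \eqref{def ade hig} and \eqref{eq:height_adelic} into Green's functions, identify $G_{\gamma,v}=d\cdot\hhat_{v,\cdot}(\gamma)$ via \eqref{eq:G}, interchange the limit with the (effectively finite) sums using Lemma~\ref{lem:easy}~(i), and use the Galois equivariance $\sigma(f^n_\l(\gamma))=f^n_{\sigma(\l)}(\gamma)$ (as $\gamma\in L$) to reassemble the Weil height \eqref{eq:Weil_height} of $f^n_\l(\gamma)$ and pass to \eqref{eq:global_height}. Your extra care about counting conjugates with multiplicity and about the place-uniform telescoping bound only makes explicit what the paper leaves implicit.
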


\begin{proof}
Combining equations~\eqref{eq:height_adelic}~and~\eqref{def ade hig}, we have
\begin{equation}
\label{eq:111}
h_{\M_\gamma}(\l)=\frac{1}{[L(\l):L]}\cdot \sum_{v\in\Omega}\sum_{\sigma\in\gal(L(\l)/L)} G_{\gamma,v}(\sigma(\lambda)).
\end{equation}
Then equation~\eqref{eq:111} combined with equation~\eqref{eq:G} yields
\begin{equation}
\label{eq:112}
h_{\M_\gamma}(\l)= \frac{1}{[L(\l):L]}\cdot \sum_{v\in\Omega}\sum_{\sigma\in\gal(L(\l)/L)} d\cdot \hhat_{v,\sigma(\l)}(\gamma)
\end{equation}
and then re-arranging the sum from \eqref{eq:112} since only finitely many terms are nonzero leads to
\begin{equation}
\label{eq:113}
h_{\M_\gamma}(\l)= \frac{d}{[L(\l):L]}\cdot \sum_{\sigma\in\gal(L(\l)/L)}\sum_{v\in\Omega} \lim_{n\to\infty}\frac{\log^+\left|f^n_{\sigma(\l)}(\gamma)\right|_v}{d^n}.
\end{equation}
Using that for each $\l$ and $\sigma$, there are only finitely many places $v\in\Omega$ such that $\sigma(\l)$ and $\gamma$ are not $v$-adic integral, then equation~\eqref{eq:113} can be re-written as follows (see also Lemma~\ref{lem:easy}~(i))  
\begin{equation}
\label{eq:114}
h_{\M_\gamma}(\l)= \frac{d}{[L(\l):L]}\cdot \lim_{n\to\infty} \frac{1}{d^n}\cdot \sum_{\sigma\in\gal(L(\l)/L)}  \sum_{v\in\Omega} \log^+\left|f^n_{\sigma(\l)}(\gamma)\right|_v.
\end{equation}
Now, using the fact that $\sigma(f_\l^n(\gamma))=f^n_{\sigma(\l)}(\gamma)$ for each $\sigma\in\gal(L^{\rm sep}/L)$ (since $\gamma\in L$), coupled with the definition of the Weil height for  $f^n_{\l}(\gamma)$ (see equation~\eqref{eq:Weil_height}), we obtain 
$$h_{\M_\gamma}(\l)=d\cdot \lim_{n\to\infty} \frac{h\left(f^n_\lambda(\gamma)\right)}{d^n} = d\cdot \hhat_{f_\lambda}(\gamma).$$
This concludes our proof for Lemma~\ref{lem:global_height}.
\end{proof}

%%%%%%%%%%%%%%%%%%%%%%%%%%%%%%%%%%%%%%%%%%%%%%%%%%%%%%%%%%%%%%%%%%%%%%%%%%%%%%%
%%%%%%%%%%%%%%%%%%%%%%%%%%%%%%%%%%%%%%%%%%%%%%%%%%%%%%%%%%%%%%%%%%%%%%%%%%%%%%%

\section{Equality of the respective local canonical heights}
\label{sec:equality}

We continue with the notation as in Section~\ref{sec:heights}. In particular, $L$ is a finite extension of the perfect closure of the rational function field in one variable over $\Fpbar$ (see \eqref{eq:L}). Also, for any point $\gamma\in L$, we construct the generalized ad\`elic Mandelbrot set $\M_\gamma$ and then define the associated height $h_{\M_\gamma}$.  

The following result is the key technical ingredient which we extract from Theorem~\ref{thm:equi}.

\begin{theorem}
\label{thm:iff}
Let $L$, $f_\l$, $\hhat_{f_\l}$, $\hhat_{v,\l}$ be defined as in Section~\ref{sec:heights}; also, let $\alpha,\beta\in L$. 
Assume there exists an infinite sequence  $\{\lambda_n\}$ in $\Lbar$ with the property that 
\begin{equation}
\label{eq:Bogomolov}
\lim_{n\to\infty} \hhat_{f_{\l_n}}(\alpha)=\lim_{n\to\infty} \hhat_{f_{\l_n}}(\beta)=0.
\end{equation}
Then for each $\lambda\in\Lbar$ and for each $v\in\Omega$, we have that $\hhat_{v,\lambda}(\alpha)=\hhat_{v,\lambda}(\beta)$.
\end{theorem}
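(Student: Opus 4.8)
The plan is to apply the arithmetic equidistribution theorem (Theorem~\ref{thm:equi}) simultaneously to the two generalized adèlic Mandelbrot sets $\M_\alpha$ and $\M_\beta$, and to extract from the resulting coincidence of equilibrium measures the pointwise equality of the Green's functions $G_{\alpha,v}$ and $G_{\beta,v}$, which by \eqref{eq:G} is exactly the desired equality of local canonical heights. First I would use Lemma~\ref{lem:global_height} to translate the hypothesis \eqref{eq:Bogomolov} into a statement about adèlic heights: setting $S_n$ to be the $\gal(L^{\mathrm{sep}}/L)$-orbit of $\l_n$, we get $h_{\M_\alpha}(S_n)=d\cdot\hhat_{f_{\l_n}}(\alpha)\to 0$ and likewise $h_{\M_\beta}(S_n)\to 0$. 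By Remark~\ref{rem:prep}, no $\gamma\in L$ is preperiodic for the entire family, so the $\l_n$ range over infinitely many distinct algebraic points and (after passing to a subsequence, using that each $P_{n,\gamma}$ has bounded degree for fixed $n$) we may assume $|S_n|\to\infty$. Since $\g(\M_\alpha)=\g(\M_\beta)=1$ by Corollary~\ref{adelic mandelbrot}, both adèlic sets satisfy the hypotheses of Theorem~\ref{thm:equi}.

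Next I would apply Theorem~\ref{thm:equi} to each of $\M_\alpha$ and $\M_\beta$ with the \emph{same} sequence $S_n$. For each fixed $v\in\Omega$, letting $\d_n$ be the discrete probability measure on $S_n$, we conclude that $\d_n\to\mu_{\alpha,v}$ weakly and also $\d_n\to\mu_{\beta,v}$ weakly, where $\mu_{\alpha,v}$ and $\mu_{\beta,v}$ are the equilibrium measures of $M_{\alpha,v}$ and $M_{\beta,v}$ respectively. By uniqueness of weak limits, $\mu_{\alpha,v}=\mu_{\beta,v}$ for every $v\in\Omega$. Now I would invoke the potential-theoretic identification of the Green's function from its Laplacian and its behaviour at $\infty$: by Lemma~\ref{green's function for mandelbrot}, $G_{\alpha,v}$ (resp.\ $G_{\beta,v}$) is the Green's function of $M_{\alpha,v}$ (resp.\ $M_{\beta,v}$) relative to $\infty$; in the language of \cite[Chapter~8]{BR}, $\Delta G_{\alpha,v}=\mu_{\alpha,v}-\delta_\infty$ and similarly for $\beta$. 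Since the two equilibrium measures agree, $G_{\alpha,v}-G_{\beta,v}$ is harmonic on all of $\Aberk{\C_v}\setminus\{\infty\}$; and by \eqref{eq:G 2} (i.e.\ Lemma~\ref{lem:easy}~(iii) applied to both $\alpha$ and $\beta$) we have $G_{\alpha,v}(\l)=G_{\beta,v}(\l)=\log|\l|_v$ for $|\l|_v$ large, so $G_{\alpha,v}-G_{\beta,v}$ is a bounded harmonic function on $\Aberk{\C_v}$, hence constant, and the constant is $0$ by its behaviour near $\infty$. Therefore $G_{\alpha,v}\equiv G_{\beta,v}$ on $\Aberk{\C_v}$, and restricting to $\C_v$ and dividing by $d$ via \eqref{eq:G} gives $\hhat_{v,\l}(\alpha)=\hhat_{v,\l}(\beta)$ for every $\l\in\C_v$; in particular this holds for every $\l\in\Lbar\subset\C_v$.

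The main obstacle I anticipate is the passage from ``$\mu_{\alpha,v}=\mu_{\beta,v}$ for all $v$'' to ``$G_{\alpha,v}=G_{\beta,v}$'', which is the step that genuinely uses nonarchimedean potential theory on the Berkovich line: one must know that a function whose Laplacian vanishes and which is bounded is constant (a Liouville-type statement in the sense of \cite[Chapter~8]{BR}), and one must handle the normalization at $\infty$ carefully, since a priori the equidistribution theorem only pins down the measures, not the potentials. A secondary technical point is ensuring $|S_n|\to\infty$: this needs the observation (Remark~\ref{rem:prep}, via Lemma~\ref{lem:degree in l}) that a fixed $\gamma\in L$ cannot be preperiodic for all $f_\l$ simultaneously, together with a diagonal/subsequence argument to guarantee the Galois orbits grow. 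Once these two points are secured, the rest is a formal application of the quoted results.
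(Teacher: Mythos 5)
Your overall strategy coincides with the paper's: translate \eqref{eq:Bogomolov} via Lemma~\ref{lem:global_height} into $h_{\M_\alpha}(S_n),\,h_{\M_\beta}(S_n)\to 0$, apply Theorem~\ref{thm:equi} to both ad\`elic Mandelbrot sets with the \emph{same} sequence $S_n$, deduce $\mu_{\alpha,v}=\mu_{\beta,v}$ for every $v$, and then pass to equality of the Green's functions, hence of the local heights via \eqref{eq:G}. Your final step (comparing Laplacians and using the normalization $G_{\gamma,v}(\l)=\log|\l|_v$ for $|\l|_v$ large, then a Liouville/maximum-principle argument) is a legitimate variant of the paper's route, which instead deduces $M_{\alpha,v}=M_{\beta,v}$ from the equality of the equilibrium measures and then identifies the two Green's functions; both versions rest on the same potential theory from \cite{BR} and the capacity-one normalization from Theorem~\ref{mandelbrot capacity}.

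There is, however, a genuine gap in how you secure the hypothesis $|S_n|\to\infty$ of Theorem~\ref{thm:equi}. You take $S_n$ to be the Galois orbit of $\l_n$ alone and argue, via Remark~\ref{rem:prep} and the bounded degrees of the polynomials $P_{n,\gamma}$, that after passing to a subsequence $|S_n|\to\infty$. That reasoning would only be relevant for parameters that are roots of equations $P_{n,\gamma}(\l)=P_{m,\gamma}(\l)$, i.e.\ preperiodic parameters; but in Theorem~\ref{thm:iff} the $\l_n$ are \emph{not} assumed to be preperiodic parameters, only to have small canonical height, and nothing at this stage rules out that all the $\l_n$ lie in $L$ itself, in which case every Galois orbit is a singleton and no subsequence has growing orbits (even for preperiodic parameters, bounded degree of the defining polynomials does not by itself force $[L(\l_n):L]\to\infty$). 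The paper avoids this with a different choice: $S_n$ is the \emph{union} of the Galois orbits of $\l_1,\dots,\l_n$. This set is $\gal(L^{\rm sep}/L)$-invariant, satisfies $|S_n|\ge n\to\infty$ since the $\l_m$ are distinct, and still has $h_{\M_\alpha}(S_n),\,h_{\M_\beta}(S_n)\to 0$, because $h_{\M_\gamma}(S_n)$ is a weighted average, with nonnegative terms, of the quantities $d\cdot\hhat_{f_{\l_m}}(\gamma)$ for $1\le m\le n$, and these tend to $0$; a standard Ces\`aro-type estimate then gives the claim. With this replacement for your choice of $S_n$, the rest of your argument goes through.
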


\begin{proof}
Lemma~\ref{lem:global_height} yields that for each $\l$, we have
$$h_{\M_\alpha}(\l)=d\cdot \hhat_{f_\l}(\alpha)\text{ and }h_{\M_\beta}(\l)=d\cdot  \hhat_{f_\l}(\beta),$$
where $h_{\M_\alpha}$ and $h_{\M_\beta}$ are constructed as in Section~\ref{sec:heights} with respect to the generalized ad\`elic Mandelbrot sets $\M_\alpha$ and $\M_\beta$. Hence, equation~\eqref{eq:Bogomolov} yields 
\begin{equation}
\label{eq:4000}
\lim_{n\to\infty}h_{\M_\alpha}(\l_n)=\lim_{n\to\infty}h_{\M_\beta}(\l_n)=0.
\end{equation}
Equation~\eqref{eq:4000} shows that the hypothesis from Theorem~\ref{thm:equi} holds and therefore, we conclude that $M_{\alpha,v}=M_{\beta,v}$ for each place $v\in \Omega$. 
Indeed,  for each $n\in\N$, we may
take $S_n$ be the union of the sets of Galois conjugates for $\l_m$
for all $1\le m\le n$. Clearly $|S_n|\to\infty$ as $n\to\infty$, and
also each $S_n$ is $\gal(L^{\rm sep}/L)$-invariant. Thus, we obtain
that $\mu_{M_{\alpha,v}}=\mu_{M_{\beta,v}}$  for each $v\in \Omega$ and since they are both
supported on $M_{\alpha,v}$ (resp. $M_{\beta,v}$), we also get that
$M_{\alpha,v}=M_{\beta,v}$. It follows that the two Green's functions 
$G_{\alpha,v}$ and $G_{\beta,v}$ for $M_{\alpha,v}$ and $M_{\beta,v}$ are the same. 
By the definitions of $G_{\alpha,v}$ ($G_{\beta,v}$ respectively)
(see equation~\eqref{eq:G}), for each $v\in \Omega$  we have 
$$
d\cdot \hhat_{v,\l}(\alpha) =    G_{\alpha,v}(\l)
= G_{\beta,v}(\l) = d\cdot \hhat_{v,\l}(\beta) \quad \text{for {\em all} $\l \in \C_v$.}
$$
This concludes our proof of Theorem~\ref{thm:iff}.
\end{proof}

%%%%%%%%%%%%%%%%%%%%%%%%%%%%%%%%%%%%%%%%%%%%%%%%%%%%%%%%%%%%%%%%%%%%%%%%%%%%%%
%%%%%%%%%%%%%%%%%%%%%%%%%%%%%%%%%%%%%%%%%%%%%%%%%%%%%%%%%%%%%%%%%%%%%%%%%%%%%%

\section{Proof of the precise relation between the starting points}
\label{sec:proof}

In this Section we prove the following result.

\begin{proposition}
\label{prop:main}
Let $L_0:=\Fpbar\left(t,t^{1/p},t^{1/p^2},\cdots, t^{1/p^n},\cdots \right)$ and let $L$ be a finite extension of $L_0$. We denote by $\Omega:=\Omega_L$ be the set of inequivalent places of $L$. We let $\alpha,\beta\in L$, not both of them contained in $\Fpbar$. Let $d\ge 2$ be an integer, which is not a power of the prime $p$. We let 
$$f_\lambda(x):=x^d+\lambda$$
be a family of polynomials parameterized by $\lambda\in \Lbar$. As in Section~\ref{sec:heights}, for each $\lambda\in\Lbar$ and for each place $v\in\Omega$, we let
$$\hhat_{v,\lambda}(\alpha)=\lim_{n\to\infty} \frac{\log^+\left|f_\lambda^n(\alpha)\right|_v}{d^n}\text{ and } \hhat_{v,\lambda}(\beta)=\lim_{n\to\infty} \frac{\log^+\left|f_\lambda^n(\beta)\right|_v}{d^n}.$$
If for each $\lambda\in \Lbar$ and for each place $v\in\Omega$, we have that 
\begin{equation}
\label{eq:hypothesis}
\hhat_{v,\lambda}(\alpha)=\hhat_{v,\lambda}(\beta), 
\end{equation}
then we must have that  $\alpha^d=\beta^d$. 
\end{proposition}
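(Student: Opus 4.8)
The plan is to exploit the hypothesis~\eqref{eq:hypothesis} by feeding it carefully chosen parameters $\lambda$ and reading off valuation information about $\alpha^d-\beta^d$ at each place $v\in\Omega$. First I would record the basic dichotomy coming from Lemma~\ref{lem:easy}: for a fixed place $v$ and a fixed $\gamma\in L$, the local canonical height $\hhat_{v,\lambda}(\gamma)$ is determined in an explicit closed form whenever $|\lambda|_v$ is large relative to $|\gamma|_v$ (namely $\frac{1}{d}\log|\lambda|_v$) or whenever both are small (namely $0$); the interesting regime is $|\lambda|_v$ comparable to $\max\{1,|\gamma|_v^d\}$. The key observation is that if $|\alpha|_v\ne|\beta|_v$ at some place $v$, then choosing $\lambda$ with $|\lambda|_v$ placed strictly between $|\alpha|_v^d$ and $|\beta|_v^d$ (and integral, or with controlled absolute value, at all other places — here one uses that $L$ is a product formula field so that such $\lambda\in\Lbar$ can be produced, e.g.\ by taking roots of $t$ or suitable polynomials in $t$) forces $\hhat_{v,\lambda}(\alpha)\ne\hhat_{v,\lambda}(\beta)$ via Lemma~\ref{lem:easy}~(ii)--(iii), contradicting~\eqref{eq:hypothesis}. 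Hence $|\alpha|_v=|\beta|_v$ for every $v\in\Omega$, and then by the product formula (applied to $\alpha/\beta$, once we've separately disposed of the case one of them is $0$) one already learns a great deal.

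Next I would sharpen this to control not just $|\alpha|_v$ versus $|\beta|_v$ but $|\alpha^d-\beta^d|_v$. The natural device is to compare $\hhat_{v,\lambda}(\alpha)$ and $\hhat_{v,\lambda}(\beta)$ for $\lambda$ chosen so that $f_\lambda(\alpha)=\alpha^d+\lambda$ and $f_\lambda(\beta)=\beta^d+\lambda$ have very different valuations: take $\lambda=-\alpha^d+\epsilon$ for suitably small $\epsilon$, so that $|f_\lambda(\alpha)|_v$ is tiny while $|f_\lambda(\beta)|_v=|\beta^d-\alpha^d+\epsilon|_v$ is governed by $|\alpha^d-\beta^d|_v$. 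Using~\eqref{eq:def_2} to push the height computation one step in, together with Lemma~\ref{lem:easy} applied at the shifted data $(f_\lambda(\alpha),\lambda)$ and $(f_\lambda(\beta),\lambda)$, one converts the equality $\hhat_{v,\lambda}(\alpha)=\hhat_{v,\lambda}(\beta)$ into an inequality on $|\alpha^d-\beta^d|_v$; ranging over the admissible $\epsilon$ (again produced using the product formula structure of $L$, with prescribed behaviour at finitely many places and integrality elsewhere) should pin down $|\alpha^d-\beta^d|_v\le 1$ — in fact something like: $\alpha^d-\beta^d$ is integral at every place of $\Omega$. At that point~\eqref{eq:finite_field} forces $\alpha^d-\beta^d\in\Fpbar$.

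The final step is to upgrade $\alpha^d-\beta^d\in\Fpbar$ to $\alpha^d=\beta^d$, and this is where the hypotheses ``$d$ is not a power of $p$'' and ``not both $\alpha,\beta\in\Fpbar$'' must enter — for otherwise cases~(1) and~(2) of Theorem~\ref{thm:main} would be genuine counterexamples to the stronger conclusion. Suppose $\alpha^d-\beta^d=c\in\Fpbar$ with $c\ne 0$. Since not both of $\alpha,\beta$ lie in $\Fpbar$, say $\alpha\notin\Fpbar$ (the other case is symmetric up to bookkeeping). I would then look at a place $v$ where $\alpha$ is not integral, or where the relevant ``ramification'' of the relation $\alpha^d=\beta^d+c$ is visible, and derive a contradiction from~\eqref{eq:hypothesis} by choosing $\lambda$ to resolve $\alpha$ and $\beta$ differently under one iterate: because $d\ne p^\ell$, the polynomial $x^d$ is not additive, so $\beta^d=(\alpha^d-c)$ does not translate cleanly, and the two orbits $\{f_\lambda^n(\alpha)\}$, $\{f_\lambda^n(\beta)\}$ must eventually split $v$-adically for a suitable $\lambda$; concretely, one picks $\lambda$ making $f_\lambda(\alpha)$ land in the ``escape'' region $|f_\lambda(\alpha)|_v^d>\max\{1,|\lambda|_v\}$ while $f_\lambda(\beta)$ stays bounded, which is possible precisely because $\beta^d\ne\alpha^d$ as functions of $t$ (not merely as elements differing by a constant in $\Fpbar$ — one needs that the difference genuinely changes the valuation at some place, which is guaranteed since $\alpha\notin\Fpbar$ and $d$ is coprime-ish to $p$ so $\alpha^d\notin\Fpbar$ either). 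I expect this last step to be the main obstacle: making the case analysis uniform across all places and cleanly exploiting ``$d\ne p^\ell$'' to rule out the affine-map degeneracy will require the careful valuation juggling that the paper advertises as its technical heart; the first two steps, by contrast, are fairly direct applications of Lemma~\ref{lem:easy} and the product formula.
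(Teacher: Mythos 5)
Your skeleton matches the paper's at the top level (exploit~\eqref{eq:hypothesis} for hand-picked $\lambda$'s, reduce to $v$-adic integrality of $\alpha^d-\beta^d$, invoke~\eqref{eq:finite_field} and the product formula, and use $d\ne p^\ell$ at the end), and your first step ($|\alpha|_v=|\beta|_v$ at every place, cf.\ Lemma~\ref{lem:2}) and the choice $\lambda=-\alpha^d$ (the paper's Lemma~\ref{lem:3}, giving $|\beta^d-\alpha^d|_v\le|\alpha|_v$) are fine. The genuine gap is in how you get from that bound down to $|\beta^d-\alpha^d|_v\le 1$. Your device ``$\lambda=-\alpha^d+\epsilon$ with suitably small $\epsilon$'' cannot do it: when $|\beta^d-\alpha^d|_v=|\alpha|_v$, a small perturbation leaves $|f_\lambda(\beta)|_v=|\alpha|_v$ and the question of whether $\beta$ escapes is decided by a \emph{cancellation at the second iterate}, i.e.\ by the residue of $(\beta^d-\alpha^d)/\alpha$, not by the size of $\epsilon$. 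The paper has to take $\epsilon=\gamma_1\alpha$ of full size $|\alpha|_v$, with $\gamma_1\in\Fpbar$ chosen so that $(\gamma_0+\gamma_1)^d=1$ but $\gamma_1^d\ne 1$ (Claims~\ref{claim:key} and~\ref{claim:1}), so that the second iterate of $\alpha$ escapes while the second iterate of $\beta$ cancels; one must then still control all later iterates of $\beta$ (the bound~\eqref{eq:7}). None of this is in your plan, and ``ranging over admissible $\epsilon$'' does not substitute for it.

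Your final step has a concrete flaw as stated: once $\alpha^d-\beta^d=c\in\Fpbar^*$, we have $f_\lambda(\alpha)-f_\lambda(\beta)=c$ with $|c|_v=1$, so $|f_\lambda(\alpha)|_v=|f_\lambda(\beta)|_v$ whenever either exceeds $1$; you can never make ``$f_\lambda(\alpha)$ land in the escape region while $f_\lambda(\beta)$ stays bounded'' at the first iterate. The divergence must be manufactured at the second iterate from the cross terms of $(x+c)^d-x^d$, and this is exactly where $d\ne p^\ell$ enters quantitatively: writing $d=sp^\ell$ with $p\nmid s$, the paper takes $\lambda_2=\alpha-\alpha^d$ (so $\alpha$ is fixed, hence $\hhat_{v,\lambda_2}(\alpha)=0$) and shows via the expansion of $\bigl(\epsilon^{p^\ell}+\alpha^{p^\ell}\bigr)^s$ that the term $s\,\alpha^{(s-1)p^\ell}\epsilon^{p^\ell}$ dominates precisely because $s\ge 2$ and $|s|_v=1$ (Lemma~\ref{lem:5}); your appeal to ``$x^d$ is not additive'' gestures at this but supplies no mechanism. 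Finally, you do not treat $d=2$ separately: there the best one gets from this argument is $|\beta^2-\alpha^2|_v\le 1$, so the product formula alone does not finish, and the paper needs the extra computation with $f^3_{\lambda_2}(\beta)$, the resulting equation $4\epsilon^2+4\epsilon=0$, and the symmetry trick swapping $\alpha$ and $\beta$ to exclude $\epsilon=-1$ (Lemma~\ref{lem:d=2}). As an aside, your global constructions of $\lambda$ ``with prescribed behaviour at finitely many places'' are unnecessary: the hypothesis holds for every $\lambda\in\Lbar$ and every $v$, so one may choose $\lambda$ adapted to a single place at a time, as the paper does.
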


Proposition~\ref{prop:main} constitutes the bridge in our arguments between Theorem~\ref{thm:iff} and Theorem~\ref{thm:main} (see also Theorem~\ref{thm:main_2}).

%%%%%%%%%%%%%%%%%%%%%%%%%%%%%%%%%%%%%%%%%%%%%%%%%%%%%%%%%%%%%%%%%%%%%%%%%%%%%%%

\subsection{The strategy for proving Proposition~\ref{prop:main}}  
From now on, we work under the hypotheses from Proposition~\ref{prop:main}. We split its proof into   Subsections~\ref{subsec:0},~\ref{subsec:first},~\ref{subsec:second}~and~\ref{subsec:third}.

So, we let $S$ be the (finite) set of places $v\in\Omega$ with the property that 
\begin{equation}
\label{eq:2112}
\max\{|\alpha|_v,|\beta|_v\}>1. 
\end{equation}
Note that our hypothesis from Proposition~\ref{prop:main} that not both $\alpha$ and $\beta$ live in $\Fpbar$  yields that $S$ is a \emph{nonempty} set. Our strategy will be to prove that 
\begin{equation}
\label{eq:first_step}
\left|\alpha^d-\beta^d\right|_v\le 1\text{ for each }v\in S.
\end{equation}
Indeed, since $S$ consists of all the places $v$ where $\alpha$ or $\beta$ is  not $v$-adic integral (see inequality~\eqref{eq:2112}), then the only places of $\Omega$ for which $\alpha^d-\beta^d$ may not be a $v$-adic integer are exactly the ones from the set $S$. So, inequality~\eqref{eq:first_step} would prove that $\alpha^d-\beta^d$ is integral at each place $v\in\Omega$. Due to the product formula~\eqref{eq:product_formula} on $L$ (see also \eqref{eq:finite_field}), this means that $\alpha^d-\beta^d\in\Fpbar$, which is sufficient to deduce that $\alpha^d=\beta^d$ if $d=2$ (see Lemma~\ref{lem:d=2}). Now, in the case $d>2$, we can actually prove that the inequality in~\eqref{eq:first_step} is strict; this is sufficient to deduce that $\alpha^d=\beta^d$ (see Lemma~\ref{lem:d>2}). We also note (see Remark~\ref{rem:not_p_power}) that it is exactly in the last part of our argument (the proof of Lemma~\ref{lem:5}) where we employ the hypothesis from Proposition~\ref{prop:main} that $d$ is not a power of $p$.

In order to deduce the inequality~\eqref{eq:first_step}, we employ the hypothesis~\eqref{eq:hypothesis} from Proposition~\ref{prop:main} for various well-chosen $\lambda$'s in $\Lbar$. Also, we first prove that
\begin{equation}
\label{eq:second_step}
\left|\alpha^d-\beta^d\right|_v\le |\alpha|_v=|\beta|_v\text{ for each }v\in S;
\end{equation}
this is done in Subsection~\ref{subsec:0}.

%%%%%%%%%%%%%%%%%%%%%%%%%%%%%%%%%%%%%%%%%%%%%%%%%%%%%%%%%%%%%%%%%%%%%%%%%%%%%%%%

\subsection{First step in the proof of Proposition~\ref{prop:main}}
\label{subsec:0}

In this Subsection, we will establish~\eqref{eq:second_step}. We first prove the following easy fact which will be used repeatedly in our proof of Proposition~\ref{prop:main}.
\begin{lemma}
\label{lem:2}
For each place $v\in S$, we have $|\alpha|_v=|\beta|_v>1$.
\end{lemma}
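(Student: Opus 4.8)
The plan is to exploit hypothesis~\eqref{eq:hypothesis} at the place $v\in S$ by feeding in parameters $\lambda$ chosen so that the local canonical heights $\hhat_{v,\lambda}(\alpha)$ and $\hhat_{v,\lambda}(\beta)$ become explicitly computable via Lemma~\ref{lem:easy}, and then comparing. First I would observe that $v\in S$ means $\max\{|\alpha|_v,|\beta|_v\}>1$; without loss of generality say $|\alpha|_v\ge|\beta|_v$, so $|\alpha|_v>1$. The natural first move is to pick $\lambda$ with $|\lambda|_v$ very large, say $|\lambda|_v>\max\{1,|\alpha|_v^d,|\beta|_v^d\}$: by Lemma~\ref{lem:easy}~(iii) this gives $\hhat_{v,\lambda}(\alpha)=\hhat_{v,\lambda}(\beta)=\log|\lambda|_v/d$, which is consistent with~\eqref{eq:hypothesis} but yields nothing. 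So instead I would try to choose $\lambda$ in the \emph{transition regime} where the two heights would differ unless $|\alpha|_v=|\beta|_v$.

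The key idea: suppose for contradiction $|\alpha|_v>|\beta|_v$. Choose $\lambda\in\Lbar$ with $|\beta|_v^d\le|\lambda|_v<|\alpha|_v^d$ and $|\lambda|_v>1$ (possible since the value group, after passing to $\Lbar=L^{\mathrm{sep}}$, is divisible and dense enough, or simply because $|\alpha|_v^d/|\beta|_v^d>1$ gives room — one can even take $\lambda$ to be a suitable root of an element of $L$). Then $|\alpha|_v^d>\max\{1,|\lambda|_v\}$, so by Lemma~\ref{lem:easy}~(ii) we get $\hhat_{v,\lambda}(\alpha)=\log|\alpha|_v>0$. On the other hand $|\lambda|_v\ge|\beta|_v^d$, so $|\beta^d+\lambda|_v\le|\lambda|_v$ and in fact if $|\lambda|_v>|\beta|_v^d$ strictly then $|f_\lambda(\beta)|_v=|\lambda|_v$; combined with~\eqref{eq:def_2} and Lemma~\ref{lem:easy}, this forces $\hhat_{v,\lambda}(\beta)\le \log|\lambda|_v/d < \log|\alpha|_v = \hhat_{v,\lambda}(\alpha)$ (using $|\lambda|_v<|\alpha|_v^d$), contradicting~\eqref{eq:hypothesis}. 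Hence $|\alpha|_v=|\beta|_v$, and since $v\in S$ this common value exceeds $1$, giving the claim. The one subtlety is ensuring a parameter $\lambda$ with the prescribed valuation exists in $\Lbar$; this is fine because $\Lbar$ is algebraically closed, so its value group is divisible, hence any positive real that is a rational power of a value in $\Omega_L$ is attained, and the open interval $[|\beta|_v^d,|\alpha|_v^d)$ certainly contains such a value once $|\alpha|_v>|\beta|_v$.

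I expect the main obstacle to be purely bookkeeping: making sure the inequalities are strict where needed (e.g. choosing $|\lambda|_v$ strictly between $|\beta|_v^d$ and $|\alpha|_v^d$, and handling the boundary case $|\lambda|_v=|\beta|_v^d$ where $|\beta^d+\lambda|_v$ could drop) and confirming the existence of $\lambda$ with the desired valuation. Everything else follows mechanically from Lemma~\ref{lem:easy} and the functional equation~\eqref{eq:def_2}. This lemma is the first rung in establishing~\eqref{eq:second_step}, which refines it to the bound $|\alpha^d-\beta^d|_v\le|\alpha|_v=|\beta|_v$; that refinement will come from a similar but more delicate choice of $\lambda$ using the nonarchimedean triangle inequality on $\alpha^d+\lambda$ versus $\beta^d+\lambda$.
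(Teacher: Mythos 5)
Your argument is correct in substance, but it takes a heavier route than the paper. You specialize the hypothesis~\eqref{eq:hypothesis} at a parameter $\lambda$ in the ``transition regime'' $\max\{1,|\beta|_v^d\}<|\lambda|_v<|\alpha|_v^d$ (assuming for contradiction $|\alpha|_v>|\beta|_v$), compute $\hhat_{v,\lambda}(\alpha)=\log|\alpha|_v$ via Lemma~\ref{lem:easy}~(ii) and $\hhat_{v,\lambda}(\beta)=\log|\lambda|_v/d$ via Lemma~\ref{lem:easy}~(iii), and compare; this works, provided you take the strict inequality $|\lambda|_v>\max\{1,|\beta|_v^d\}$ (as you yourself flag, the boundary case $|\lambda|_v=|\beta|_v^d$ is not covered by Lemma~\ref{lem:easy} and should simply be avoided), and provided you justify the existence of such a $\lambda\in\Lbar$, which indeed follows since the value group of $\Lbar$ at $v$ is divisible and contains $|\alpha|_v>1$, hence is dense in $\R_{>0}$. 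The paper's proof dispenses with all of this by the single choice $\lambda=0$: then Lemma~\ref{lem:easy}~(i)--(ii) give $\hhat_{v,0}(\gamma)=\log^+|\gamma|_v$ for any $\gamma$, so~\eqref{eq:hypothesis} at $\lambda=0$ immediately forces $\log^+|\alpha|_v=\log^+|\beta|_v$, and since $v\in S$ (see~\eqref{eq:2112}) the common value is positive, i.e.\ $|\alpha|_v=|\beta|_v>1$. The trade-off: your choice of $\lambda$ requires an existence/density argument and a case split that $\lambda=0$ renders unnecessary, but the transition-regime idea is exactly the kind of parameter selection used later in the paper (Lemmas~\ref{lem:3}--\ref{lem:5}), so nothing in your method is wasted.
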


\begin{proof}[Proof of Lemma~\ref{lem:2}.]
The desired conclusion is an easy corollary of Lemma~\ref{lem:easy},~parts~(i)-(ii) using $\lambda=0$ and $v\in S$ (see~\eqref{eq:2112}), along with the hypothesis~\eqref{eq:hypothesis} of Proposition~\ref{prop:main}. 
\end{proof}

\begin{corollary}
\label{cor:easy}
With the hypothesis as in Proposition~\ref{prop:main}, we have that neither $\alpha$ nor $\beta$ live in $\Fpbar$.
\end{corollary}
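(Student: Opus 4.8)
\textbf{Proof plan for Corollary~\ref{cor:easy}.}

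The plan is to combine Lemma~\ref{lem:2} with the product formula on $L$ together with the characterization~\eqref{eq:finite_field} of the elements of $L$ that are integral at every place. By Lemma~\ref{lem:2}, for each place $v\in S$ we have $|\alpha|_v=|\beta|_v>1$; in particular, $S$ is exactly the set of places where $\alpha$ fails to be $v$-adic integral, and it is simultaneously the set of places where $\beta$ fails to be $v$-adic integral. Now suppose, for contradiction, that $\alpha\in\Fpbar$. Then $\alpha$ is integral at every place $v\in\Omega$ (this is part of the defining property of $\Fpbar$ inside a product formula field, e.g.\ because every element of $\Fpbar$ is a root of unity or zero, hence has $v$-adic absolute value $1$ or $0$ at each $v$), so $|\alpha|_v\le 1$ for all $v\in\Omega$. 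This forces $S=\emptyset$, contradicting the hypothesis of Proposition~\ref{prop:main} that not both $\alpha$ and $\beta$ lie in $\Fpbar$, which (as noted right after~\eqref{eq:2112}) guarantees $S\neq\emptyset$. Hence $\alpha\notin\Fpbar$, and by the symmetry of Lemma~\ref{lem:2} (which gives $|\alpha|_v=|\beta|_v$ on all of $S$) the same argument shows $\beta\notin\Fpbar$.

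Alternatively, and perhaps more cleanly, one argues directly from~\eqref{eq:finite_field}: if $\alpha\in\Fpbar$ then $|\alpha|_v\le 1$ for every $v\in\Omega$, so for each $v\in S$ Lemma~\ref{lem:2} gives $1<|\beta|_v=|\alpha|_v\le 1$, absurd unless $S=\emptyset$; but $S\neq\emptyset$. Symmetrically $\beta\notin\Fpbar$. Thus neither $\alpha$ nor $\beta$ lies in $\Fpbar$, as claimed.

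I do not anticipate any genuine obstacle here: the corollary is an immediate upgrade of the hypothesis "not both $\alpha,\beta\in\Fpbar$" to "neither $\alpha$ nor $\beta\in\Fpbar$", and the only input beyond the hypothesis is Lemma~\ref{lem:2}, which pins the absolute values of $\alpha$ and $\beta$ to be equal (and $>1$) at every place of $S$. The one point to state carefully is why membership in $\Fpbar$ forces integrality at every place of $\Omega$; this is exactly the content of~\eqref{eq:finite_field} read in the contrapositive direction (elements of $\Fpbar$ satisfy $|x|_v\le 1$ for all $v$), which holds because $\Fpbar$ is the field of constants and every nonzero element is a root of unity.
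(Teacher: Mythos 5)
Your proposal is correct and follows essentially the same route as the paper: the paper likewise deduces from the equality $|\alpha|_v=|\beta|_v>1$ at the (nonempty) places of $S$ that \emph{both} points fail to be integral there, hence neither can lie in $\Fpbar$. The only cosmetic difference is that you cite Lemma~\ref{lem:2} explicitly and spell out why elements of $\Fpbar$ are integral at every place, which the paper leaves implicit.
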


\begin{proof}
Indeed, Lemma~\ref{lem:easy} shows that \emph{both} $\alpha$ and $\beta$ are not integral at the places from $S$; hence neither one can live in $\Fpbar$. 
\end{proof}
In particular, Corollary~\ref{cor:easy} yields that $\alpha$ and $\beta$ are nonzero. 

Next Lemma will finish the proof for the assertion from~\eqref{eq:second_step}.

\begin{lemma}
\label{lem:3}
For each place $v$ in $S$, we have that $|\beta^d-\alpha^d|_v\le |\alpha|_v$.
\end{lemma}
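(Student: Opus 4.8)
\textbf{Proof plan for Lemma~\ref{lem:3}.}

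The plan is to feed the hypothesis~\eqref{eq:hypothesis} into the very concrete formulas of Lemma~\ref{lem:easy} for a carefully chosen parameter $\lambda$, and read off the inequality from comparing $v$-adic sizes. Fix $v\in S$; by Lemma~\ref{lem:2} we know $|\alpha|_v=|\beta|_v>1$, so set $r:=|\alpha|_v=|\beta|_v>1$. The natural choice is $\lambda:=-\alpha^d$, so that $f_\lambda(\alpha)=\alpha^d+\lambda=0$, and hence $\hhat_{v,\lambda}(\alpha)=0$ (since $f_\lambda^n(\alpha)=f_\lambda^{n-1}(0)$ and, once we check $|\lambda|_v$ is not too large, the orbit of $0$ stays $v$-adically bounded). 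First I would pin down $|\lambda|_v=|\alpha|_v^d=r^d>1$. Then $f_\lambda(\beta)=\beta^d-\alpha^d$, and by~\eqref{eq:def_2} we have $\hhat_{v,\lambda}(\beta)=\frac{1}{d}\hhat_{v,\lambda}(\beta^d-\alpha^d)$, while $\hhat_{v,\lambda}(\alpha)=\frac1d\hhat_{v,\lambda}(f_\lambda(\alpha))=\frac1d\hhat_{v,\lambda}(0)$.

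The heart of the argument is to show $\hhat_{v,\lambda}(0)=0$ for this $\lambda$, and then to use $\hhat_{v,\lambda}(\alpha)=\hhat_{v,\lambda}(\beta)$ to conclude $\hhat_{v,\lambda}(\beta^d-\alpha^d)=0$ as well. To get $\hhat_{v,\lambda}(0)=0$: apply Lemma~\ref{lem:easy}~(i)/(ii) to the point $f_\lambda(0)=\lambda$, i.e.\ track the orbit $0\mapsto\lambda\mapsto\lambda^d+\lambda\mapsto\cdots$. Since $|\lambda|_v=r^d$ and $|\lambda|_v^d=r^{d^2}>\max\{1,|\lambda|_v\}=r^d$ (as $r>1$, $d\ge 2$), part~(ii) applied with $\gamma=\lambda$ gives $\hhat_{v,\lambda}(\lambda)=\log|\lambda|_v$, hence by~\eqref{eq:def_2}, $\hhat_{v,\lambda}(0)=\frac1d\log|\lambda|_v=\log r\ne 0$ — wait, that is \emph{not} zero, so the naive choice $\lambda=-\alpha^d$ does not give $\hhat_{v,\lambda}(\alpha)=0$; instead it gives $\hhat_{v,\lambda}(\alpha)=\frac{1}{d}\cdot\frac{1}{d}\log|\lambda|_v=\frac{\log r}{d}$ after two applications of~\eqref{eq:def_2}. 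That is fine: the point is not that the common value is zero but that both sides are computable. So the real computation is: $\hhat_{v,\lambda}(\alpha)=\frac{1}{d^2}\log|\lambda|_v=\frac{\log|\alpha|_v^d}{d^2}=\frac{\log|\alpha|_v}{d}$, and therefore $\hhat_{v,\lambda}(\beta^d-\alpha^d)=d\cdot\hhat_{v,\lambda}(\beta)=d\cdot\hhat_{v,\lambda}(\alpha)=\log|\alpha|_v=\log r>0$.

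Finally I would translate $\hhat_{v,\lambda}(\beta^d-\alpha^d)=\log r$ into the size bound. Write $\eta:=\beta^d-\alpha^d$ and $|\eta|_v=s$. If $s^d>\max\{1,|\lambda|_v\}=\max\{1,r^d\}=r^d$, i.e.\ $s>r$, then Lemma~\ref{lem:easy}~(ii) gives $\hhat_{v,\lambda}(\eta)=\log s>\log r$, a contradiction. If on the other hand $s\le 1$, then $\hhat_{v,\lambda}(\eta)=\frac1d\hhat_{v,\lambda}(f_\lambda(\eta))$ with $|f_\lambda(\eta)|_v=|\eta^d+\lambda|_v=|\lambda|_v=r^d$ (using $|\eta^d|_v=s^d\le 1<r^d$), so again by part~(ii) applied to $f_\lambda(\eta)$ (valid since $r^{d^2}>r^d$) we get $\hhat_{v,\lambda}(\eta)=\frac1d\log(r^d)=\log r$, which is consistent. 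The remaining range $1<s\le r$ needs to be shown to force $\hhat_{v,\lambda}(\eta)=\log r$ too (or more directly, one should check that $\hhat_{v,\lambda}(\eta)=\log r$ is compatible precisely with $s\le r$ and forces a contradiction when $s>r$): when $1<s\le r$ we have $s^d\le r^d=|\lambda|_v$, so $|f_\lambda(\eta)|_v=|\eta^d+\lambda|_v$ is $r^d$ if $s<r$ (strict-triangle) and $\le r^d$ if $s=r$; in either case part~(ii) applied to $f_\lambda(\eta)$ yields $\hhat_{v,\lambda}(\eta)=\log r$, consistent. Hence the only obstruction is $s>r$, which is excluded; therefore $s=|\beta^d-\alpha^d|_v\le r=|\alpha|_v$, as claimed. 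The main obstacle I anticipate is the bookkeeping in the borderline case $s=r$ (where $|\eta^d+\lambda|_v$ could drop), which I would handle by noting that even so $\hhat_{v,\lambda}(\eta)\le\log r$ is automatic from the general upper bound $\hhat_{v,\lambda}(x)\le\log^+|x|_v+O(1)$ combined with~\eqref{eq:def_2}, so no contradiction arises and the inequality $s\le r$ still holds.
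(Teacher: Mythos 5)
Your argument is correct and is essentially the paper's own proof: you pick the same parameter $\lambda_0=-\alpha^d$, compute $\hhat_{v,\lambda_0}(\alpha)=\tfrac{\log|\alpha|_v}{d}$ via the orbit $\alpha\mapsto 0\mapsto-\alpha^d$ together with Lemma~\ref{lem:easy}~(ii) and \eqref{eq:def_2}, and then rule out $|\beta^d-\alpha^d|_v>|\alpha|_v$ by applying Lemma~\ref{lem:easy}~(ii) to $f_{\lambda_0}(\beta)=\beta^d-\alpha^d$, exactly as in the paper. The extra consistency checks for $s\le 1$ and $1<s\le r$ (and the appeal to a general bound $\hhat_{v,\lambda}(x)\le\log^+|x|_v+O(1)$) are superfluous, since the contradiction in the case $s>r$ alone proves the lemma.
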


\begin{proof}[Proof of Lemma~\ref{lem:3}.]
We argue by contradiction and so, we assume that $|\beta^d-\alpha^d|_v> |\alpha|_v$ and we will derive a contradiction. 

Indeed, we consider $\lambda_0:=-\alpha^d$ and then apply Lemma~\ref{lem:easy}  for $\lambda_0$ and $\gamma:=f_{\lambda_0}(\beta)=\beta^d-\alpha^d$; since $|\beta^d-\alpha^d|_v^d>|\lambda_0|_v=|\alpha|_v^d$ (according to our assumption), Lemma~\ref{lem:easy}~(ii) yields that
$$\hhat_{v,\lambda_0}(\gamma)=\log\left|\beta^d-\alpha^d\right|_v.$$
But then using the fact that $\hhat_{v,\lambda_0}(\beta)=\frac{\hhat_{v,\lambda_0}(\gamma)}{d}$ (see equation~\eqref{eq:def_2}), we obtain that
\begin{equation}
\label{eq:1}
\hhat_{v,\lambda_0}(\beta)=\frac{\log|\beta^d-\alpha^d|_v}{d}.
\end{equation}
On the other hand, we compute
$$f_{\lambda_0}(\alpha)=0\text{ and }f^2_{\lambda_0}(\alpha)=-\alpha^d.$$
Then using again Lemma~\ref{lem:easy}~(ii), this time for $\lambda_0$ and $-\alpha^d$ (note that $|\alpha^d|^d_v>|\alpha^d|_v$), we conclude that 
$$\hhat_{v,\lambda_0}(-\alpha^d)=\log|\alpha^d|_v.$$
But then again using equation~\eqref{eq:def_2}, we get that
\begin{equation}
\label{eq:2}
\hhat_{v,\lambda_0}(\alpha)=\frac{\hhat_{v,\lambda_0}(f^2_{\lambda_0}(\alpha))}{d^2} = \frac{\log|\alpha|_v}{d}.
\end{equation}
However, our assumption that $|\alpha^d-\beta^d|_v>|\alpha|_v$ coupled with equations~\eqref{eq:1}~and~\eqref{eq:2} contradict the main hypothesis from our Proposition~\ref{prop:main} that $\hhat_{v,\lambda_0}(\alpha)=\hhat_{v,\lambda_0}(\beta)$. In conclusion, indeed, we must have that 
\begin{equation}
\label{eq:3}
|\alpha^d-\beta^d|_v\le |\alpha|_v=|\beta|_v,
\end{equation}
for each place $v$ in $S$. 
\end{proof}

%%%%%%%%%%%%%%%%%%%%%%%%%%%%%%%%%%%%%%%%%%%%%%%%%%%%%%%%%%%%%%%%%%%%%%%%%%%%%%%

\subsection{Second step in the proof of Proposition~\ref{prop:main}}
\label{subsec:first}

The inequality from equation~\eqref{eq:second_step} says that $|\alpha^d-\beta^d|_v$ is \emph{much smaller} than one would expect it to be; i.e., since $|\alpha|_v=|\beta|_v>1$ (for $v\in S$), then \emph{normally} one would expect $|\alpha^d-\beta|_v$ to be \emph{larger} than $|\alpha|_v=|\beta|_v$. The next Lemma refines further the inequality from~\eqref{eq:second_step} showing that we actually have a \emph{strict}  inequality in \eqref{eq:second_step}. 
\begin{lemma}
\label{lem:4}
For each place $v$ in $S$, we have that $|\beta^d-\alpha^d|_v<|\alpha|_v$.
\end{lemma}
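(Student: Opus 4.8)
\textbf{Proof plan for Lemma~\ref{lem:4}.}

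The plan is to run the same contradiction scheme as in Lemma~\ref{lem:3}, but now feeding it the stronger information from~\eqref{eq:second_step}, which forces the ``bad'' valuation to be strictly smaller. So fix $v\in S$, set $\delta:=\beta^d-\alpha^d$, and suppose toward a contradiction that $|\delta|_v=|\alpha|_v=|\beta|_v$ (equality is the only remaining case after~\eqref{eq:second_step}). The idea is to choose a parameter $\lambda$ that separates the orbit of $\alpha$ from the orbit of $\beta$ at the place $v$, so that~\eqref{eq:hypothesis} is violated. The natural candidate is again a parameter built from $\alpha$: take $\lambda_0:=-\alpha^d$, so that $f_{\lambda_0}(\alpha)=0$, $f_{\lambda_0}^2(\alpha)=-\alpha^d$, and (by the computation already in Lemma~\ref{lem:3}) $\hhat_{v,\lambda_0}(\alpha)=\tfrac{\log|\alpha|_v}{d}$. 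On the $\beta$ side, $f_{\lambda_0}(\beta)=\delta$, and now $|\delta|_v=|\alpha|_v$, so $|\delta|_v^d=|\alpha|_v^d=|\lambda_0|_v$; this is the \emph{boundary} case of Lemma~\ref{lem:easy} and we must look one iterate further. Compute $f_{\lambda_0}^2(\beta)=\delta^d-\alpha^d$: if $|\delta^d-\alpha^d|_v>|\alpha|_v^{d}$ we can apply Lemma~\ref{lem:easy}~(ii) to get $\hhat_{v,\lambda_0}(\beta)=\tfrac{\log|\delta^d-\alpha^d|_v}{d^2}$, which is $>\tfrac{\log|\alpha|_v}{d}$ once $d\ge 2$ and $|\delta^d-\alpha^d|_v>|\alpha|_v^d$ — contradicting~\eqref{eq:hypothesis}.

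The delicate sub-case is when $|\delta^d-\alpha^d|_v\le |\alpha|_v^d$, i.e. the leading terms of $\delta^d$ and $\alpha^d$ cancel. Here I would exploit the freedom to perturb $\lambda_0$: instead of $-\alpha^d$, use $\lambda_0:=c-\alpha^d$ for a cleverly chosen $c\in\Lbar$ with $|c|_v$ controlled (for instance $c$ a suitable power of a uniformizer, or a root of unity times such), so that $f_{\lambda_0}(\alpha)=c$ and $f_{\lambda_0}(\beta)=\delta+c$, breaking the accidental cancellation while keeping $|\lambda_0|_v=|\alpha|_v^d$ so that the $\alpha$-orbit computation is unchanged. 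One can also try $\lambda_0:=-\beta^d$ (symmetric role), giving $f_{\lambda_0}(\beta)=0$ and $f_{\lambda_0}(\alpha)=-\delta$, and compare; at least one of the two choices, or a one-parameter family interpolating them, should avoid the cancellation and produce the strict inequality $\hhat_{v,\lambda_0}(\beta)>\hhat_{v,\lambda_0}(\alpha)$. Since $L$ is large (a finite extension of a perfect closure of $\Fpbar(t)$), there is plenty of room at each place $v$ to pick such auxiliary constants $c$.

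The main obstacle I anticipate is precisely handling this cancellation sub-case cleanly: one has to argue that the ``resonance'' $|\delta^d-\alpha^d|_v\le|\alpha|_v^d$ cannot persist for \emph{all} admissible perturbations of the parameter, and organizing the case analysis on how much cancellation occurs (how the valuation of $\delta^d-\alpha^d$ compares with $|\alpha|_v^d$, $|\alpha|_v^{d-1}$, etc.) without an infinite regress. I expect the write-up to isolate a single well-chosen $\lambda$ — most likely $\lambda_0=-\alpha^d$ handled by passing to $f_{\lambda_0}^2(\beta)$ and then, if needed, $f_{\lambda_0}^3(\beta)$ — together with the crude bounds from Lemma~\ref{lem:easy} to conclude. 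Everything else (the final strict inequality, the contradiction with~\eqref{eq:hypothesis}) is then routine given Lemma~\ref{lem:easy} and equation~\eqref{eq:def_2}.
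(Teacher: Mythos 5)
There is a genuine gap, and it sits exactly where you locate the difficulty. First, your ``easy case'' is vacuous: since $|\delta|_v=|\alpha|_v$ by Lemma~\ref{lem:2} and Lemma~\ref{lem:3}, the ultrametric inequality gives $|\delta^d-\alpha^d|_v\le\max\{|\delta|_v^d,|\alpha|_v^d\}=|\alpha|_v^d$, so the case $|\delta^d-\alpha^d|_v>|\alpha|_v^d$ never occurs and the whole proof rests on the ``delicate sub-case.'' Second, in that sub-case your plan points in the wrong direction. With any parameter of absolute value $|\alpha|_v^d$ and a starting point of absolute value at most $|\alpha|_v$, the local height is automatically at most $\tfrac{\log|\alpha|_v}{d}$, with equality exactly when no cancellation occurs along the orbit; so the only way to contradict $\hhat_{v,\lambda}(\alpha)=\hhat_{v,\lambda}(\beta)$ is to \emph{force} cancellation in the $\beta$-orbit while \emph{preventing} it in the $\alpha$-orbit (cancellation pushes the height strictly below $\tfrac{\log|\alpha|_v}{d}$, via an orbit-growth bound such as \eqref{eq:7}). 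If $|\delta^d-\alpha^d|_v<|\alpha|_v^d$ already, then $\lambda_0=-\alpha^d$ by itself yields the contradiction; the truly problematic case is $|\delta^d-\alpha^d|_v=|\alpha|_v^d$ (no cancellation), where there is nothing to ``break,'' and your scheme of perturbing $\lambda_0$ to restore non-cancellation on the $\beta$-side would only re-equalize the two heights.

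The missing idea is the precise algebraic choice of the perturbation, which is where the paper's work lies. Writing $\gamma_0\in\Fpbar^*$ for the residue of $(\beta^d-\alpha^d)/\alpha$ (Claim~\ref{claim:key}), the paper takes $\lambda_1=\gamma_1\alpha-\alpha^d$ with $\gamma_1\in\Fpbar$ chosen so that $(\gamma_0+\gamma_1)^d=1$ but $\gamma_1^d\ne 1$ (Claim~\ref{claim:1}, a counting argument on $d$-th roots of unity using that their number is prime to $p$); the first condition creates cancellation in $f_{\lambda_1}^2(\beta)$ (Claim~\ref{claim:key_2}), the second prevents it in $f_{\lambda_1}^2(\alpha)$, and then \eqref{eq:7} converts the one-step saving into $\hhat_{v,\lambda_1}(\beta)<\hhat_{v,\lambda_1}(\alpha)$. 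Your proposed perturbations by ``a suitable power of a uniformizer'' provably change nothing: a summand $c$ with $|c|_v<|\alpha|_v$ does not alter the residue of $(\delta+c)/\alpha$, so the cancellation pattern is unchanged; and a $c$ with $|c|_v=|\alpha|_v$ works only if its residue satisfies exactly the two conditions above, whose existence is the nontrivial content (and taking $-\beta^d$ instead of $-\alpha^d$ runs into the symmetric obstruction). So the overall contradiction framework and the use of Lemma~\ref{lem:easy} with \eqref{eq:def_2} are right, but the proposal as written does not contain the construction that makes the argument go through.
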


\begin{proof}[Proof of Lemma~\ref{lem:4}.]
We argue by contradiction and therefore, assume that $|\alpha^d-\beta^d|_v\ge |\alpha|_v$. Then Lemma~\ref{lem:3} yields that actually $|\alpha^d-\beta^d|_v=|\alpha|_v$.
\begin{claim}
\label{claim:key}
There exists some nonzero $\gamma_0\in\Fpbar$ such that 
\begin{equation}
\label{eq:4}
\left|\beta^d-\alpha^d-\gamma_0\cdot\alpha\right|_v<|\alpha|_v.
\end{equation}
\end{claim}
The existence of $\gamma_0$ as in the conclusion of Claim~\ref{claim:key} is essential in the proof of Lemma~\ref{lem:4}. Furthermore, the argument used in the proof of Claim~\ref{claim:key} will also be useful in our further arguments for proving Lemma~\ref{lem:4}.

\begin{proof}[Proof of Claim~\ref{claim:key}.]
Let $K=\Fpbar(t,\alpha,\beta)$; then $K$ is a subfield of $L$. Moreover, $K$ is a global function field (of transcendence degree $1$) over $\Fpbar$ (it is a finite extension of $\Fpbar(t)$). We let $\mathcal{O}_{K,v}$ be the ring of $v$-adic integers in $K$; then $\mathcal{O}_{K,v}$ is a discrete valuation ring.  We let $\pi_v\in \mathcal{O}_{K,v}\subset L$ be a \emph{uniformizer} for the restriction of $|\cdot |_v$ on  $K$, i.e.,  
$$
|\pi_v|_v=\max\left\{|x|_v\colon |x|_v<1\text{ and }x\in K\right\}. 
$$ 
So,  there exists some positive integer $e$ with the property that both $(\beta^d-\alpha^d)\cdot \pi_v^e$ and $\alpha\cdot \pi_v^e$ are $v$-adic units in $K$ (note that $e>0$ since $|\beta^d-\alpha^d|_v=|\alpha|_v>1$). We let ${\rm red}_v:\mathcal{O}_{K,v}\lra \Fpbar$ be the reduction map at the place $v$; then we simply compute  
\begin{equation}
\label{eq:explanation_Fpbar}
\gamma_0:=\frac{{\rm red}_v\left(\left(\beta^d-\alpha^d\right)\cdot \pi_v^e\right)}{{\rm red}_v\left(\alpha\cdot \pi_v^e\right)}\in\Fpbar^*,  
\end{equation}
which satisfies the desired conclusion from Claim~\ref{claim:key}.
\end{proof}

So, we let $\gamma_0\in\Fpbar$ as in the conclusion of Claim~\ref{claim:key}.  
\begin{claim}
\label{claim:1}
With the above notation, there exists some $\gamma_1\in\Fpbar$ such that 
\begin{equation}
\label{eq:condition}
(\gamma_1+\gamma_0)^d=1\text{ and }\gamma_1^d\ne 1.
\end{equation}
\end{claim}

\begin{proof}[Proof of Claim~\ref{claim:1}.]
We argue by contradiction and therefore, we assume that for each $u\in\Fpbar$ such that $u^d=1$, we have that also $(u-\gamma_0)^d=1$. This means that the $d$-th roots of unity in $\Fpbar$ can be grouped in disjoint subsets of $p$ elements (note that $\gamma_0\ne 0$):
$$u,u-\gamma_0,u-2\gamma_0,\cdots,u-(p-1)\gamma_0.$$
However, this would mean that there are $p\cdot k$ solutions (for some positive integer $k$) for the equation $x^d=1$ in $\Fpbar$. This is a contradiction, because the equation $x^d=1$ has $s$ solutions in $\Fpbar$, where $d$ is written as $s\cdot p^\ell$ for some integer $\ell\ge 0$ and some positive integer $s$ coprime with $p$. The fact that $p$ does not divide $s$ shows that indeed one can find some $\gamma_1$ satisfying conditions~\eqref{eq:condition}.
\end{proof}

Then we consider (with $\gamma_1$ satisfying the conclusion of Claim~\ref{claim:1}) 
$$\lambda_1:=\gamma_1\cdot \alpha - \alpha^d;$$
a simple computation shows that 
$$
f^2_{\lambda_1}(\alpha)= f_{\lambda_1}(\gamma_1 \alpha)=(\gamma_1^d-1)\alpha^d +\gamma_1\alpha.
$$
Since $\gamma_1^d\ne 1$, we conclude that 
$$|f^2_{\lambda_1}(\alpha)|_v= |\alpha|_v^d>|\alpha|_v=|\lambda_1|_v^{1/d}$$ 
and thus, an application  of Lemma~\ref{lem:easy}~(ii) (coupled with equation~\eqref{eq:def_2}) yields
\begin{equation}
\label{eq:5}
\hhat_{v,\lambda_1}(\alpha)= \frac{\hhat_{v,\l_1}\left(f^2_{\l_1}(\alpha)\right)}{d^2} = \frac{\log|\alpha|_v^d}{d^2}=\frac{\log|\alpha|_v}{d}.
\end{equation}
On the other hand, we compute
\begin{equation}
\label{eq:3001}
f^2_{\lambda_1}(\beta)=f_{\lambda_1}(\beta^d-\alpha^d+\gamma_1\alpha)= (\beta^d-\alpha^d+\gamma_1\alpha)^d +\gamma_1\alpha - \alpha^d.
\end{equation}

\begin{claim}
\label{claim:key_2}
With our choice for $\gamma_0$ as in Claim~\ref{claim:key} and for $\gamma_1$ as in Claim~\ref{claim:1}, we have that 
\begin{equation}
\label{eq:tricky}
\left|(\beta^d-\alpha^d+\gamma_1\alpha)^d-\alpha^d\right|_v <|\alpha^d|_v.
\end{equation}
\end{claim}

\begin{proof}[Proof of Claim~\ref{claim:key_2}.]
In order to justify \eqref{eq:tricky}, we argue similarly as in our proof of Claim~\ref{claim:key}. So, letting as before, $\pi_v$ be a uniformizer of $K=\Fpbar(t,\alpha,\beta)$ at the place $v$, then for some positive integer $e$ and some $v$-adic units $u_1,u_2$ in $K$ (and therefore in $L$), we have 
\begin{equation}
\label{eq:3000}
\beta^d-\alpha^d=\pi_v^{-e}\cdot u_1\text{ and }\alpha=\pi_v^{-e}\cdot u_2;
\end{equation}
in particular, \eqref{eq:3000} yields $|\alpha^d|_v=\left(|\pi_v|_v^{-e}\right)^d=|\pi_v|_v^{-de}$. Then we obtain: 
$$
\left|(\beta^d-\alpha^d+\gamma_1\alpha)^d-\alpha^d\right|_v= \left| \pi_v^{-de}\cdot \left(u_1+\gamma_1u_2\right)^d-\pi_v^{-de}\cdot u_2^d\right|_v= |\pi_v|_v^{-de}\cdot \left| (u_1+\gamma_1u_2)^d-u_2^d\right|_v.
$$
So, in order to get inequality~\eqref{eq:tricky}, it suffices to prove that
\begin{equation}
\label{eq:3002}
\left| (u_1+\gamma_1u_2)^d-u_2^d\right|_v<1;
\end{equation}
furthermore, using that $u_1$ and $u_2$ are $v$-adic units,  inequality~\eqref{eq:3002} is equivalent with asking that 
\begin{equation}
\label{eq:3003}
\left| \left(\frac{u_1}{u_2} + \gamma_1\right)^d-1\right|_v<1.
\end{equation}
We re-write the left-hand side in \eqref{eq:3003} as
\begin{equation}
\label{eq:3004}
\left|\left(\frac{u_1-\gamma_0u_2}{u_2}+ (\gamma_0+\gamma_1)\right)^d-1\right|_v.
\end{equation}
Equation \eqref{eq:4} (see also \eqref{eq:explanation_Fpbar}) yields that 
\begin{equation}
\label{eq:3005}
\left|\frac{u_1-\gamma_0u_2}{u_2}\right|_v<1;
\end{equation}
so, coupling equations~\eqref{eq:3005}~and~\eqref{eq:3004},  
along with the fact that $(\gamma_0+\gamma_1)^d=1$, we obtain  inequality~\eqref{eq:3003} (which, in turn, delivers the desired inequality~\eqref{eq:tricky}).

This finishes our proof of Claim~\ref{claim:key_2}.
\end{proof}

Now, inequality~\eqref{eq:tricky} yields (see also equation~\eqref{eq:3001}) that  
\begin{equation}
\label{eq:6}
\left|f^2_{\lambda_1}(\beta)\right|_v<|\alpha|_v^d.
\end{equation}
We let $M:=\max\left\{|\alpha|_v, \left|f^2_{\lambda_1}(\beta)\right|_v\right\}$; inequality~\eqref{eq:6} yields 
\begin{equation}
\label{eq:5000}
M< |\alpha|_v^d.
\end{equation} 
Using inequality~\eqref{eq:6}, we obtain
\begin{equation}
\label{eq:14}
\left|f^3_{\lambda_1}(\beta)\right|_v = \left|\left(f^2_{\lambda_1}(\beta)\right)^d + \gamma_1 \alpha - \alpha^d\right|_v\le \max\left\{ \left|f^2_{\lambda_1}(\beta)\right|_v^d, |\alpha|_v^d\right\}=M^d.
\end{equation}
An easy induction (similar to deriving inequality~\eqref{eq:14}) shows then that for each $n\ge 2$, 
\begin{equation}
\label{eq:7}
\left|f^n_{\lambda_1}(\beta)\right|_v\le M^{d^{n-2}}.
\end{equation}
Inequality \eqref{eq:7} yields that $\hhat_{v,\lambda_1}(\beta)\le \frac{\log(M)}{d^2}$; then coupling this last inequality with equations~\eqref{eq:5}~and~\eqref{eq:5000}, we obtain that
$$\hhat_{v,\lambda_1}(\beta)<\hhat_{v,\lambda_1}(\alpha),$$
which contradicts our main hypothesis~\eqref{eq:hypothesis} from Proposition~\ref{prop:main}. This concludes our proof of Lemma~\ref{lem:4}.
\end{proof}

%%%%%%%%%%%%%%%%%%%%%%%%%%%%%%%%%%%%%%%%%%%%%%%%%%%%%%%%%%%%%%%%%%%%%%%%%%%%%%%

\subsection{Third step in the proof of Proposition~\ref{prop:main}}
\label{subsec:second}

We continue our analysis for $|\beta^d-\alpha^d|_v$ (for $v\in S$) with the goal of proving the inequality from~\eqref{eq:first_step}. This time, we need to split our proof depending whether $d=2$ or $d>2$. 
\begin{lemma}
\label{lem:5}
For each $v\in S$, we must have that 
\begin{itemize}
\item[(i)] if $d>2$, then $|\beta^d-\alpha^d|_v< 1$.
\item[(ii)] if $d=2$, then $|\beta^d-\alpha^d|_v\le 1$.
\end{itemize}
\end{lemma}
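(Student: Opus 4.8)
The plan is to prove~(i) and~(ii) at once, arguing by contradiction: assume that either $d>2$ and $|\beta^d-\alpha^d|_v\ge 1$, or $d=2$ and $|\beta^d-\alpha^d|_v> 1$. Put $\delta:=\beta^d-\alpha^d$. By Lemma~\ref{lem:2} we have $|\alpha|_v=|\beta|_v>1$, and by Lemma~\ref{lem:4} we already know the key inequality $|\delta|_v<|\alpha|_v$. Write $d=sp^\ell$ with $p\nmid s$; since $d$ is not a power of $p$ we have $s\ge 2$, and when $d=2$ this forces $p\ne 2$, hence $\ell=0$ and $s=2$. The parameter we feed into hypothesis~\eqref{eq:hypothesis} is $\lambda:=\alpha-\alpha^d\in L$. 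The first observation is that $f_\lambda(\alpha)=\alpha^d+\lambda=\alpha$, so $\alpha$ is a fixed point of $f_\lambda$; hence $f^n_\lambda(\alpha)=\alpha$ for every $n$ and, straight from the definition~\eqref{eq:def}, $\hhat_{v,\lambda}(\alpha)=0$.

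Next I would trace the orbit of $\beta$ under $f_\lambda$. We have $f_\lambda(\beta)=\beta^d+\alpha-\alpha^d=\delta+\alpha$, so $|f_\lambda(\beta)|_v=|\alpha|_v$ because $|\delta|_v<|\alpha|_v$. The crux is the computation of $f^2_\lambda(\beta)=(\delta+\alpha)^d+\alpha-\alpha^d$ in characteristic $p$: using $(\delta+\alpha)^{p^\ell}=\delta^{p^\ell}+\alpha^{p^\ell}$ and the binomial theorem,
\[
(\delta+\alpha)^d=(\alpha^{p^\ell}+\delta^{p^\ell})^s=\sum_{j=0}^{s}\binom{s}{j}\,\alpha^{d-jp^\ell}\,\delta^{jp^\ell},
\]
whose $j=0$ summand $\alpha^d$ cancels the $-\alpha^d$, whose $j=1$ summand $s\,\alpha^{d-p^\ell}\delta^{p^\ell}$ has absolute value exactly $|\alpha|_v^{d-p^\ell}|\delta|_v^{p^\ell}$ (using $p\nmid s$), and whose summands with $j\ge 2$ all have strictly smaller absolute value since $|\delta|_v<|\alpha|_v$. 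Thus $f^2_\lambda(\beta)=s\,\alpha^{d-p^\ell}\delta^{p^\ell}+\alpha+(\text{terms of absolute value }<|\alpha|_v^{d-p^\ell}|\delta|_v^{p^\ell})$. Writing $M:=|\alpha|_v^{d-p^\ell}|\delta|_v^{p^\ell}$, the next step is to check that the contradiction hypothesis forces $M>|\alpha|_v$: indeed $M/|\alpha|_v=|\alpha|_v^{(s-1)p^\ell-1}|\delta|_v^{p^\ell}$, where if $d>2$ one has $(s-1)p^\ell-1\ge 1$ together with $|\delta|_v^{p^\ell}\ge 1$, while if $d=2$ one has $(s-1)p^\ell-1=0$ and $|\delta|_v>1$. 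Hence the summand $s\,\alpha^{d-p^\ell}\delta^{p^\ell}$ dominates both $\alpha$ and the error terms, so $|f^2_\lambda(\beta)|_v=M>|\alpha|_v$. Since $|\lambda|_v=|\alpha|_v^d$ and $M^d>|\alpha|_v^d=\max\{1,|\lambda|_v\}$, Lemma~\ref{lem:easy}~(ii) applied to the point $f^2_\lambda(\beta)$ and the parameter $\lambda$, combined with~\eqref{eq:def_2}, yields $\hhat_{v,\lambda}(\beta)=\hhat_{v,\lambda}(f^2_\lambda(\beta))/d^2=(\log M)/d^2>0=\hhat_{v,\lambda}(\alpha)$, which contradicts~\eqref{eq:hypothesis}. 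This proves both~(i) and~(ii).

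I expect the one delicate point to be the bookkeeping of the Frobenius--binomial expansion of $(\delta+\alpha)^d$: one must correctly isolate $s\,\alpha^{d-p^\ell}\delta^{p^\ell}$ as the dominant correction to $\alpha^d$ (nonzero precisely because $p\nmid s$), and then carry out the comparison $(s-1)p^\ell-1\ge 1$ versus $(s-1)p^\ell-1=0$ that produces the dichotomy between $d>2$ and $d=2$. This is exactly the place where the hypothesis that $d$ is not a power of $p$ (i.e.\ $s\ge 2$) is indispensable, in agreement with Remark~\ref{rem:not_p_power}; it is also the reason that for $d=2$ one obtains only $|\beta^d-\alpha^d|_v\le 1$ rather than a strict inequality.
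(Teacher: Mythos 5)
Your proof is correct and follows essentially the same route as the paper: the same choice of parameter $\lambda=\alpha-\alpha^d$ fixing $\alpha$, the same Frobenius--binomial expansion of $(\beta^d-\alpha^d+\alpha)^d$ isolating the dominant term $s\,\alpha^{(s-1)p^\ell}(\beta^d-\alpha^d)^{p^\ell}$ via $p\nmid s$, the same dichotomy $(s-1)p^\ell\ge 2$ versus $s=2,\ \ell=0$ for $d=2$, and the same application of Lemma~\ref{lem:easy}~(ii) with \eqref{eq:def_2} to contradict \eqref{eq:hypothesis}.
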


\begin{proof}[Proof of Lemma~\ref{lem:5}.]
We let $\lambda_2:=\alpha-\alpha^d$. Then clearly, $f_{\lambda_2}(\alpha)=\alpha$, which means that for any place $v$ (not just the ones from the set $S$), we have that
\begin{equation}
\label{eq:8}
\hhat_{v,\lambda_2}(\alpha)=0.
\end{equation}
From now on, we argue by contradiction and assume  that for some $v\in S$, we have 
\begin{itemize}
\item  $|\beta^d-\alpha^d|_v\ge 1$ if $d>2$. 
\item  $|\beta^d-\alpha^d|_v>1$ if $d=2$.
\end{itemize}
On the other hand, we know from Lemma~\ref{lem:4} that $|\beta^d-\alpha^d|_v<|\alpha|_v$.

Next, we write $d=p^\ell\cdot s$ for some integer $\ell\ge 0$ and some positive integer $s$ coprime with $p$. Furthermore, due to our hypothesis that $d\ne p^\ell$, then we must have that $s\ge 2$. We compute 
$$f^2_{\lambda_2}(\beta)=f_{\lambda_2}(\beta^d-\alpha^d+\alpha) =\left(\beta^d-\alpha^d+\alpha\right)^d+\alpha - \alpha^d.$$
Then we let $\epsilon:=\beta^d-\alpha^d$ and proceed as follows:
\begin{equation}
\label{eq:9}
f^2_{\lambda_2}(\beta)=(\epsilon+\alpha)^d-\alpha^d+\alpha= \left(\epsilon^{p^\ell}+\alpha^{p^\ell}\right)^s - \alpha^{sp^\ell}+\alpha.
\end{equation}
Then we expand 
$$\left(\epsilon^{p^\ell}+\alpha^{p^\ell}\right)^s= \alpha^{sp^\ell} + s\alpha^{(s-1)p^\ell}\epsilon^{p^\ell}+ \sum_{i=2}^s \binom{s}{i}\cdot \alpha^{(s-i)p^\ell}\epsilon^{ip^\ell}.$$
From our assumption that $|\beta^d-\alpha^d|_v\ge 1$ (for any $d\ge 2$) along with the conclusion of Lemma~\ref{lem:4}, we have that
\begin{equation}
\label{eq:12}
1\le |\epsilon|_v<|\alpha|_v\text{ (with the left inequality being strict if $d=2$)}.
\end{equation}
Since $p$ does not divide $s$, inequality~\eqref{eq:12} shows that 
\begin{equation}
\label{eq:11}
\left|s\alpha^{(s-1)p^\ell}\epsilon^{p^\ell}\right|_v = \left|\alpha^{(s-1)p^\ell}\epsilon^{p^\ell}\right|_v > \left|\binom{s}{i}\cdot \alpha^{(s-i)p^\ell}\epsilon^{ip^\ell}\right|_v
\end{equation}
for each $i=2,\dots,s$. Equation~\eqref{eq:11}  allows us to conclude that 
\begin{equation}
\label{eq:10}
\left|\left(\epsilon^{p^\ell}+\alpha^{p^\ell}\right)^s-\alpha^{sp^\ell}\right|_v = \left|\alpha^{(s-1)p^\ell}\epsilon^{p^\ell}\right|_v.
\end{equation}
Clearly, if $d>2$ then $(s-1)p^\ell\ge \max\{s-1, p^\ell\}\ge 2$; hence using again that $|\epsilon|_v\ge 1$ if $d>2$, we derive that 
\begin{equation}
\label{eq:16}
\left|\alpha^{(s-1)p^\ell}\epsilon^{p^\ell}\right|_v\ge |\alpha|_v^2>|\alpha|_v\text{ if }d>2.
\end{equation}
Furthermore, using our assumption that $|\epsilon|_v>1$ if $d=2$, then we also derive that
\begin{equation}
\label{eq:15}
\left|\alpha^{(s-1)p^\ell}\epsilon^{p^\ell}\right|_v= |\alpha\cdot \epsilon|_v>|\alpha|_v\text{ if }d=2,
\end{equation}
because then $p\ne 2$  and so, $\ell=0$ and $s=2$ if $d=2$. 
\begin{remark}
\label{rem:not_p_power}
We note that it is precisely in deriving inequalities \eqref{eq:16}~and~\eqref{eq:15} that we used the hypothesis that $d$ is not a power of the prime $p$, since this translates to the inequality $s\ge 2$, which is used in both of the above two inequalities (along with the fact that we cannot have $s=p=2$).
\end{remark}

Combining inequalities~\eqref{eq:16},~\eqref{eq:15},~\eqref{eq:10}~and~\eqref{eq:9} yields that
\begin{equation}
\label{eq:17}
\left|f^2_{\lambda_2}(\beta)\right|_v = \left|\alpha^{(s-1)p^\ell}\epsilon^{p^\ell}\right|_v>|\alpha|_v=|\lambda_2|_v^{1/d}.
\end{equation}
Inequality \eqref{eq:17} along with 
Lemma~\ref{lem:easy}~(ii) yields that
\begin{equation}
\label{eq:13}
\hhat_{v,\lambda_2}\left(f^2_{\lambda_2}(\beta)\right)= \log\left|\alpha^{(s-1)p^\ell}\epsilon^{p^\ell}\right|_v>0.
\end{equation}
Finally, using equations~\eqref{eq:13}~and~\eqref{eq:def_2}, we conclude that $\hhat_{v,\lambda_2}(\beta)>0$. Coupled with equation~\eqref{eq:8}, this contradicts the main hypothesis~\eqref{eq:hypothesis} that $\hhat_{v,\lambda_2}(\alpha)=\hhat_{v,\lambda_2}(\beta)$. This  concludes our proof of Lemma~\ref{lem:5}.  
\end{proof}

%%%%%%%%%%%%%%%%%%%%%%%%%%%%%%%%%%%%%%%%%%%%%%%%%%%%%%%%%%%%%%%%%%%%%%%%%%%%%%%

\subsection{Final step in the proof of Proposition~\ref{prop:main}}
\label{subsec:third}

Now we can finish our proof of Proposition~\ref{prop:main}. Once again, we split our analysis into two cases: $d>2$, respectively $d=2$.

\begin{lemma}
\label{lem:d>2}
If $d>2$, then the conclusion in Proposition~\ref{prop:main} holds.
\end{lemma}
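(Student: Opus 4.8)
The plan is to deduce $\alpha^d=\beta^d$ by combining the strict inequality from Lemma~\ref{lem:5}(i) with the product formula~\eqref{eq:product_formula} on $L$. All of the genuine work has already been done in Lemmas~\ref{lem:3}, \ref{lem:4} and \ref{lem:5}; this final step is pure bookkeeping, so I do not expect a real obstacle here — the only point one must be a little careful about is making sure the relevant product of absolute values is a \emph{finite} product and that every factor is controlled.

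First I would pin down $|\alpha^d-\beta^d|_v$ at every place $v\in\Omega$. For $v\in S$, Lemma~\ref{lem:5}(i) gives $|\alpha^d-\beta^d|_v<1$, and this is the one place where the hypothesis $d>2$ is used. For $v\in\Omega\setminus S$, the defining property~\eqref{eq:2112} of $S$ says $\max\{|\alpha|_v,|\beta|_v\}\le 1$, so the ultrametric inequality gives $|\alpha^d-\beta^d|_v\le\max\{|\alpha|_v^d,|\beta|_v^d\}\le 1$. Hence $|\alpha^d-\beta^d|_v\le 1$ for \emph{all} $v\in\Omega$, with strict inequality at every $v\in S$, and recall that $S$ is nonempty (by the standing hypothesis of Proposition~\ref{prop:main}, or Corollary~\ref{cor:easy}).

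Then I would argue by contradiction: suppose $x:=\alpha^d-\beta^d\ne 0$. By condition~(i) in the definition of a product formula field, $|x|_v=1$ for all but finitely many $v$, so $\prod_{v\in\Omega}|x|_v$ is a genuine finite product; by the estimates just obtained, every factor is $\le 1$ and each of the (at least one) factors indexed by $v\in S$ is $<1$, whence $\prod_{v\in\Omega}|x|_v<1$. This contradicts the product formula~\eqref{eq:product_formula}, which forces $\prod_{v\in\Omega}|x|_v=1$. Therefore $x=0$, i.e. $\alpha^d=\beta^d$, which is exactly condition~(3) in Proposition~\ref{prop:main}. (Equivalently, one could first invoke~\eqref{eq:finite_field} to conclude $\alpha^d-\beta^d\in\Fpbar$ from integrality at all places, and then observe that a nonzero element of $\Fpbar$ is a root of unity and hence has absolute value $1$ at every place of $\Omega$, contradicting $|\alpha^d-\beta^d|_v<1$ for $v\in S$.)
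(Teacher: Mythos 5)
Your proof is correct and is essentially identical to the paper's own argument: integrality of $\alpha^d-\beta^d$ outside $S$, the strict inequality from Lemma~\ref{lem:5}(i) at the nonempty set $S$, and the product formula~\eqref{eq:product_formula} to force $\alpha^d-\beta^d=0$. No issues.
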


\begin{proof}[Proof of Lemma~\ref{lem:d>2}.]
By definition of the set $S$ (see \eqref{eq:2112}), we have that if $|\beta^d-\alpha^d|_v>1$, then we must have that $v\in S$. On the other hand, Lemma~\ref{lem:5} yields that $|\beta^d-\alpha^d|_v<1$ if $v\in S$. Hence, $\beta^d-\alpha^d$ is integral at all places and furthermore, for the places $v\in S$ (note that $S$ is nonempty due to our assumption that not both $\alpha$ and $\beta$ are in $\Fpbar$), we have that $|\beta^d-\alpha^d|_v<1$; this contradicts the product formula~\eqref{eq:product_formula}, unless $\alpha^d-\beta^d=0$, which is precisely the desired conclusion from Proposition~\ref{prop:main}.
\end{proof}

\begin{lemma}
\label{lem:d=2}
If $d=2$, the conclusion in Proposition~\ref{prop:main} must hold.
\end{lemma}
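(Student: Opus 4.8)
The plan is to push Lemma~\ref{lem:5}(ii) to the statement that $c:=\beta^2-\alpha^2$ lies in $\Fpbar$, and then to force $c=0$ by testing the hypothesis~\eqref{eq:hypothesis} against two well-chosen parameters. For this, first observe that for every place $v\notin S$ both $\alpha$ and $\beta$ are $v$-adic integers, so $|\beta^2-\alpha^2|_v\le 1$, while for $v\in S$ this same bound is exactly Lemma~\ref{lem:5}(ii). Hence $\beta^2-\alpha^2$ is integral at every place of $\Omega$, and by~\eqref{eq:finite_field} (equivalently, by the product formula~\eqref{eq:product_formula}) we get $c:=\beta^2-\alpha^2\in\Fpbar$; in fact $c\in\Fpbar\cap L$. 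It then remains to show $c=0$. Note also that, since $d=2$ is not a power of $p$, we have $p\neq 2$, so $1\neq -1$ in $\Fpbar$.

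Next I would argue by contradiction, assuming $c\neq 0$. Since $1\neq -1$, at least one of $c\neq -1$ and $c\neq 1$ holds, and because the whole situation is symmetric under the swap $\alpha\leftrightarrow\beta$ (which turns $c$ into $-c$), it suffices to treat the case $c\neq -1$. Take $\lambda_0:=\alpha-\alpha^2$, so that $f_{\lambda_0}(\alpha)=\alpha$; then $\alpha$ is a fixed point of $f_{\lambda_0}$ and~\eqref{eq:def_2} (with $m=1$) gives $\hhat_{v,\lambda_0}(\alpha)=0$ for every $v\in\Omega$. Using $\beta^2=\alpha^2+c$, a short iteration yields
$$f_{\lambda_0}(\beta)=\alpha+c,\qquad f^2_{\lambda_0}(\beta)=(2c+1)\alpha+c^2,\qquad f^3_{\lambda_0}(\beta)=4c(c+1)\,\alpha^2+R,$$
where $R$ is a polynomial of degree $\le 1$ in $\alpha$ with coefficients in $\Fpbar$. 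Since $c\notin\{0,-1\}$ and $p\neq 2$, the leading coefficient $4c(c+1)$ is a nonzero element of $\Fpbar$; as $|\alpha|_v>1$ for $v\in S$ by Lemma~\ref{lem:2}, the quadratic term dominates and $\left|f^3_{\lambda_0}(\beta)\right|_v=|\alpha|_v^2$ for each $v\in S$.

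To conclude, note that for $v\in S$ we have $\left|f^3_{\lambda_0}(\beta)\right|_v^2=|\alpha|_v^4>|\alpha|_v^2=|\lambda_0|_v=\max\{1,|\lambda_0|_v\}$, so Lemma~\ref{lem:easy}(ii), applied to the point $f^3_{\lambda_0}(\beta)$ and the parameter $\lambda_0$, gives $\hhat_{v,\lambda_0}\!\left(f^3_{\lambda_0}(\beta)\right)=\log\left|f^3_{\lambda_0}(\beta)\right|_v>0$; then~\eqref{eq:def_2} yields $\hhat_{v,\lambda_0}(\beta)>0$. Since $S\neq\emptyset$, this contradicts~\eqref{eq:hypothesis}, and therefore $c=0$, i.e., $\alpha^2=\beta^2$, as desired.

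I expect the only point that needs care — and the reason a single parameter does not do the job here, in contrast with Lemma~\ref{lem:5} — to be the case analysis: for the exceptional value $c=-1$ the orbit of $\beta$ under $f_{\alpha-\alpha^2}$ lands on the other fixed point $1-\alpha$ of $f_{\alpha-\alpha^2}$ (equivalently, the coefficient $4c(c+1)$ vanishes there), so there is no escape and no contradiction. Bringing in the parameter $\beta-\beta^2$ as well — i.e., exploiting the $\alpha\leftrightarrow\beta$ symmetry — shifts this exceptional value to $c=1$, and $1\neq -1$ in characteristic $\neq 2$, so the two tests together exclude every $c\neq 0$; everything else reduces to the elementary nonarchimedean estimates already packaged in Lemma~\ref{lem:easy}.
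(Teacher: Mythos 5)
Your proposal is correct and follows essentially the same route as the paper: after using Lemma~\ref{lem:5}(ii) and \eqref{eq:finite_field} to place $\beta^2-\alpha^2$ in $\Fpbar$, you test the fixed-point parameter $\alpha-\alpha^2$, compute $f^3$ to isolate the leading coefficient $4c(c+1)$, apply Lemma~\ref{lem:easy}(ii) with \eqref{eq:def_2}, and dispose of the exceptional value $c=-1$ by the $\alpha\leftrightarrow\beta$ symmetry via $\beta-\beta^2$, exactly as in the paper (which phrases the same case analysis as first deriving $4\epsilon^2+4\epsilon=0$ and then excluding $\epsilon=-1$).
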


\begin{proof}
The same argument as in the proof of Lemma~\ref{lem:d>2} yields that $|\beta^2-\alpha^2|_v\le 1$ for each place $v\in S$. Since we already know that $|\beta^2-\alpha^2|_v\le 1$ for $v\notin S$, then \eqref{eq:finite_field} yields that $\epsilon=\beta^2-\alpha^2\in\Fpbar$.  

Furthermore, we note that $p\ne 2$ since we know that $d=2$ is not a power of the prime $p$ (according to the hypothesis in Proposition~\ref{prop:main}).

Again we work with $\lambda_2=\alpha-\alpha^2$ as in Lemma~\ref{lem:5}. We compute (with $\epsilon=\beta^2-\alpha^2$)
\begin{equation}
\label{eq:20}
f_{\lambda_2}(\beta)=\beta^2+\alpha-\alpha^2=\alpha+\epsilon\text{ and }f^2_{\lambda_2}(\beta)=(2\epsilon+1)\alpha +\epsilon^2
\end{equation}
and then
\begin{equation}
\label{eq:21}
f^3_{\lambda_2}(\beta)=(4\epsilon^2+4\epsilon)\alpha^2+ (4\epsilon^3+2\epsilon^2+1)\alpha+\epsilon^4.
\end{equation}
Now, if $4\epsilon^2+4\epsilon\ne 0$, then we get (note that $\epsilon\in\Fpbar$):  
\begin{equation}
\label{eq:18}
\left|f^3_{\lambda_2}(\beta)\right|_v=|\alpha|_v^2\text{ for $v\in S$.}
\end{equation}
Using inequality~\eqref{eq:18} along with Lemma~\ref{lem:easy}~(ii) yields that for $v\in S$, we would have that $\hhat_{v,\lambda_2}(f^3_{\lambda_2}(\beta))>0$ and therefore, we would also have that $\hhat_{v,\lambda_2}(\beta)>0$, which contradicts that $\hhat_{v,\lambda_2}(\alpha)=0$ because $\alpha$ is a  fixed point under the action of $f_{\lambda_2}$. So, we must have instead that 
\begin{equation}
\label{eq:19}
4\epsilon^2+4\epsilon=0.
\end{equation}
Since $p\ne 2$, then equation~\eqref{eq:19} yields either $\epsilon=0$ (which provides the desired conclusion $\alpha^2=\beta^2$), or we must have $\epsilon=-1$. However, this last possibility would  provide a contradiction since then we can run the same argument with $\alpha$ and $\beta$ reversed and we would have gotten that 
$$\alpha^2-\beta^2=-1,$$
thus contradicting $\beta^2-\alpha^2=-1$. More precisely, we could consider next $$\lambda_3:=\beta-\beta^2$$
and impose the condition that $\hhat_{v,\lambda_3}(\alpha)=0$ for each $v\in S$. Letting $\mu:=-\epsilon=\alpha^2-\beta^2$ and then arguing along the same lines as in the derivation of equations~\eqref{eq:20},~\eqref{eq:21},~\eqref{eq:18}~and~\eqref{eq:19}, we would obtain that either $\mu=0$ (which is the desired conclusion), or that $\mu=-1$. Since we cannot have that 
$$\epsilon=-1\text{ and }\mu=-\epsilon=-1$$
because the characteristic $p$ of our field is not $2$ (because $d=2$ is not a power of the characteristic), we conclude that also when $d=2$, we must have that $\alpha^2=\beta^2$.

This concludes our proof of Lemma~\ref{lem:d=2}.
\end{proof}

Combining Lemmas~\ref{lem:d>2} and \ref{lem:d=2}, we obtain the desired conclusion in Proposition~\ref{prop:main}.

%%%%%%%%%%%%%%%%%%%%%%%%%%%%%%%%%%%%%%%%%%%%%%%%%%%%%%%%%%%%%%%%%%%%%%%%%%%%%%%
%%%%%%%%%%%%%%%%%%%%%%%%%%%%%%%%%%%%%%%%%%%%%%%%%%%%%%%%%%%%%%%%%%%%%%%%%%%%%%%

\section{Proof of our main results}
\label{sec:proofs}

In this Section, we finish our proof for Theorem~\ref{thm:main}. We actually prove a more general result.

\begin{theorem}
\label{thm:main_2}
Let $L_0=\Fpbar\left(t,t^{1/p},t^{1/p^2},\cdots,t^{1/p^n},\cdots\right)$ and let $L$ be a finite extension of $L_0$. 
Let $d\ge 2$ be an integer and let $\alpha,\beta\in L$. We consider the family of polynomials 
$$f_\lambda(x):=x^d+\lambda\text{ parameterized by $\lambda\in \Lbar$.}$$ 
Then there exists an infinite sequence $\{\lambda_n\}_{n\ge 1}$ in $\Lbar$ with the property that
\begin{equation}
\label{eq:weaker_condition}
\lim_{n\to\infty}\hhat_{f_{\lambda_n}}(\alpha)=\lim_{n\to\infty} \hhat_{f_{\lambda_n}}(\beta)=0,
\end{equation}
if and only if at least one of the following statements holds:
\begin{itemize}
\item[(1)] $\alpha,\beta\in\Fpbar $.
\item[(2)] $d=p^\ell$ for some positive integer $\ell$ and $\beta-\alpha\in\Fpbar$.
\item[(3)] $\alpha^d=\beta^d$.
\end{itemize}
Moreover, if either one of the conditions~(1)-(3) holds, then for each $\lambda\in\Lbar$, we have that $\alpha$ is preperiodic under the action of $f_\lambda$ if and only if $\beta$ is preperiodic under the action of $f_\lambda$.  
\end{theorem}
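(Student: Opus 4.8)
The plan is to derive the nontrivial implication from Theorem~\ref{thm:iff} together with Proposition~\ref{prop:main}, to handle by hand the one case Proposition~\ref{prop:main} does not cover (namely $d=p^\ell$) via a direct global‑height computation, and to settle the converse and the ``moreover'' clause by elementary case‑by‑case dynamical arguments; throughout I split according to whether or not $d$ is a power of $p$. For the implication \eqref{eq:weaker_condition}$\,\Rightarrow\,$(1)$\vee$(2)$\vee$(3): given an infinite sequence $\{\lambda_n\}$ as in~\eqref{eq:weaker_condition}, Theorem~\ref{thm:iff} yields $\hhat_{v,\lambda}(\alpha)=\hhat_{v,\lambda}(\beta)$ for all $v\in\Omega$ and all $\lambda\in\Lbar$. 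If $d$ is not a power of $p$, then either $\alpha,\beta\in\Fpbar$ (case~(1)) or Proposition~\ref{prop:main} applies verbatim and forces $\alpha^d=\beta^d$ (case~(3)). It remains to treat $d=p^\ell$, where I claim $\beta-\alpha\in\Fpbar$ (case~(2)): I would take $\lambda_0:=\alpha-\alpha^{p^\ell}\in L$, so $\alpha$ is a fixed point of $f_{\lambda_0}$ and $\hhat_{v,\lambda_0}(\alpha)=0$ for every $v$; since $\lambda_0\in L$, combining Lemma~\ref{lem:global_height} with~\eqref{eq:G} gives $d\cdot\hhat_{f_{\lambda_0}}(\beta)=h_{\M_\beta}(\lambda_0)=\sum_{v\in\Omega}G_{\beta,v}(\lambda_0)=d\sum_{v\in\Omega}\hhat_{v,\lambda_0}(\beta)=0$. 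Because $d=p^\ell$ makes $f_\lambda$ additive up to a translation, $f_{\lambda_0}^n(\beta)=f_{\lambda_0}^n(\alpha)+(\beta-\alpha)^{p^{\ell n}}=\alpha+\delta^{p^{\ell n}}$ with $\delta:=\beta-\alpha$; as the Weil height is nonarchimedean, $h\bigl(f_{\lambda_0}^n(\beta)\bigr)\ge h\bigl(\delta^{p^{\ell n}}\bigr)-h(\alpha)=p^{\ell n}h(\delta)-h(\alpha)$, so dividing by $p^{\ell n}$ and letting $n\to\infty$ gives $0=\hhat_{f_{\lambda_0}}(\beta)\ge h(\delta)$, hence $h(\delta)=0$ and $\delta\in\Fpbar$ by~\eqref{eq:finite_field}.

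For the ``moreover'' clause, assume (1), (2), or (3) holds and fix $\lambda\in\Lbar$. If $\alpha^d=\beta^d$ then $f_\lambda(\alpha)=f_\lambda(\beta)$, so $\alpha,\beta$ share the same orbit from the first iterate onward and one is preperiodic iff the other is. If $\alpha,\beta\in\Fpbar$: for $\lambda\in\Fpbar$ all iterates of $\alpha$ (and of $\beta$) lie in one finite field, so both are preperiodic, while for $\lambda\notin\Fpbar$ (hence $\lambda$ transcendental over $\Fpbar$) the iterates $f_\lambda^n(\alpha)=P_{n,\alpha}(\lambda)$ are polynomials in $\lambda$ of strictly increasing degree by Lemma~\ref{lem:degree in l}, hence pairwise distinct, so neither $\alpha$ nor $\beta$ is preperiodic — in either subcase ``$\alpha$ preperiodic'' $\iff\lambda\in\Fpbar\iff$ ``$\beta$ preperiodic''. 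If $d=p^\ell$ and $\xi:=\beta-\alpha\in\Fpbar$: choosing $m\ge1$ with $\xi^{p^{\ell m}}=\xi$, Remark~\ref{rem:dynamical} gives $f_\lambda^{m}(x+\xi)=f_\lambda^{m}(x)+\xi$ and hence $f_\lambda^{mj}(\beta)=f_\lambda^{mj}(\alpha)+\xi$ for all $j$; so if $\alpha$ is preperiodic then $\{f_\lambda^{mj}(\alpha):j\ge0\}$ is finite, hence $\{f_\lambda^{mj}(\beta):j\ge0\}$ is finite, hence $\{f_\lambda^{n}(\beta):n\ge0\}$ is finite (a finite union of $f_\lambda$‑images of the previous set), i.e.\ $\beta$ is preperiodic; the reverse implication follows by symmetry.

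For the implication (1)$\vee$(2)$\vee$(3)$\,\Rightarrow\,$\eqref{eq:weaker_condition}: by the ``moreover'' clause and Remark~\ref{rem:clearly}, it is enough to produce infinitely many $\lambda$ for which $\alpha$ is preperiodic under $f_\lambda$ (then $\beta$ is too, so $\hhat_{f_\lambda}(\alpha)=\hhat_{f_\lambda}(\beta)=0$ and a constant sequence works). If $\alpha\in\Fpbar$ take $\lambda\in\Fpbar$. Otherwise $\alpha\ne0$, and I would show the set $\mathcal R$ of $\lambda$ for which the $f_\lambda$‑orbit of $\alpha$ is eventually fixed is infinite: the roots of $\Delta_n(\lambda):=P_{n+1,\alpha}(\lambda)-P_{n,\alpha}(\lambda)=P_{n,\alpha}(\lambda)^d-P_{n-1,\alpha}(\lambda)^d$ (of degree $d^n$, by Lemma~\ref{lem:degree in l}) are exactly the $\lambda$ making $f_\lambda^{n}(\alpha)$ a fixed point, and since $\Delta_{n-1}=P_{n,\alpha}-P_{n-1,\alpha}$ divides $\Delta_n$ the root sets are nested; were $\mathcal R$ finite they would stabilize, forcing the cofactor $\Delta_n/\Delta_{n-1}$ (of degree $(d-1)d^{n-1}\to\infty$) to have a fixed root $\lambda^*$ of unbounded multiplicity, but at such $\lambda^*$ the orbit of $\alpha$ is eventually a fixed point $\omega$ of $f_{\lambda^*}$ and the cofactor evaluates to $d\,\omega^{d-1}$, which (for $p\nmid d$) is nonzero unless $\omega=0$, i.e.\ $\lambda^*=0=\alpha$, a contradiction. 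When $p\mid d$ I would instead note $P_{n,\alpha}'\equiv1$, so $P_{n,\alpha}(\lambda)-\alpha$ is separable of degree $d^{n-1}$ and, with $n$ prime, produces parameters making $\alpha$ periodic of arbitrarily large exact prime period.

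The main difficulty — since Proposition~\ref{prop:main} already disposes of every $d$ that is not a power of $p$ — is the case $d=p^\ell$: in the hard direction one must upgrade the equality of \emph{all} local canonical heights to the rigid constraint $\beta-\alpha\in\Fpbar$, which is exactly what the fixed‑parameter height computation above accomplishes, and in the ``moreover'' clause it is precisely the commutation of $f_\lambda^{m}$ with translations by $\Fpbar$‑elements that is needed. A secondary but not quite trivial point is ruling out that the parameter polynomials $\Delta_n$ (or the dynatomic polynomials $P_{n,\alpha}-\alpha$) collapse onto finitely many roots with exploding multiplicity, which is why the multiplier/separability dichotomy appears in the last step.
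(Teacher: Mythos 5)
Your proposal is correct, and while it follows the paper's overall skeleton (Theorem~\ref{thm:iff} plus Proposition~\ref{prop:main} for the hard direction, a case-by-case check for the ``moreover'' clause, and an infinitude-of-preperiodic-parameters statement for the converse), two of your sub-arguments genuinely differ from the paper's. First, for the residual case $d=p^\ell$ in the hard direction, the paper produces (via Proposition~\ref{prop:infinitely} and a Galois argument) a single parameter $\l_1$ at which $\alpha$ is preperiodic, then invokes the converse direction of Remark~\ref{rem:clearly} (Benedetto's theorem for non-isotrivial polynomials) to conclude $\beta$ is also preperiodic at $\l_1$, and finally uses the cycling of $f^n_{\l_1}(\alpha)-f^n_{\l_1}(\beta)=\nu^{p^{\ell n}}$; you instead work directly at the fixed-point parameter $\lambda_0=\alpha-\alpha^{p^\ell}\in L$, use Theorem~\ref{thm:iff} and Lemma~\ref{lem:global_height} to get $\hhat_{f_{\lambda_0}}(\beta)=0$, and then the elementary estimate $h\bigl(f^n_{\lambda_0}(\beta)\bigr)=h\bigl(\alpha+\delta^{p^{\ell n}}\bigr)\ge p^{\ell n}h(\delta)-h(\alpha)$ to force $h(\delta)=0$, hence $\delta\in\Fpbar$ by \eqref{eq:finite_field}. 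This is a correct shortcut whose main benefit is that it bypasses the appeal to \cite{Rob} entirely and needs no preperiodicity statement at all in this step. Second, for the converse you replace the paper's Lemma~\ref{lem:infinitely} (the prime-period argument forcing $P_{q,\gamma}(\lambda)-\gamma=(\lambda-\lambda_0)^{d^{q-1}}$ and a contradiction from the constant term) by a dichotomy: when $p\mid d$ you use $P_{n,\alpha}'\equiv 1$, so $P_{n,\alpha}(\lambda)-\alpha$ is separable with $d^{n-1}$ distinct roots, and when $p\nmid d$ you use the nested factors $\Delta_{n-1}\mid\Delta_n$ together with the fact that at an eventually-fixed parameter the cofactor evaluates to the multiplier $d\,\omega^{d-1}\ne 0$ (using $\alpha\ne 0$ to rule out $\omega=0$); both branches are sound and yield infinitely many preperiodic parameters, at the cost of a case split the paper avoids, but with the small bonus of separability (hence exact prime periods) in the $p\mid d$ case. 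Your treatment of the ``moreover'' clause is essentially the paper's, with the translation-commutation of Remark~\ref{rem:dynamical} replacing the explicit formula \eqref{eq:25}--\eqref{eq:26}; the only cosmetic blemishes are the unnecessary mention of ``unbounded multiplicity'' (any root of the cofactor already suffices) and the remark that ``a constant sequence works,'' which is harmless since you in fact produce infinitely many distinct parameters.
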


We start by proving Theorem~\ref{thm:main} assuming that Theorem~\ref{thm:main_2} holds; this is done in Subsection~\ref{subsec:trdeg}. Then we will prove Theorem~\ref{thm:main_2} by splitting our argument into two steps in Subsections~\ref{subsec:moreover}~and~respectively,~\ref{subsec:conclusion_proof}.

%%%%%%%%%%%%%%%%%%%%%%%%%%%%%%%%%%%%%%%%%%%%%%%%%%%%%%%%%%%%%%%%%%%%%%%%%%%%%%%

\subsection{Proof of Theorem~\ref{thm:main} assuming Theorem~\ref{thm:main_2} holds}
\label{subsec:trdeg}

The next Proposition shows that in Theorem~\ref{thm:main} we may assume $\trdeg_{\Fp}L=1$. 
\begin{proposition}
\label{prop:trdeg}
Let $d\ge 2$ be an integer, let $L$ be a field of characteristic $p>0$, and let $\alpha,\beta\in L$. If there exists $\lambda_1\in\Lbar$ such that both $\alpha$ and $\beta$ are preperiodic for the polynomial $f_{\lambda_1}(x)=x^d+\lambda_1$, then $\trdeg_{\Fp}\Fp(\alpha,\beta)\le 1$.
\end{proposition}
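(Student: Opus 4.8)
The plan is to prove the contrapositive in spirit: assuming both $\alpha$ and $\beta$ are preperiodic for a single $f_{\lambda_1}$, I want to force $t := $ any transcendental element used to generate the field to be essentially redundant, i.e.\ $\alpha$ and $\beta$ generate a field of transcendence degree at most $1$ over $\Fp$. First I would observe that $\lambda_1$ must be algebraic over $\Fp(\alpha,\beta)$: indeed, $\alpha$ preperiodic for $f_{\lambda_1}$ means $f^m_{\lambda_1}(\alpha)=f^n_{\lambda_1}(\alpha)$ for some $0\le m<n$, and by Lemma~\ref{lem:degree in l} (applied with $\gamma=\alpha$, so $P_{n,\alpha}(\lambda)-P_{m,\alpha}(\lambda)$ is a nonzero polynomial in $\lambda$ with coefficients in $\Fp(\alpha)$), this is a nontrivial algebraic relation for $\lambda_1$ over $\Fp(\alpha)$. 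Hence $\Fp(\alpha,\beta,\lambda_1)$ is algebraic over $\Fp(\alpha,\beta)$, and it suffices to bound $\trdeg_{\Fp}\Fp(\alpha,\beta)$, equivalently $\trdeg_{\Fp}\Fp(\alpha,\beta,\lambda_1)$.

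Next I would work over the field $K_0 := \overline{\Fp(\lambda_1)}$ (the algebraic closure of $\Fp(\lambda_1)$ inside $\Lbar$), or really over $\Fp(\lambda_1)^{\mathrm{perf}}$ if separability is an issue, and treat $\alpha$ and $\beta$ as preperiodic points of the \emph{single} polynomial $g := f_{\lambda_1}$ defined over $K_0$. The key point is that a preperiodic point of $g$ is algebraic over the field of definition of $g$: if $f^m_{\lambda_1}(\alpha)=f^n_{\lambda_1}(\alpha)$ then $\alpha$ is a root of the nonzero one-variable polynomial $g^n(x)-g^m(x)\in K_0[x]$, so $\alpha\in\overline{K_0}=\overline{\Fp(\lambda_1)}$; likewise $\beta\in\overline{\Fp(\lambda_1)}$. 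Therefore $\Fp(\alpha,\beta,\lambda_1)\subseteq \overline{\Fp(\lambda_1)}$, which has transcendence degree $\le 1$ over $\Fp$ (it is $1$ if $\lambda_1\notin\Fpbar$ and $0$ otherwise). Combining with the first paragraph gives $\trdeg_{\Fp}\Fp(\alpha,\beta)\le\trdeg_{\Fp}\Fp(\alpha,\beta,\lambda_1)\le 1$, as desired.

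I do not anticipate a serious obstacle here; the argument is essentially a soft specialization/field-theoretic observation rather than an arithmetic one. The only mild care needed is (a) making sure the polynomial relations invoked ($P_{n,\gamma}(\lambda)-P_{m,\gamma}(\lambda)$ in the variable $\lambda$, and $g^n(x)-g^m(x)$ in the variable $x$) are genuinely \emph{nonzero}: the former is nonzero by Lemma~\ref{lem:degree in l} since $P_{n,\gamma}$ and $P_{m,\gamma}$ are monic of distinct degrees $d^{n-1}\ne d^{m-1}$ in $\lambda$, and the latter is nonzero because $g^n$ and $g^m$ are polynomials of distinct degrees $d^n\ne d^m$ in $x$; and (b) handling the degenerate possibility that $\alpha$ or $\beta$ is itself algebraic over $\Fp$, which only makes the transcendence degree smaller and causes no problem. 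One could alternatively phrase the whole argument in one stroke: the orbit-collision relations exhibit $\alpha,\beta,\lambda_1$ as points of the variety cut out by $P_{n,\alpha}(\lambda)=P_{m,\alpha}(\lambda)$ and $P_{n',\beta}(\lambda)=P_{m',\beta}(\lambda)$ in $\A^3$, and a standard dimension count on this variety (each equation is nonconstant in a different coordinate) pins the transcendence degree of the coordinate ring down to $\le 1$; but the two-step field-theoretic version above is cleaner and is what I would write.
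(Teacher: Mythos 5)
Your proposal is correct and follows essentially the same route as the paper: both proofs show that preperiodicity forces $\alpha,\beta\in\overline{\Fp(\lambda_1)}$ (you via the nonzero polynomial $g^n(x)-g^m(x)\in\Fp(\lambda_1)[x]$, the paper via the coefficient structure of $P_{n,\gamma}(\lambda)$ viewed as a relation in $\alpha$ over $\Fp(\lambda_1)$), whence $\trdeg_{\Fp}\Fp(\alpha,\beta)\le\trdeg_{\Fp}\Fp(\lambda_1)\le 1$. Your first paragraph (that $\lambda_1$ is algebraic over $\Fp(\alpha)$) is harmless but superfluous, since the second paragraph already gives the bound directly.
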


\begin{proof}
We recall the notation from Subsection~\ref{subsec:prep} (see Lemma~\ref{lem:degree in l}) that for each $\gamma\in L$, we have 
\begin{equation}
\label{eq:22}
P_{n,\gamma}(\lambda):=f^n_\lambda(\gamma),
\end{equation}
which is a monic polynomial of degree $d^{n-1}$ (in $\lambda$). Furthermore, the constant term is $P_{n,\gamma}(0)=\gamma^{d^n}$. An easy induction yields that each coefficient of $\lambda^i$ in $P_{n,\gamma}(\l)$ for $i=1,\dots, d^{n-1}-1$ is itself a polynomial \emph{in $\gamma$}, i.e., 
\begin{equation}
\label{eq:23}
P_{n,\gamma}(\lambda)=\lambda^{d^{n-1}}+ \sum_{i=1}^{d^{n-1}-1} c_{n,i}(\gamma)\cdot \lambda^i + \gamma^{d^n},
\end{equation}
with each $c_{n,i}\in \Fp[x]$ being a polynomial of degree less than $d^n$. Therefore, imposing the condition that $\alpha$ is a preperiodic point under the action of some $f_{\lambda_1}$ yields an equation of the form:
$$P_{n,\alpha}(\lambda_1)=P_{m,\alpha}(\lambda_1)\text{ for some }0\le m<n.$$
Using equation~\eqref{eq:23} (along with the information about the degrees of each corresponding polynomials $c_{m,i}$ and $c_{n,j}$), we obtain that $\alpha\in \overline{\Fp(\lambda_1)}$. A similar reasoning, using this time that $\beta$ is preperiodic under the action of $f_{\lambda_1}$, yields that also $\beta\in\overline{\Fp(\lambda_1)}$. Hence, we conclude that 
$$\trdeg_{\Fp}\Fp(\alpha,\beta)\le \trdeg_{\Fp}\overline{\Fp(\lambda_1)}\le 1,$$
as desired for the conclusion of Proposition~\ref{prop:trdeg}. 
\end{proof}

\begin{proof}[Proof of Theorem~\ref{thm:main} as a consequence of Theorem~\ref{thm:main_2}.]
First, we note that Theorem~\ref{thm:main} is left unchanged if we replace $L$ by \emph{any} field extension of $\Fpbar(\alpha,\beta)$. 

Second, Proposition~\ref{prop:trdeg} allows us to assume that $\trdeg_{\Fp}L=1$ in Theorem~\ref{thm:main}; i.e. $\alpha$ and $\beta$ live in a fixed algebraic closure of $\Fp(t)$. 

So, from now on, we take $L_0$ be the perfect closure of the rational function field in one variable over $\Fpbar$, i.e.,
\begin{equation}
\label{eq:24}
L_0=\Fpbar(t)^{\rm per}=\Fpbar\left(t, t^{\frac{1}{p}}, t^{\frac{1}{p^2}},\cdots, t^{\frac{1}{p^n}},\cdots \right)
\end{equation}
and we let $L$ be a finite extension of $L_0$ containing both $\alpha$ and $\beta$.

Now, assume that we have infinitely many $\lambda_n\in\Lbar$ with the property that both $\alpha$ and $\beta$ are preperiodic under the action of $f_{\lambda_n}$. Then (according to Remark~\ref{rem:clearly}), we have that $\hhat_{f_{\lambda_n}}(\alpha)= \hhat_{f_{\lambda_n}}(\beta)=0$ for each $n\ge 1$; hence, the direct implication from Theorem~\ref{thm:main_2} yields that at least one of the conditions~(1)-(3) are met. 

Conversely, assuming that at least one of the conditions~(1)-(3) are met, then the ``moreover'' statement from Theorem~\ref{thm:main_2} yields that for each $\lambda\in\Lbar$, $\alpha$ is preperiodic under the action of $f_\l$ if and only if $\beta$ is preperiodic under the action of $f_\l$. So, in order to prove the converse statement in Theorem~\ref{thm:main}, it suffices to establish the following fact.

\begin{proposition}
\label{prop:infinitely}
Let $L$ be an arbitrary field of characteristic $p$ and let $\gamma\in L$. Then there exist infinitely many $\lambda\in \Lbar$ such that $\gamma$ is preperiodic under the action of $f_\lambda$.
\end{proposition}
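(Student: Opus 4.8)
The plan is to exhibit, for a given $\gamma\in L$, an explicit infinite family of parameters $\lambda$ for which $\gamma$ is preperiodic under $f_\lambda(x)=x^d+\lambda$. The simplest such condition is that $\gamma$ be a \emph{fixed point} of $f_\lambda$, i.e. $f_\lambda(\gamma)=\gamma$, which forces $\lambda=\gamma-\gamma^d$; but this gives only one parameter, so I instead impose that $\gamma$ map after a bounded number of steps into an eventually fixed (or periodic) orbit. The cleanest approach: for each $n\ge 1$, ask that $f_\lambda^{\,n}(\gamma)$ be a fixed point of $f_\lambda$, which can be rephrased as requiring $f_\lambda^{\,n+1}(\gamma)=f_\lambda^{\,n}(\gamma)$. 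By Lemma~\ref{lem:degree in l}, $P_{n,\gamma}(\lambda)=f_\lambda^{\,n}(\gamma)$ is a monic polynomial in $\lambda$ of degree $d^{n-1}$, so the equation
\[
P_{n+1,\gamma}(\lambda)-P_{n,\gamma}(\lambda)=0
\]
is a nonconstant polynomial equation in $\lambda$ (of degree $d^{n}$), hence has at least one root $\lambda\in\Lbar$; any such root makes $\gamma$ preperiodic for $f_\lambda$ (with $f_\lambda^{\,n}(\gamma)$ a fixed point).

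The remaining issue is to extract \emph{infinitely many} distinct parameters from this construction. Here I would argue by contradiction: suppose only finitely many $\lambda\in\Lbar$ make $\gamma$ preperiodic for $f_\lambda$, say $\lambda\in\{\mu_1,\dots,\mu_k\}$. For each $n$, every root of $P_{n+1,\gamma}(\lambda)-P_{n,\gamma}(\lambda)$ lies in this finite set, so
\[
P_{n+1,\gamma}(\lambda)-P_{n,\gamma}(\lambda)=\prod_{j=1}^{k}(\lambda-\mu_j)^{e_{n,j}}
\]
for suitable multiplicities $e_{n,j}$ summing to $d^{n}$. This is a strong rigidity constraint; I would derive a contradiction by examining a distinguished coefficient or a specialization. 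For instance, evaluating at $\lambda=0$ gives $P_{n+1,\gamma}(0)-P_{n,\gamma}(0)=\gamma^{d^{n+1}}-\gamma^{d^{n}}$, which must equal $\pm\prod_j\mu_j^{e_{n,j}}$; comparing these across varying $n$ (and using that the $\mu_j$ are fixed while the exponents grow) pins down the possibilities severely. Alternatively, and perhaps more robustly, I would use that the parameters $\lambda$ for which $\gamma$ is preperiodic include all roots of $P_{m,\gamma}(\lambda)-P_{n,\gamma}(\lambda)$ for \emph{every} pair $0\le m<n$, and these polynomials, being monic of strictly increasing degree with controlled lower-order structure, cannot all split over a fixed finite set.

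I expect the main obstacle to be precisely this counting step: producing \emph{infinitely many} parameters rather than just one, since a naive fixed-point or single-cycle condition yields only finitely many, and one must rule out the pathological scenario in which all the relevant preperiodicity polynomials share the same finite root set. The likely clean fix is to observe that over an algebraically closed field the set of ``$n$-preperiodic parameters'' $\{\lambda : f_\lambda^{\,n+1}(\gamma)=f_\lambda^{\,n}(\gamma)\}$ is always nonempty and, because the governing polynomial $P_{n+1,\gamma}-P_{n,\gamma}$ has degree $d^n\to\infty$ with a \emph{fixed} structure of low-degree coefficients coming from the polynomial identities $c_{n,i}\in\Fp[x]$ (as in \eqref{eq:23}), these root sets cannot stabilize; a careful bookkeeping argument — or simply noting that if the root set were bounded by some $\mu_1,\dots,\mu_k$ then $P_{n+1,\gamma}-P_{n,\gamma}$ would be forced into the finite-dimensional space spanned by $\prod_j(\lambda-\mu_j)^{e_j}$, contradicting the explicit leading and subleading behavior — closes the gap. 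This is elementary but requires the right organization; everything else (existence of a root, the reduction to a polynomial equation, the verification that a root gives preperiodicity) is immediate from Lemma~\ref{lem:degree in l} and the definitions.
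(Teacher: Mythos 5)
Your reduction to a polynomial equation is fine: $P_{n+1,\gamma}-P_{n,\gamma}$ is monic of degree $d^n$ in $\lambda$ by Lemma~\ref{lem:degree in l}, it has a root in $\Lbar$, and any root makes $\gamma$ preperiodic. The genuine gap is the infinitude step, and the rigidity claims you propose to close it are false as stated. Concretely, when $d=p^\ell$ one has $f_\lambda^n(\gamma)=\gamma^{d^n}+\sum_{i=0}^{n-1}\lambda^{d^i}$, hence $P_{n+1,\gamma}(\lambda)-P_{n,\gamma}(\lambda)=\bigl(\lambda-(\gamma-\gamma^d)\bigr)^{d^n}$ for \emph{every} $n$: the root set of every consecutive-difference polynomial is the single point $\lambda_0=\gamma-\gamma^d$, even for $\gamma\notin\Fpbar$, so ``the root sets cannot stabilize'' is simply wrong; moreover your constant-term test at $\lambda=0$ gives the identity $\gamma^{d^{n+1}}-\gamma^{d^n}=(\gamma^d-\gamma)^{d^n}$, i.e.\ no contradiction can be extracted from it. The alternative argument is also not sound: polynomials of the form $\prod_j(\lambda-\mu_j)^{e_{n,j}}$ do not lie in a fixed finite-dimensional space (the exponents grow with $n$), and pure powers $(\lambda-\mu)^{d^n}$ show that a monic polynomial of large degree can perfectly well split over a one-element set, so there is no contradiction with ``leading and subleading behavior'' in general. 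Your fallback of using all relations $P_{n,\gamma}=P_{m,\gamma}$ with $m\ge 1$ is indeed what rescues the case $d=p^\ell$ (those become separable additive equations with $d^{\,n-m-1}$ distinct roots), but that is exactly the assertion that would have to be proved, and the proposal does not prove it; the counting step is a missing idea, not mere bookkeeping.

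For comparison, the paper's proof sidesteps this difficulty: it treats $\gamma\in\Fpbar$ trivially (every $\lambda\in\Fpbar$ works), and for $\gamma\notin\Fpbar$ it considers parameters for which $\gamma$ has prime period $q$. If only finitely many parameters made $\gamma$ preperiodic, then for every sufficiently large prime $q$ each root $\lambda$ of $P_{q,\gamma}(\lambda)=\gamma$ would also satisfy $f_\lambda^{q_0}(\gamma)=\gamma$ for some bounded prime $q_0\ne q$, forcing $f_\lambda(\gamma)=\gamma$ and $\lambda=\gamma-\gamma^d$; thus $P_{q,\gamma}(\lambda)-\gamma$ would be the pure power $\bigl(\lambda-(\gamma-\gamma^d)\bigr)^{d^{q-1}}$, and comparing constant terms produces an algebraic relation over $\Fp$ that forces $\gamma\in\Fpbar$, a contradiction. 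Note that the paper's argument \emph{embraces} the degenerate pure-power shape rather than trying to rule it out, and derives the contradiction from $\gamma\notin\Fpbar$; that is precisely the mechanism your sketch is missing.
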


If $L$ were $\mathbb{C}$, then the conclusion from Proposition~\ref{prop:infinitely} follows from a more general result established in \cite{Laura-2}; however, since $L$ is a field of characteristic $p$, once again we require a different proof.

\begin{proof}[Proof of Proposition~\ref{prop:infinitely}.]
If $\gamma\in\Fpbar$, then the statement is obvious because then $\gamma$ is preperiodic under $f_\lambda$ for each $\lambda\in\Fpbar$. So, from now on, we assume $\gamma\in L\setminus\Fpbar$. The desired conclusion in Proposition~\ref{prop:infinitely} follows from the next Lemma, which provides a more refined conclusion.

\begin{lemma}
\label{lem:infinitely}
Assume $\gamma\notin\Fpbar$. Then there exist infinitely many $\lambda\in\Lbar$ with the property that there exists some prime number $q$ such that $f^q_\lambda(\gamma)=\gamma$.
\end{lemma}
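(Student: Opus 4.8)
The goal is to produce infinitely many parameters $\lambda \in \Lbar$ for which $f_\lambda^q(\gamma) = \gamma$ for some prime $q$ (the prime being allowed to depend on $\lambda$). The key is the polynomial $P_{n,\gamma}(\lambda) = f_\lambda^n(\gamma)$ from \eqref{eq:P}: by Lemma~\ref{lem:degree in l} it is monic of degree $d^{n-1}$ in $\lambda$. For a fixed prime $q$, asking that $\gamma$ be periodic of period dividing $q$ under $f_\lambda$ is asking that $\lambda$ be a root of
\[
Q_q(\lambda) := P_{q,\gamma}(\lambda) - P_{0,\gamma}(\lambda) = f_\lambda^q(\gamma) - \gamma,
\]
which is a monic polynomial in $\lambda$ of degree $d^{q-1} \ge d \ge 2$; hence it has at least one root in $\Lbar$, so for \emph{each} prime $q$ there is at least one $\lambda_q \in \Lbar$ with $f_{\lambda_q}^q(\gamma) = \gamma$. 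The whole problem is therefore reduced to showing that, as $q$ ranges over the primes, one cannot have all these $\lambda_q$ coincide (or take only finitely many values) — i.e., a single $\lambda$ cannot make $\gamma$ periodic of prime period $q$ for infinitely many distinct primes $q$.

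\textbf{Ruling out a single bad parameter.} Suppose, for contradiction, that some fixed $\lambda_0 \in \Lbar$ satisfies $f_{\lambda_0}^q(\gamma) = \gamma$ for infinitely many primes $q$. Fix two such primes $q_1 < q_2$. Then $\gamma$ is periodic under $f_{\lambda_0}$ with period dividing both $q_1$ and $q_2$, hence the period divides $\gcd(q_1,q_2) = 1$, so $f_{\lambda_0}(\gamma) = \gamma$, i.e. $\gamma^d + \lambda_0 = \gamma$. That is already a strong constraint, but I must still derive a contradiction with $\gamma \notin \Fpbar$. The point is that $\gamma$ being a fixed point of $f_{\lambda_0}$ forces $\lambda_0 = \gamma - \gamma^d \in L$ (in fact in $\Fpbar(\gamma) \subseteq L$), and then I want to argue that this is incompatible with $\gamma \notin \Fpbar$ only insofar as it concerns the count of parameters: having \emph{one} such $\lambda_0$ is fine, but I need \emph{infinitely many} distinct $\lambda$. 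So the cleaner route is: for each prime $q$, let $\lambda_q$ be a root of $Q_q$; if only finitely many distinct values occurred among $\{\lambda_q\}_q$, then by pigeonhole one value $\lambda_0$ would be a root of $Q_q$ for infinitely many $q$, forcing $f_{\lambda_0}(\gamma) = \gamma$ by the $\gcd$ argument above. It then remains to show this fixed point scenario can itself only account for finitely many primes $q$ — but in fact once $f_{\lambda_0}(\gamma) = \gamma$, $\gamma$ is periodic of period $1$ for $f_{\lambda_0}$, so $f_{\lambda_0}^q(\gamma) = \gamma$ for \emph{all} $q$, which does not contradict anything by itself.

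\textbf{The genuine obstacle and its resolution.} So the real content is not about a single parameter being bad, but about whether the roots $\lambda_q$ can be forced to be new for infinitely many $q$. Here is the argument I would use: pick $\lambda_q$ to be a root of $Q_q(\lambda) = f_\lambda^q(\gamma) - \gamma$ \emph{of exact period $q$}, by instead using the ``dynatomic'' factor — the polynomial whose roots are the $\lambda$ for which $\gamma$ has period exactly $q$ (not a proper divisor, i.e. not $1$ since $q$ is prime). Concretely, $Q_q(\lambda) = (f_\lambda(\gamma) - \gamma) \cdot \Phi_q(\lambda)$ where $\Phi_q$ has degree $d^{q-1} - 1 \ge 1$; a root $\lambda_q$ of $\Phi_q$ makes $\gamma$ periodic of exact period $q$ under $f_{\lambda_q}$. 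Now if $\lambda_q = \lambda_{q'}$ for two distinct primes $q \ne q'$, then $\gamma$ would have exact period $q$ and exact period $q'$ simultaneously under the same map — impossible since the exact period is unique. Therefore the $\lambda_q$, as $q$ runs over primes, are pairwise distinct, giving infinitely many parameters; this proves Lemma~\ref{lem:infinitely} and hence Proposition~\ref{prop:infinitely}. The main obstacle, and the step I expect to need the most care, is verifying that $\Phi_q$ is genuinely nonconstant — i.e. that the factor $f_\lambda(\gamma) - \gamma$ really does divide $P_{q,\gamma}(\lambda) - \gamma$ in $\Lbar[\lambda]$ (clear, since every fixed point is $q$-periodic) and that the quotient has positive degree — which reduces to the degree count $d^{q-1} - 1 \ge 1$, valid for all $d \ge 2$ and all primes $q \ge 2$. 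One should also double-check that the roots $\lambda_q$ are not forced to lie in $\Fpbar$ in a way that collapses the count, but since there are infinitely many distinct ones this is automatic.
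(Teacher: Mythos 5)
Your setup (the monic polynomial $Q_q(\lambda)=P_{q,\gamma}(\lambda)-\gamma$ of degree $d^{q-1}$, and the observation that two distinct prime periods for the same parameter force $f_\lambda(\gamma)=\gamma$, hence $\lambda=\lambda_0:=\gamma-\gamma^d$) is sound and parallels the paper, but the step that carries all the weight is unjustified. You factor $Q_q(\lambda)=(\lambda-\lambda_0)\cdot\Phi_q(\lambda)$ and assert that a root of $\Phi_q$ gives $\gamma$ exact period $q$. That does not follow: $\Phi_q$ may vanish at $\lambda_0$ again, because $\lambda-\lambda_0$ can divide $Q_q$ with multiplicity larger than one. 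In characteristic $p$ this is precisely the danger that must be ruled out --- $Q_q$ need not be separable in $\lambda$ (when $p\mid d$ you cannot even argue via derivatives), and a priori nothing excludes the extreme collapse $Q_q(\lambda)=(\lambda-\lambda_0)^{d^{q-1}}$, in which case every root of your $\Phi_q$ equals $\lambda_0$, the ``exact period $q$'' claim fails, and your construction produces the single parameter $\lambda_0$ for every prime $q$. A telltale sign of the gap is that your argument never genuinely uses the hypothesis $\gamma\notin\Fpbar$, which must enter somewhere.

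This degenerate case is exactly what the paper's proof addresses: arguing by contradiction that the set of good parameters is finite, it shows that for all sufficiently large primes $q$ the only root of $P_{q,\gamma}(\lambda)=\gamma$ is $\lambda_0$, so that $P_{q,\gamma}(\lambda)-\gamma=(\lambda-\lambda_0)^{d^{q-1}}$, and then compares constant terms (the constant term of $P_{q,\gamma}$ being $\gamma^{d^q}$) to obtain a nontrivial polynomial identity over $\Fp$ satisfied by $\gamma$; this forces $\gamma\in\Fpbar$, contradicting the hypothesis. To repair your proof you would need an argument of this kind showing that, for infinitely many primes $q$, the polynomial $Q_q$ has a root different from $\lambda_0$ (equivalently, is not a pure power of $\lambda-\lambda_0$); once that is in place, your final observation that parameters realizing distinct exact prime periods must be pairwise distinct does correctly finish the proof.
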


\begin{proof}[Proof of Lemma~\ref{lem:infinitely}.]
We argue by contradiction and so, assume the set 
$$\mathcal{P}:=\left\{\lambda\in\Lbar\colon \text{ there exists a prime $q$ such that }f^q_\lambda(\gamma)=\gamma\right\}$$
is finite. In particular, this means that there exists a positive integer $M$ with the property that for each prime $q>M$ and for each $\lambda\in\Lbar$ such that 
\begin{equation}
\label{eq:27}
f^q_\lambda(\gamma)=\gamma, 
\end{equation}
there exists a prime $q_0<M$ (with $q_0$ depending on $\lambda$, of course) such that 
\begin{equation}
\label{eq:28}
f^{q_0}_\lambda(\gamma)=\gamma. 
\end{equation}
However, since $q$ and $q_0$ are distinct primes, then equations~\eqref{eq:27}~and~\eqref{eq:28} yield that $f_\lambda(\gamma)=\gamma$, i.e., $\lambda=\gamma-\gamma^d$. Hence, letting $P_{q,\gamma}(\lambda):=f^q_\lambda(\gamma)$ as before (see equation~\eqref{eq:22}), the only solution $\lambda\in \Lbar$ to the equation $P_{q,\gamma}(\lambda)=\gamma$ is $\lambda_0:=\gamma-\gamma^d$. Now,   using the shape of the polynomial $P_{q,\gamma}(\lambda)$ (see equation~\eqref{eq:23}), we conclude that
\begin{equation}
\label{eq:29}
P_{q,\gamma}(\lambda)=\left(\lambda-\lambda_0\right)^{d^{q-1}}.
\end{equation}
In particular, this means that the constant term in the polynomial $P_{q,\gamma}$ must be $(\gamma^d-\gamma)^{d^{q-1}}$. On the other hand, we know that the constant term in the polynomial $P_{q,\gamma}$ is $\gamma^{d^q}$; this leads to the equation
\begin{equation}
\label{eq:30}
\left(\gamma^d-\gamma\right)^{d^{q-1}}=\gamma^{d^q}.
\end{equation} 
Any solution $\gamma$ to equation~\eqref{eq:30} must live in $\Fpbar$, thus contradicting the hypotheses of Lemma~\ref{lem:infinitely}. Therefore, indeed, the set $\mathcal{P}$ must be infinite, as claimed in the conclusion of Lemma~\ref{lem:infinitely}.
\end{proof}
Lemma~\ref{lem:infinitely} shows that also when $\gamma\notin\Fpbar$, there exist infinitely many $\lambda\in\Lbar$ such that $\gamma$ is preperiodic under the action of $f_\lambda$. This concludes our proof for Proposition~\ref{prop:infinitely}.
\end{proof}
So, Proposition~\ref{prop:infinitely} shows that there exist infinitely many $\l\in\Lbar$ such that $\alpha$ is preperiodic under the action of $f_\l$ and therefore (according to the ``moreover'' claim in Theorem~\ref{thm:main_2}), also $\beta$ is preperiodic under the action of $f_\l$. Hence, this establishes the converse statement in Theorem~\ref{thm:main}.

This concludes our proof of Theorem~\ref{thm:main}, assuming that Theorem~\ref{thm:main_2} holds.
\end{proof}

%%%%%%%%%%%%%%%%%%%%%%%%%%%%%%%%%%%%%%%%%%%%%%%%%%%%%%%%%%%%%%%%%%%%%%%%%%%%%%

\subsection{Strategy for proving Theorem~\ref{thm:main_2}}  
We split our proof of Theorem~\ref{thm:main_2} into the remaining two Subsections of the current Section~\ref{sec:proofs}. In particular, we prove the ``moreover'' claim from Theorem~\ref{thm:main_2} in Subsection~\ref{subsec:moreover} and then we finish the proof of Theorem~\ref{thm:main_2} in Subsection~\ref{subsec:conclusion_proof}. 

So, from now on, we work with the notation and the assumptions from Theorem~\ref{thm:main_2}.

%%%%%%%%%%%%%%%%%%%%%%%%%%%%%%%%%%%%%%%%%%%%%%%%%%%%%%%%%%%%%%%%%%%%%%%%%%%%%%%

\subsection{Proof of the ``moreover'' claim from Theorem~\ref{thm:main_2}}
\label{subsec:moreover}

In this Subsection, we show that if either one of conditions~(1)-(3) from the conclusion of Theorem~\ref{thm:main_2} holds, then for each $\lambda\in\Lbar$, we have that $\alpha$ is preperiodic under the action of $f_\lambda$ if and only if $\beta$ is preperiodic under the action of $f_\lambda$.

We argue case by case, as follows.
\begin{itemize}
\item[(1)] For any $\gamma\in\Fpbar$, using equations~\eqref{eq:22}~and~\eqref{eq:23}, we get that for each $\lambda\in\Lbar$, we have that $\gamma$ is preperiodic under the action of $f_\lambda$ if and only if $\lambda\in\Fpbar$. Therefore, if both $\alpha$ and $\beta$ live in $\Fpbar$, we have that for each $\lambda\in\Lbar$, $\alpha$ is preperiodic under the action of $f_\lambda$ if and only if $\beta$ is preperiodic under the action of $f_\lambda$.
\item[(2)] Now, assume $d=p^\ell$ for some positive integer $\ell$. Then a simple induction on $n$ shows that for each $\gamma\in L$, we have (see equations~\eqref{eq:22}~and~\eqref{eq:23}) that
\begin{equation}
\label{eq:25}
P_{n,\gamma}(\lambda)=f^n_\lambda(\gamma)= \gamma^{p^{n\ell}} + \sum_{i=0}^{n-1} \lambda^{p^{i\ell}}.
\end{equation}
Now, assume also that $\alpha-\beta\in \Fpbar$; more precisely, we assume $\nu:=\alpha-\beta\in \mathbb{F}_{p^{\ell m}}$ for some positive integer $m$. Then equation~\eqref{eq:25} yields that for each $n\ge 1$, we have
\begin{equation}
\label{eq:26}
f^n_\lambda(\alpha)-f^n_\lambda(\beta) =\nu^{p^{n\ell}}; 
\end{equation}  
moreover, the elements $\left\{\nu^{p^{n\ell}}\right\}_{n\ge 1}$ cycle among the values 
$$\nu,\nu^{p^\ell},\nu^{p^{2\ell}},\cdots, \nu^{p^{(m-1)\ell}}\text{ (since $\nu^{p^{m\ell}}=\nu$).}$$
Therefore, $\alpha$ is preperiodic under the action of $f_\lambda$ if and only if $\beta$ is preperiodic under the action of $f_\lambda$, as desired in the ``moreover'' claim from the conclusion of Theorem~\ref{thm:main_2}.
\item[(3)] Finally, if $\alpha^d=\beta^d$, we note that for each $\lambda\in\Lbar$, we have that $f_\lambda(\alpha)=f_\lambda(\beta)$ and therefore, $\alpha$ is preperiodic under the action of $f_\lambda$ if and only if $\beta$ is preperiodic under the action of $f_\lambda$. 
\end{itemize}

This concludes our proof that whenever one of the conditions~(1)-(3) from the conclusion of Theorem~\ref{thm:main_2} holds, then for each $\lambda\in\Lbar$, we have that $\alpha$ is preperiodic for $f_\lambda$ if and only if $\beta$ is preperiodic under the action of $f_\lambda$. Furthermore, according to Proposition~\ref{prop:infinitely}, we know there exist infinitely many $\l\in\Lbar$ such that $\alpha$ (and therefore, also $\beta$) is preperiodic under the action of $f_\l$. Therefore,  either one of the conditions~(1)-(3) from Theorem~\ref{thm:main_2} yields the existence of infinitely many $\l_n\in\Lbar$ such that both $\alpha$ and $\beta$ are preperiodic under the action of $f_{\l_n}$. Clearly (see also Remark~\ref{rem:clearly}), for each such $\l_n\in\Lbar$, we have 
\begin{equation}
\label{eq:201}
\hhat_{f_{\l_n}}(\alpha)=\hhat_{f_{\l_n}}(\beta)=0.
\end{equation}
In particular, equation~\eqref{eq:201} allows us to establish the converse statement in Theorem~\ref{thm:main_2}. So, it remains to prove the direct implication from the statement of Theorem~\ref{thm:main_2}; this is done in Subsection~\ref{subsec:conclusion_proof}.

%%%%%%%%%%%%%%%%%%%%%%%%%%%%%%%%%%%%%%%%%%%%%%%%%%%%%%%%%%%%%%%%%%%%%%%%%%%%%%%

\subsection{Conclusion of the proof of Theorem~\ref{thm:main_2}}
\label{subsec:conclusion_proof}

In this Subsection, we prove the last remaining statement from Theorem~\ref{thm:main_2}, i.e., that if there exist infinitely many $\lambda_n\in\Lbar$ such that $\lim_{n\to\infty}\hhat_{f_{\l_n}}(\alpha) = \lim_{n\to\infty}\hhat_{f_{\l_n}}(\beta)=0$, then at least one of the conditions~(1)-(3) from the conclusion of Theorem~\ref{thm:main_2} must hold.

Theorem~\ref{thm:iff} shows that condition~\eqref{eq:weaker_condition} yields that for each $\lambda\in\Lbar$ and for each place $v\in\Omega=\Omega_L$, we have that 
\begin{equation}
\label{eq:6001}
\hhat_{v,\lambda}(\alpha)=\hhat_{v,\lambda}(\beta),
\end{equation} 
i.e., hypothesis~\eqref{eq:hypothesis} from Proposition~\ref{prop:main} is met.  Then Proposition~\ref{prop:main} yields that 
\begin{itemize}
\item[(1)] either both $\alpha$ and $\beta$ live in $\Fpbar$; 
\item[(2)] or $d=p^\ell$ for some positive integer $\ell$; 
\item[(3)] or $\alpha^d=\beta^d$. 
\end{itemize}
So, it remains to prove that when $d=p^\ell$ then we \emph{must} have that also $\alpha-\beta\in\Fpbar$. Indeed, we will see that the existence of a \emph{single}  $\l_1\in\Lbar$ such that both $\alpha$ and $\beta$ are preperiodic under the action of 
\begin{equation}
\label{eq:6003}
f_{\l_1}(x)=x^d+\l_1=x^{p^\ell}+\l_1
\end{equation}
would give that $\alpha-\beta\in\Fpbar$. Now, the existence of such $\l_1\in\Lbar$ comes from the fact that once $\alpha$ is preperiodic under the action of some $f_{\l_1}$ (and there are infinitely many such parameters $\l_1\in\Lbar$ according to Proposition~\ref{prop:infinitely}), then actually $\alpha$ is preperiodic under the action of $f_{\sigma(\l_1)}$ for any $\sigma\in\gal(L^{\rm sep}/L)$ (since $\sigma(f^n_{\l_1}(\alpha))=f^n_{\sigma(\l_1)}(\alpha)$ for any $n$, because $\alpha\in L$ is fixed by $\sigma$). But then (see Remark~\ref{rem:clearly}), we have
\begin{equation}
\label{eq:6000}
\hhat_{v,\sigma(\l_1)}(\alpha)=0\text{ for each }v\in\Omega\text{ and for each }\sigma\in\gal(L^{\rm sep}/L).
\end{equation}
Equations~\eqref{eq:6000} and \eqref{eq:6001} yield that $\hhat_{v,\sigma(\l_1)}(\beta)=0$ for each $v\in\Omega$ and for each $\sigma\in\gal(L^{\rm sep}/L)$. Using equations~\eqref{def ade hig},~\eqref{eq:G}~and~\eqref{eq:height_adelic} along with Lemma~\ref{lem:global_height}, we conclude that $\hhat_{f_{\l_1}}(\beta)=0$. But then, as noted in Remark~\ref{rem:clearly}, also the converse holds: we must have that $\beta$ must be preperiodic under the action of $f_{\l_1}$.

So, knowing that both $\alpha$ and $\beta$ are preperiodic under the action of $f_{\l_1}$,  we argue as in Subsection~\ref{subsec:moreover} (see equations~\eqref{eq:25},~\eqref{eq:26}~and~\eqref{eq:6003}) and writing $\nu=\alpha-\beta$, then we get that the sequence $\left\{\nu^{p^{\ell n}}\right\}_{n\ge 1}$ must consist of finitely many distinct elements (because its elements are the differences of the elements in the orbits of $\alpha$ and respectively, of  $\beta$ under the action of $f_{\l_1}$). This can only happen if $\nu\in\Fpbar$, as desired. 

This concludes our proof of Theorem~\ref{thm:main_2}.

%%%%%%%%%%%%%%%%%%%%%%%%%%%%%%%%%%%%%%%%%%%%%%%%%%%%%%%%%%%%%%%%%%%%%%%%%%%%%%%
%%%%%%%%%%%%%%%%%%%%%%%%%%%%%%%%%%%%%%%%%%%%%%%%%%%%%%%%%%%%%%%%%%%%%%%%%%%%%%%

\end{document}